\documentclass[12pt]{article}
\usepackage{amsmath}
\usepackage{amsthm}
\usepackage{enumerate}
\usepackage{amssymb}

\usepackage{verbatim}
\usepackage{url}
\usepackage[latin1]{inputenc}

\usepackage{caption}
\usepackage{graphicx,color}
\setlength{\captionmargin}{8mm}
\setlength{\textwidth}{7.5in}
\setlength{\textheight}{8.3in}
\setlength{\topmargin}{0pt}
\setlength{\headsep}{30pt}
\setlength{\headheight}{0pt}
\setlength{\oddsidemargin}{0pt}
\setlength{\evensidemargin}{0pt}

\addtolength{\oddsidemargin}{-.425in}

\DeclareMathOperator{\Var}{Var}

\def\be{\begin{equation}}
\def\ee{\end{equation}}
\def\bea{\begin{equation*}}
\def\eea{\end{equation*}}
\def\begs{\begin{split}}
\def\ends{\end{split}}

\newtheorem{thm}{Theorem}[section]
\newtheorem{lma}[thm]{Lemma}

\newtheorem{prop}[thm]{Proposition}

\newtheorem{df}[thm]{Definition}

\theoremstyle{remark}
\newtheorem{preremark}[thm]{Remark}
\newtheorem{preex}[thm]{Example}

\newenvironment{remark}{\begin{preremark}}{\qed\end{preremark}}

\numberwithin{equation}{section}

\title{Sublinear variance in first-passage percolation for general distributions}
\author{Michael Damron\thanks{The research of M. D. is supported by NSF grant DMS-0901534.} \and Jack Hanson\thanks{The research of J. H. is supported by an NSF graduate research fellowship.} \and Philippe Sosoe\thanks{The research of P. S. is supported by an NSERC postgraduate fellowship.}}
\begin{document}

\maketitle

\abstract{We prove that the variance of the passage time from the origin to a point $x$ in first-passage percolation on $\mathbb{Z}^d$ is sublinear in the distance to $x$ when $d\geq 2$, obeying the bound $C\|x\|/\log \|x\|$, under minimal assumptions on the edge-weight distribution. The proof applies equally to absolutely continuous, discrete and singular continuous distributions and mixtures thereof, and requires only $2+ \log $ moments. The main result extends work of Benjamini-Kalai-Schramm \cite{BKS} and Benaim-Rossignol \cite{benaimrossignol}.}


\section{Introduction}

\subsection{Background}
In addition to its rich stochastic geometric structure, first-passage percolation on $\mathbb{Z}^d$ provides a model for the study of fluctuations of a non-linear function of a large number of independent random variables. For recent surveys, see \cite{BStahn, GK, Howard}

In this paper, we are concerned with the variance of the passage time $\tau(0,x)$ from $0$ to $x\in \mathbb{Z}^d$. The passage time is the random variable defined as
\begin{equation}\label{eq: tau_def}
\tau(0,x) =\inf_{\gamma: 0\rightarrow x}\sum_{e\in \gamma} t_e\ ,
\end{equation}
where the infimum is taken over all lattices paths $\gamma=(v_0=0,e_0,v_1,\ldots,e_N,v_N=x)$ joining $0$ to $x$. The collection $(t_e)_{e\in \mathcal{E}^d}$ consists of nonnegative independent random variables with common distribution $\mu$ and $\mathcal{E}^d$ is the set of nearest-neighbor edges.

When $d=1$, \eqref{eq: tau_def} is simply a sum over i.i.d. random variables for each $x$, and the variance of $\tau(0,x)$ is of order $\|x\|_1$. In contrast, when $d\ge 2$, \eqref{eq: tau_def} is a mininum over a collection of correlated sums of i.i.d. random variables. This correlation structure has led physicists to conjecture a sublinear scaling of the form $\|x\|_1^\alpha$, $\alpha<1$ for the fluctuations. In the case $d=2$, the model is expected to have \emph{KPZ scaling} \cite{kpz}, with $\alpha=\frac{2}{3}$, and the recentered passage time approximately follows the Tracy-Widom distribution. Except for K. Johansson's work \cite{johansson} on a related exactly solvable model, there has been little success in rigorously confirming these predictions.

In \cite{kestenspeed}, H. Kesten showed that the variance of $\tau(0,x)$ is at most linear in the distance of $x$ to the origin:
\[\Var \tau(0,x)\le C\|x\|_1,\]
for some constant $C$. Kesten also showed that if $\mu$ has exponential moments:
\begin{equation}
\int e^{\delta x}\,\mu(\mathrm{d}x)<\infty \text{ for some } \delta>0\ ,\label{eqn: exp-moments}
\end{equation}
then the passage time is exponentially concentrated around its mean:
\begin{equation}\label{eqn: kesten}
\mathbb{P}(|\tau(0,x)-\mathbb{E}\tau(0,x)|\ge \lambda \sqrt{\|x\|_1})\le Ce^{-c\lambda},
\end{equation}
for $\lambda \le C\|x\|_1$.
M. Talagrand improved this result to Gausssian concentration on the scale $\sqrt{\|x\|_1}$: see \cite[Proposition~8.3]{talagrand}. These results have been used to derive concentration of the mean of the passage time around the ``time constant.'' Some relevant papers include \cite{alexander, rhee, zhang}. In the other direction, lower bounds have been given for the variance of the passage time, but the strongest results are dimension-dependent; see \cite{AD11, kestenspeed, newman, zhang2}.

In a remarkable paper \cite{BKS}, I. Benjamini, G. Kalai, and O. Schramm used an inequality due to Talagrand \cite{talagrand-russo} to prove that if the edge-weight distribution is uniform on a set of two positive values, the variance is sublinear in the distance:
\[\Var \tau(0,x) \le C(a,b)\frac{\|x\|_1}{\log\|x\|_1},~ d \geq 2\]
for $0<a<b$ and $\mathbb{P}(t_e=a)=\mathbb{P}(t_e = b) =\frac{1}{2}.$
M. Benaim and R. Rossignol \cite{benaimrossignol} introduced their ``modified Poincar\'e inequality,'' itself based on an inequality of D. Falik and A. Samorodnitsky (a corresponding inequality appears in Rossignol \cite[Equations~(11)-(14)]{Rossignol}), to extend the variance estimate to a class of continuous distributions which they termed ``nearly gamma.'' Nearly gamma distributions satisfy an entropy bound analogous to the logarithmic Sobolev inequality for the gamma distribution, which explains their name; for a nearly gamma $\mu$ and, for simplicity, $f$ smooth,
\begin{equation}\label{eqn: nearlygamma}
Ent_\mu f^2 := \int f^2(x) \log \frac{f^2(x)}{\mathbb{E}_\mu f^2}\,\mu(\mathrm{d}x) \le C\int \left(\sqrt{x}f'(x)\right)^2\,\mu(\mathrm{d}x).
\end{equation}
Benaim and Rossignol also show exponential concentration at scale $\sqrt{\|x\|_1/\log \|x\|_1}$ for nearly gamma distributions with exponential moments: if $\mu$ satisfies \eqref{eqn: nearlygamma} and \eqref{eqn: exp-moments},
then
\begin{equation}\label{eqn: concentration}
 \mathbb{P}_\mu(|\tau(0,x)-\mathbb{E}_\mu \tau(0,x)| \ge \lambda \sqrt{\|x\|_1/\log \|x\|_1}) \le Ce^{-c\lambda}.
\end{equation}

The nearly gamma condition excludes many natural distributions, including all power law distributions and distributions with infinite support which decay too quickly, mixtures of continuous and discrete distributions, singular continuous distributions, and continuous distributions with disconnected support, or whose density has zeros on its support.

\subsection{Main result}

The purpose of the present work is to extend the sublinear variance results mentioned above to general distributions with $2+\log$ moments. We make two assumptions:
\begin{equation}\label{eqn: 2log}
\int x^2(\log x)_+~\mu(\text{d}x) < \infty\ ,
\end{equation}
\begin{equation}\label{eqn: geodesics}
\mu(\{0\})<p_c(d)\ ,
\end{equation}
where $p_c(d)$ is the critical parameter for bond percolation on $\mathbb{Z}^d$. 

Our main result is the following:
\begin{thm}\label{thm: sub linear} Let $\mu$ be a Borel probability measure supported on $[0,\infty)$ satisfying \eqref{eqn: 2log} and \eqref{eqn: geodesics}. In i.i.d. first-passage percolation on $(\mathbb{Z}^d,\mathcal{E}^d)$, $d\ge 2$, with edge-weight distribution $\mu$, there exists a constant $C=C(\mu,d)$ such that 
\[
\Var \tau(0,x) \le C\frac{\|x\|_1}{\log \|x\|_1} \text{ for all } x \in \mathbb{Z}^d\ .
\]
\end{thm}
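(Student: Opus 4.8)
The plan is to follow the entropy/martingale strategy of Benaim--Rossignol and Benjamini--Kalai--Schramm, but to replace the nearly-gamma logarithmic Sobolev input with a more robust averaging argument that works for an arbitrary $\mu$ on $[0,\infty)$. First I would set up the modified Poincar\'e (Falik--Samorodnitsky) inequality: writing $\tau = \tau(0,x)$ and letting $\Ec_1,\ldots,\Ec_m$ be a filtration generated by revealing the edge weights one at a time (in some fixed order, with $m$ edges relevant to a suitable box), one has the martingale decomposition $\tau - \E\tau = \sum_i V_i$ with $V_i = \E[\tau \mid \Ec_i] - \E[\tau \mid \Ec_{i-1}]$, and the inequality
\[
\Var \tau \;\log\!\left(\frac{\Var\tau}{\sum_i \|V_i\|_1^2}\right) \;\le\; C\sum_i \Ent(V_i^2)\ .
\]
So the game is to (a) bound $\sum_i \Ent(V_i^2)$ above by something of order $\|x\|_1$ (up to the right constants), and (b) bound $\sum_i \|V_i\|_1^2$ above by something much smaller, so that the logarithm is of order $\log\|x\|_1$ and can be divided out.

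For step (b), the standard route is a randomization-of-the-starting-point argument: average $x$ over a sublinear-radius box (radius $\asymp \log\|x\|_1$ or a small power), use that $\Var$ changes negligibly under this averaging (controlled by the already-known linear Kesten bound $\Var\tau \le C\|x\|_1$ and $\E|\tau(0,x)-\tau(0,y)| \le C\|x-y\|_1$), and note that after averaging each individual edge influences $\tau$ only through a geodesic, whose passage through a given edge has small probability — this makes $\sum_i\|V_i\|_1^2$ of order $\|x\|_1/(\log\|x\|_1)$ or smaller. Here assumption \eqref{eqn: geodesics}, $\mu(\{0\})<p_c(d)$, is what guarantees geodesics exist and are not too long, so the relevant box has $m \asymp \|x\|_1^{O(1)}$ edges and each edge-usage probability is genuinely small; the $2+\log$ moment assumption \eqref{eqn: 2log} enters to control the contribution of atypically large weights to these $L^1$ norms and to the entropy terms.

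The crux — and the step I expect to be the main obstacle — is bounding $\sum_i \Ent(V_i^2)$ for a \emph{general} $\mu$ without any Sobolev-type inequality. The key idea I would use is to not differentiate in the weight (which is what forces smoothness / nearly-gamma) but instead to \emph{tensorize over an independent resampling}: introduce an independent copy $t_e'$ of each weight, and bound $\Ent(V_i^2)$ by an expression of the form $\E\big[(\tau - \tau^{(i)})^2 \mathbf{1}_{\{\text{edge } e_i \text{ pivotal-ish}\}}\big]$, where $\tau^{(i)}$ is $\tau$ with the $i$-th weight resampled. This is a discrete, distribution-free surrogate for the right side of \eqref{eqn: nearlygamma}. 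Summing over $i$, each such term is controlled by the expected number of edges whose resampling changes $\tau$, weighted by the squared weight-difference; using that $\tau$ is $1$-Lipschitz in each weight (in the appropriate one-sided sense) together with the geodesic length bound, the sum telescopes to something of order $\E[t_e^2(\log t_e)_+ + 1]\cdot\|x\|_1$, which is finite precisely by \eqref{eqn: 2log}. Making the ``pivotal-ish'' event and the entropy comparison precise — in particular handling atoms of $\mu$, where the naive derivative is meaningless, and continuous-singular parts simultaneously — is where the real work lies; the rest is bookkeeping combining (a), (b), and the Falik--Samorodnitsky inequality, then absorbing constants and taking $\|x\|_1$ large (the small-$\|x\|_1$ cases being trivial by adjusting $C$).
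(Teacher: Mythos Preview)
Your overall architecture is right --- Falik--Samorodnitsky plus the BKS averaging plus an entropy bound of order $\|x\|_1$ --- and steps (a) framework and (b) are essentially what the paper does (the paper averages over a box of side $\|x\|_1^{1/4}$ rather than $\log\|x\|_1$, but that is cosmetic). The genuine gap is in your proposed mechanism for bounding $\sum_i Ent(V_i^2)$.

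Resampling an edge weight by an independent copy $t_e'$ produces Efron--Stein type quantities $\mathbb{E}[(\tau-\tau^{(i)})^2]$, and those control \emph{variance}, not entropy. There is no distribution-free inequality of the form $Ent_{\mu}(f^2)\le C\,\mathbb{E}_{\mu\otimes\mu}[(f(X)-f(X'))^2]$; such a bound is exactly a log-Sobolev inequality for $\mu$, and most measures (heavy tails, atoms off the median, gaps in the support, etc.) fail it with any fixed constant. So ``tensorize over an independent resampling'' does not give you an entropy bound, and your sentence ``this is a discrete, distribution-free surrogate for the right side of \eqref{eqn: nearlygamma}'' is precisely the step that does not go through. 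Relatedly, there is no visible source in your argument for the $(\log t_e)_+$ factor you invoke at the end --- resampling yields second moments of $t_e$, not $t_e^2\log t_e$.

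What the paper does instead is encode each edge weight as the image, under the right-continuous inverse of $F$, of a uniform on $[0,1]$ written in binary: $t_e=T(\omega_{e,1},\omega_{e,2},\ldots)$ with $\omega_{e,j}$ i.i.d.\ Bernoulli$(1/2)$. On this enlarged Bernoulli product space one \emph{does} have a universal LSI --- the Bonami--Gross two-point inequality --- and tensorization then gives
\[
\sum_k Ent(V_k^2)\;\le\;\sum_e\sum_{j\ge1}\mathbb{E}(\Delta_{e,j}\tau)^2,
\]
where $\Delta_{e,j}$ flips the $j$-th bit of $t_e$. The real work is bounding these bitwise discrete derivatives: one shows (Proposition~\ref{prop: intermediate}) that
\[
\sum_e\sum_{j\ge1}\mathbb{E}(\Delta_{e,j}\tau)^2\;\le\;C\,\mathbb{E}\sum_e\bigl(1-\log F(t_e)\bigr)\mathbf{1}_{\{e\in Geo(0,x)\}},
\]
and it is here, via the variational characterization of entropy applied to the tail bits, that the $2+\log$ moment \eqref{eqn: 2log} is actually used. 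The $-\log F(t_e)$ factor is singular as $t_e\downarrow I$, and controlling it requires a further ingredient you do not mention: a greedy-lattice-animal estimate (Theorem~\ref{thm: low_density}) showing that the expected number of geodesic edges with weight in any Borel set $B$ is at most $C\|x\|_1\,\mu(B)^{c}$, which after a dyadic decomposition makes the sum finite. Without the Bernoulli encoding and this lattice-animal step, the entropy bound for general $\mu$ is not available.
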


\begin{remark}
When \eqref{eqn: geodesics} fails, the passage time is known to be bounded by $C\|x\|_1^\epsilon$ for any $\epsilon$. See \cite{chayes,zhang3} for more details.
\end{remark}

\begin{remark}
The moment condition $\mathbb{E}t_e^2(\log t_e)_+<\infty$ may be able to be weakened, perhaps as low as $\mathbb{E}t_e^{(2/d)+a}<\infty$ for some $a>0$ by tensorizing entropy over small blocks, as in \cite[Lemma~2.6]{DK}. The main reason is that, due to \cite[Lemma~3.1]{coxdurrett}, $\Var \tau(x,y) < \infty$ for all $x,y$ under the condition $\mathbb{E} t_e^{(1/d)+a} < \infty$ for some $a>0$.
\end{remark}

Our method of proof may be of independent interest. Following \cite{benaimrossignol}, we use a martingale difference decomposition and the inequality of Falik and Samorodnitsky to control the variance of an averaged version of $\tau(0,x)$ by the entropy times a $1/\log \|x\|_1$ factor. Instead of representing the measure $\mu$ as the pushfoward of a Gaussian by an invertible transformation and using the Gaussian logarithmic Sobolev inequality, we represent $\mu$ as the image of an infinite sequence of uniform Bernoulli variables, and use A. Bonami and L. Gross's two-point entropy inequality \cite{bonami, gross} (the ``discrete log-Sobolev inequality'') to control the entropy. A central part of the argument is then to estimate the discrete derivatives of $\tau(0,x)$ with respect to variations of the Bernoulli variables.

\subsection{Outline of the paper}

The plan of the paper is as follows: in Section~\ref{sec: entropy}, we review some basic properties of the entropy functional with respect to a probability measure, and present the inequality of Falik and Samorodnitsky which we will use. In Section~\ref{sec: bernoulli}, we apply this inequality to first-passage percolation, using the martingale decomposition introduced in \cite{benaimrossignol}. We then briefly explain Benaim and Rossignol's approach based on the Gaussian log-Sobolev inequality in Section~\ref{sec: BR}, and show that a modification of their method using positive association already allows one to deal with a larger class of continuous distributions than the ones handled in \cite{benaimrossignol}. The purpose of Section~\ref{sec: BR} is only to clarify the role of conditions appearing in \cite{benaimrossignol}. This section is independent of the derivation of our main result.

In Section~\ref{sec: lower}, we provide a lower bound for the quantity $\sum_{k=1}^\infty (\mathbb{E} |V_k|)^2$ appearing in the variance bound, which will give the logarithmic factor in the final inequality. Next, in Section~\ref{sec: derivatives} we represent the passage time variables through a Bernoulli encoding and, after applying Bonami's inequality, bound a sum of discrete derivatives with the help of estimates on greedy lattice animals.


\subsection{Notation and preliminary results}

We will work on the space $\Omega = [0,\infty)^{\mathcal{E}^d}$ and let $\mu$ be a Borel probability measure on $[0,\infty)$. The product measure $\prod_{e \in \mathcal{E}^d} \mu$ will be denoted by $\mathbb{P}$. A realization of passage times (edge-weights) $\omega \in \Omega$ will be written as $\omega = (t_e)$ with point-to-point passage time $\tau(x,y)$ given by \eqref{eq: tau_def}. Throughout the paper, the letter $I$ will refer to the infimum of the support of $\mu$: writing 
\begin{equation}\label{eq: F_def}
F(x) = \mu((-\infty,x])
\end{equation}
for the distribution function of $\mu$, set
\begin{equation}\label{eq: I_def}
I = \inf\{x : F(x) > 0\}\ .
\end{equation}

A fundamental object in first-passage percolation is a geodesic, and we spend some time here giving some basic properties of geodesics. Any path $\gamma$ from $x$ to $y$ with passage time $\tau(\gamma) = \sum_{e \in \gamma} t_e$ satisfying $\tau(\gamma) = \tau(x,y)$ will be called a geodesic from $x$ to $y$. From the shape theorem of Cox-Durrett \cite{coxdurrett} and the fact that under \eqref{eqn: geodesics}, the limiting shape for the model is bounded \cite[Theorem~6.1]{kesten}, assumptions \eqref{eqn: 2log} and \eqref{eqn: geodesics} ensure the existence of geodesics:
\begin{equation}\label{eq: exist_geodesics}
\mathbb{P}( \text{for all }x,y \in \mathbb{Z}^d \text{ there exists a geodesic from }x \text{ to } y) = 1\ .
\end{equation}
There is almost surely a unique geodesic between $x$ and $y$ if and only if $\mu$ is continuous, so this need not be true in general. For any $x,y \in \mathbb{Z}^d$ we then use the notation
\begin{equation}\label{eq: geo_def}
Geo(x,y) = \{e \in \mathcal{E}^d : e \in \gamma \text{ for all geodesics } \gamma \text{ from } x \text{ to } y\}\ .
\end{equation}

Central to the current proofs of variance bounds for the passage time are estimates on the length of geodesics. The key theorem is due to Kesten \cite[(2.25)]{kestenspeed} and is listed below. We will need to derive two generalizations of this result. The first is Lemma~\ref{lem: exp_intersections} and concerns the number of intersections of $Geo(0,x)$ with arbitrary edge sets. The second, Theorem~\ref{thm: low_density}, gives a bound on the number of edges of $Geo(0,x)$ whose weight lies in a given Borel set.
\begin{thm}[Kesten]\label{thm: kesten_length}
Assume $\mathbb{E}t_e<\infty$ and \eqref{eqn: geodesics}. There exists $\mathbf{C}_1$ such that for all $x$,
\[
\mathbb{E}\#Geo(0,x) \leq \mathbf{C}_1\|x\|_1\ .
\]
\end{thm}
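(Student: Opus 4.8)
The statement we are asked to prove is Kesten's classical estimate: under a first-moment assumption and $\mu(\{0\}) < p_c(d)$, the expected number of edges in the union of all geodesics from $0$ to $x$ is at most $\mathbf{C}_1 \|x\|_1$. Let me think about how I would attack this.

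The key point is that if $\mu(\{0\}) < p_c(d)$, then zero-weight edges are subcritical, so they cannot form large connected clusters. A geodesic is free to wander through a zero-cluster at no cost, but it cannot do so for long because zero-clusters are small. More quantitatively, the plan is to relate the length of a geodesic to its passage time. Let $N(0,x)$ denote the maximal number of edges in any geodesic from $0$ to $x$. The strategy: partition the edges of a geodesic $\gamma$ into those with weight $0$ and those with weight $> 0$; along $\gamma$, consecutive runs of zero-weight edges lie inside a single cluster of the subcritical percolation configuration $\{e : t_e = 0\}$, hence such runs are short on average; meanwhile the number of positive-weight edges on $\gamma$ is controlled because each contributes a definite amount to $\tau(0,x)$, and $\tau(0,x)$ itself is controlled linearly by the shape theorem / subadditivity. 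Making this rigorous requires a renormalization or stochastic-domination argument.

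Concretely, I would proceed as follows. First, fix $\delta > 0$ small enough that $\mu(\{0\}) < \mu([0,\delta]) < p_c(d)$ still holds (possible since $F$ is right-continuous and $\mu(\{0\}) < p_c(d)$); declare an edge $e$ \emph{open} if $t_e \le \delta$. Then $\{$open edges$\}$ is a subcritical Bernoulli percolation, so the cluster $C_v$ of the origin of any vertex $v$ has an exponential tail: $\mathbb{P}(\#C_v \ge n) \le e^{-cn}$. Second, observe that a geodesic $\gamma$ from $0$ to $x$ visits a sequence of open-clusters connected by closed edges; each closed edge it crosses has weight $> \delta$, so the number of closed edges on $\gamma$ is at most $\tau(0,x)/\delta$. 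Third, bound the number of open edges: within each open cluster the geodesic is a self-avoiding path, so it uses at most $\#C$ edges of that cluster, where $C$ is the cluster. Summing, $\#\gamma \le (\text{number of clusters visited}) \cdot (\text{max cluster size seen}) + \tau(0,x)/\delta$, which one then converts to an expectation bound. The clean way to organize the cluster contribution is via a greedy-lattice-animal or direct union-bound argument: the expected total size of all open clusters intersecting a fixed path of length $L$ is $O(L)$, and one bootstraps with $L = \#\gamma$ using that geodesics cannot be arbitrarily long before exiting the ball of radius $C\|x\|_1$ — this is exactly where one invokes that the limiting shape is bounded (cited in the excerpt right before the theorem, from \cite[Theorem~6.1]{kesten}), ensuring $\tau(0,x) \le C\|x\|_1$ in expectation and with good probability.

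The main obstacle is the circularity: the bound on $\#\gamma$ in terms of open clusters is itself an estimate on $\#\gamma$, so one needs an a priori restriction on how far a geodesic can stray. The honest fix is a two-step argument: (i) show geodesics stay within a linearly-sized region — either because $\tau(0,x) \le C\|x\|_1$ with overwhelming probability (shape theorem plus large-deviation estimates for $\tau$, which follow from subadditivity and $\mathbb{E} t_e < \infty$) and any edge far outside the ball of radius $C'\|x\|_1$ would force $\tau > C\|x\|_1$; (ii) on that linear-sized region, use the subcritical exponential cluster bound to control the total number of edges that any geodesic can use, via a first-moment computation over all lattice animals of a given size anchored near the segment from $0$ to $x$. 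I would not reproduce Kesten's original combinatorial bookkeeping in detail; the quantitative heart is the exponential decay of subcritical clusters combined with a counting bound on lattice animals (the number of animals of size $n$ containing a fixed vertex grows at most like $C^n$), so that the expected contribution of zero-weight detours is summable and linear in $\|x\|_1$.
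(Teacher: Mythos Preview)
The paper does not give its own proof of this statement: Theorem~\ref{thm: kesten_length} is quoted from Kesten \cite[(2.25)]{kestenspeed} as a known result, with no argument supplied. So there is nothing in the paper to compare your proposal against directly.

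That said, your sketch is heading in the right direction but is more complicated than necessary, and the circularity you flag is not fully resolved. Your step~(i) --- showing geodesics stay in a linearly-sized region --- is essentially the conclusion you want, and your step~(ii) appeal to a lattice-animal counting argument over that region is vague. The standard route, and the one implicit in the paper's toolkit, is far shorter: use Theorem~\ref{thm: kesten_exp} (also quoted from Kesten) as a black box rather than re-deriving the subcritical cluster input. If $\pi(0,x)$ is any self-avoiding geodesic, then on $\{\#\pi(0,x) \ge n\}$ either $\tau(0,x) \ge an$ or there is a self-avoiding path from $0$ of length $\ge n$ with passage time $< an$; hence
\[
\mathbb{P}(\#\pi(0,x) \ge n) \le \mathbb{P}(\tau(0,x) \ge an) + e^{-\mathbf{C}_2 n}\ .
\]
Summing over $n$ gives $\mathbb{E}\#\pi(0,x) \le a^{-1}\mathbb{E}\tau(0,x) + C$, and $\mathbb{E}\tau(0,x) \le \|x\|_1 \mathbb{E}t_e$ by bounding with any deterministic path. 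Since $Geo(0,x) \subseteq \pi(0,x)$, the theorem follows. This is exactly the ``technique of Kesten'' the paper invokes later in the proof of Theorem~\ref{thm: low_density} (see the reference to Eq.~(2.26)--(2.27) of \cite{kestenspeed}). Your open/closed-edge decomposition with threshold $\delta$ is really the content of the \emph{proof} of Theorem~\ref{thm: kesten_exp} itself; once that theorem is available, there is no need to revisit the percolation input.
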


The second tool we shall need is \cite[Propsition~5.8]{kesten} and shows that under assumption \eqref{eqn: geodesics}, it is unlikely that long paths have small passage time. 
\begin{thm}[Kesten]\label{thm: kesten_exp}
Assuming \eqref{eqn: geodesics}, there exist constants $a, \mathbf{C}_2>0$ such that for all $n \in \mathbb{N}$,
\[
\mathbb{P}\bigg(\exists \text{ self-avoiding } \gamma \text{ starting at }0 \text{ with } \#\gamma \geq n \text{ but with } \tau(\gamma) < an \bigg) \leq \exp(-\mathbf{C}_2 n)\ .
\]
\end{thm}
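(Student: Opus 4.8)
The plan is to reduce the statement to subcritical Bernoulli percolation and then run a first-moment argument over the ``cluster skeleton'' of a hypothetical cheap path.

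\textbf{Reduction to percolation.} Since $\mu(\{0\})<p_c(d)$ and $\mu([0,\epsilon])\downarrow\mu(\{0\})$ as $\epsilon\downarrow0$, fix $\epsilon>0$ with $p:=\mu([0,\epsilon])<p_c(d)$ and call an edge \emph{open} if $t_e\le\epsilon$ and \emph{closed} otherwise; this is subcritical bond percolation on $\mathbb{Z}^d$. For every path $\gamma$ we have $\tau(\gamma)\ge\epsilon\,N(\gamma)$ with $N(\gamma):=\#\{e\in\gamma:t_e>\epsilon\}$, so if $\gamma$ is self-avoiding with $\#\gamma\ge n$ and $\tau(\gamma)<an$ then $N(\gamma)<(a/\epsilon)n$; replacing $\gamma$ by its first $n$ edges (which can only decrease $\tau$ and $N$) it suffices to show that for $\delta:=a/\epsilon$ small enough the quantity
\[
q_n:=\mathbb{P}\big(\exists\text{ self-avoiding }\gamma\text{ from }0\text{ with }\#\gamma=n\text{ and fewer than }\delta n\text{ closed edges}\big)
\]
satisfies $q_n\le e^{-cn}$.

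\textbf{The cluster skeleton.} Such a $\gamma$ breaks into at most $\delta n+1$ maximal open segments $S_0,f_1,S_1,\dots,f_k,S_k$ (with $k=N(\gamma)<\delta n$), each $S_i$ contained in a single open cluster. Let $D_1=C(0),\dots,D_r$ ($r\le\delta n+1$) be the distinct clusters so met, where $C(0)$ denotes the open cluster of the origin. Since $\gamma$ is connected, the closed edges $f_1,\dots,f_k$ join the $D_a$ into a connected graph; discarding loops and passing to a spanning tree produces a \emph{cluster-tree}: a tree rooted at $C(0)$ with $r-1\le\delta n$ edges, each a closed $\mathbb{Z}^d$-edge joining two clusters, whose nodes are $D_1,\dots,D_r$. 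As distinct $S_i$ occupy pairwise disjoint edge sets, $\sum_{a=1}^r\#E(D_a)\ge\sum_i\#S_i=n-k\ge(1-\delta)n$. Hence $q_n\le\sum_{r\le\delta n+1}\mathbb{P}\big(\exists\,r\text{-node cluster-tree with }\textstyle\sum_a\#E(D_a)\ge(1-\delta)n\big)$.

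\textbf{First moment over cluster-trees.} The one piece of non-elementary input is the classical exponential decay of the subcritical cluster-size distribution, which for $p<p_c(d)$ gives $\Lambda:=\mathbb{E}_p\,e^{\beta\#E(C(0))}<\infty$ for some $\beta>0$. Put $W(z):=\mathbb{E}\big[\sum_{\text{cluster-trees rooted at }C(0)}z^{\#\text{nodes}}\prod_D e^{\beta\#E(D)}\big]$. Exploring the tree one node at a time — reveal $C(0)$ together with its closed boundary edges, then explore the subtree hanging from each such edge inside the still-unexplored region, where it is stochastically dominated by an independent copy — yields
\[
W(z)\le z\,\mathbb{E}\Big[e^{\beta\#E(C(0))}\big(1+W(z)\big)^{2d\,\#V(C(0))}\Big],
\]
and since $\#V(C(0))\le\#E(C(0))+1$ and the exponential moments are finite, this fixed-point inequality forces $W(z)\le2\Lambda z$ for all $z\le z_0$, for some $z_0>0$. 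Consequently $\mathbb{E}\big[\sum_{r\text{-node cluster-trees}}\prod_D e^{\beta\#E(D)}\big]\le2\Lambda\,z_0^{-(r-1)}$, and Markov's inequality applied to $\prod_D e^{\beta\#E(D)}\ge e^{\beta(1-\delta)n}$ gives
\[
q_n\le\sum_{r\le\delta n+1}e^{-\beta(1-\delta)n}\,2\Lambda\,z_0^{-(r-1)}\le2\Lambda(\delta n+1)\,e^{-\beta(1-\delta)n+\delta n\log(1/z_0)},
\]
which is at most $e^{-cn}$ once $\delta$ (hence $a$) is chosen so small that $\delta\log(1/z_0)<\beta(1-\delta)$. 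The finitely many small values of $n$ are absorbed by taking $\mathbf{C}_2$ still smaller, using that $q_n<1$ for each fixed $n$ when $a$ is small.

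\textbf{Where the work is.} The crux is the recursion for $W(z)$: one must set up the cluster exploration so that the subtrees hanging from $C(0)$ are honestly dominated by mutually independent copies (the usual care with the revealed closed boundary), and then check that $W_0\ge z\,\mathbb{E}[e^{\beta\#E(C(0))}(1+W_0)^{2d\#V(C(0))}]$ admits a small solution. It is essential here that subtrees be counted \emph{unordered}: an ordered count produces a factorial in the number of children of a node, which would wreck the estimate precisely for ``star-shaped'' skeletons (one large cluster with many small clusters hung off its boundary) — a case which is in fact harmless, because such a skeleton forces $C(0)$ itself to be large.
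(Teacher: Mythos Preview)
The paper does not prove this statement; it is quoted from Kesten's Saint-Flour lectures as \cite[Proposition~5.8]{kesten} and used as a black box, so there is no in-paper argument to compare against.

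Your strategy is Kesten's: threshold at level $\epsilon$ to get subcritical Bernoulli percolation, note that a cheap self-avoiding path of length $n$ contains at most $\delta n$ closed edges and hence meets at most $\delta n+1$ open clusters of combined edge-count at least $(1-\delta)n$, then use the exponential tail of subcritical cluster sizes. Kesten organizes the last step as a direct union bound over ``skeletons,'' exploiting that the open runs $S_0,\dots,S_k$ are edge-disjoint pieces of one self-avoiding walk so that the exponential tails multiply cleanly; you instead package the count into a generating function $W(z)$ and a branching-type recursion.

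The recursion is where your argument is incomplete, and your closing paragraph flags this without resolving it. First, the inequality $W(z)\le z\,\mathbb{E}\big[e^{\beta E}(1+W(z))^{2dV}\big]$ requires that, conditional on $C(0)$, the subtree sums at distinct closed boundary edges be controlled by independent copies of $W$. But those sums are functions of the \emph{same} exterior configuration, and the cluster-tree sum in a restricted region is not obviously dominated by the sum in a fresh full lattice: removing edges shrinks each cluster but can increase the number of clusters and hence of admissible trees, so ``stochastically dominated by an independent copy'' needs proof, not assertion. A sequential exploration peeling off one factor $(1+W(z))$ at a time can be made to work, but only after this conditional domination is established. Second, ``forces $W(z)\le2\Lambda z$'' is vacuous as stated, since $W(z)=\infty$ also satisfies the inequality; you must truncate to trees of bounded depth, show $W^{(k+1)}(z)\le\Phi_z(W^{(k)}(z))$ with $W^{(0)}$ small, and induct below the smallest fixed point of $\Phi_z$. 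Neither gap is fatal --- the approach is sound --- but as written the proof does not close.
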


\subsection{Proof sketch}
\noindent
{\bf The setup.} Our argument begins with the setup of Benaim and Rossignol: to bound the variance, we use the inequality of Falik-Samorodnitsky. That is, if $T = \tau(0,x)$ is the passage time, then we enumerate the edges of the lattice as $\{e_1, e_2, \ldots\}$ and perform a martingale decomposition
\[
T - \mathbb{E}T = \sum_{k=1}^\infty V_k\ ,
\]
where $V_k = \mathbb{E}[T \mid \mathcal{F}_k] - \mathbb{E}[T \mid \mathcal{F}_{k-1}]$ and $\mathcal{F}_k$ is the sigma-algebra generated by the edge weights $t_{e_1}, \ldots, t_{e_k}$. Then one has
\[
\Var T~\log \left[ \frac{\Var T}{\sum_{k=1}^\infty (\mathbb{E}|V_k|)^2} \right] \leq \sum_{k=1}^\infty Ent(V_k^2)\ .
\]
(See Lemma~\ref{lem: lower_bound}.) If $\Var T \leq \|x\|^{7/8}$, then the required bound holds; otherwise, one has $\Var T \geq \|x\|^{7/8}$ and the bound is
\[
\Var T~\log \left[ \frac{\|x\|^{7/8}}{\sum_{k=1}^\infty (\mathbb{E}|V_k|)^2} \right] \leq \sum_{k=1}^\infty Ent(V_k^2)\ .
\]
By working with an averaged version $F_m$ of $T$ (similar to that used in \cite{BKS}, but a different definition that simplifies the analysis and requires a new argument) one can ensure that the sum in the denominator on the left is at most order $\|x\|^{3/4}$. (See Proposition~\ref{prop: 5.3}.) Thus we begin our analysis with
\begin{equation}\label{eq: setup}
\Var T \leq \frac{C}{\log \|x\|} \sum_{k=1}^\infty Ent(V_k^2)\ .
\end{equation}

\medskip
\noindent
{\bf Step 1.} Bernoulli encoding. If one knows a log-Sobolev inequality (LSI) of the form $Ent~f^2 \leq C \mathbb{E}\|\nabla f\|_2^2$, then the argument of Benaim-Rossignol would give $\sum_{k=1}^\infty Ent(V_k^2) \leq C\mathbb{E}\|\nabla T\|_2^2$ and the method of Kesten can give an upper bound on this term by $C\|x\|_1$. Combining with \eqref{eq: setup} gives the sub-linear variance bound.

Unfortunately very few distributions satisfy a LSI of the above type. Benaim-Rossignol deal with this by exhibiting certain edge-weight distributions (those in the ``nearly gamma'' class) as images of Gaussian random variables and using the Gaussian LSI. This does not work for all distributions, so our main idea is to encode general edge-weights using infinite sequences of Bernoulli variables and use the Bernoulli (two-point) LSI.

For simplicity, assume that the edge-weights $t_e$ are uniformly distributed on $[0,1]$, so that we can encode their values using the binary expansion and i.i.d. Bernoulli$(1/2)$ sequences 
\[
t_e = \sum_{i=1}^\infty \omega_{e,i} 2^{-i}, \text{ where } (\omega_{e,1}, \omega_{e,2}, \ldots) \text{ is i.i.d. Bernoulli}(1/2)\ .
\]
(For general distributions, we compose with the right-continuous inverse of the distribution function of $t_e$.) Then using the Bernoulli LSI and the argument of Benaim-Rossignol,
\begin{equation}\label{eq: LSI}
\sum_{k=1}^\infty Ent(V_k^2) \leq 2\sum_{k=1}^\infty \sum_{i=1}^\infty \mathbb{E}(\Delta_{e_k,i} T)^2\ ,
\end{equation}
where $\Delta_{e_k,i}$ is the discrete derivative of $T$ relative to flipping the $i$-th bit in the binary expansion of $t_{e_k}$. This is done in Lemma~\ref{eqn: entropybydelta}.

\medskip
\noindent
{\bf Step 2.} The bulk of the paper is devoted to bounding these discrete derivatives: giving the inequality
\[
\sum_{k=1}^\infty \sum_{i=1}^\infty \mathbb{E}(\Delta_{e_k,i} T)^2 \leq C \|x\|_1\ .
\]
This is not a priori clear because flipping bits in the binary expansion can have a large change on $t_e$ if there are gaps in the support of the edge-weight distribution. We deal with this by considering this influence ``on average." That is, letting $\mathbb{E}_{< i}$ be the expectation over the binary variables $\omega_{e_k,1}, \ldots, \omega_{e_k,i-1}$, one has
\[
\mathbb{E}_{< i}(\Delta_{e_k,i}T)^2 = \frac{1}{2^{i-1}} \sum_{\sigma \in \{0,1\}^{i-1}} (T(\sigma,1)-T(\sigma,0))^2\ ,
\]
where we have indicated dependence of $T$ only on the first $i$ binary variables. Because the weights are bounded in $[0,1]$, the differences above are at most 1 (and nonzero only when $e_k$ is in a geodesic from $0$ to $x$ for some value of $t_e$), so we can telescope them, obtaining the upper bound
\[
\mathbf{1}_{\{e_k \in Geo(0,x) \text{ for some value of }t_{e_k}\}} \frac{1}{2^{i-1}} \sum_{\sigma \in \{0,1\}^{i-1}} (T(\sigma,1) - T(\sigma,0)) \leq \frac{1}{2^{i-1}} \mathbf{1}_{\{e_k \in Geo(0,x) \text{ for some value of } t_{e_k}\}}\ .
\]
Pretending for the moment that the indicator is actually of the event that $e_k \in Geo(0,x)$, we can sum over $i$ to give the bound $2 \mathbf{1}_{\{e_k \in Geo(0,x)\}}$, and sum over $k$, using Theorem~\ref{thm: kesten_length}, to obtain
\[
\sum_{k=1}^\infty Ent(V_k^2) \leq C \sum_k \mathbb{P}(e_k \in Geo(0,x)) \leq C\|x\|_1\ .
\]

\medskip
\noindent
{\bf Step 3.} General case. We are not using only uniform $[0,1]$ edge weights, so several complications arise, due both to large edge-weights and to edge-weights near the infimum of the support. The first problem forces the moment condition $\mathbb{E}t_e^2(\log t_e)_+<\infty$ and the second is related to the change from $\mathbf{1}_{\{e \in Geo(0,x) \text{ for some } t_e\}}$ to $\mathbf{1}_{\{e \in Geo(0,x)\}}$. However, careful bounding (for example, keeping track of the value $D_{z,e}$ of the edge-weight above which the edge leaves the geodesic -- see Lemma~\ref{lem: deriv}) leads to the inequality in Proposition~\ref{prop: intermediate}:
\begin{equation}\label{eq: last_entropy_bound}
\sum_{k=1}^\infty Ent(V_k^2) \leq C \mathbb{E} \sum_e (1-\log F(t_e)) \mathbf{1}_{\{e \in Geo(0,x)\}}\ ,
\end{equation}
where $F(t_e)$ is the distribution function of the weight $t_e$. Note that this is large when $t_e$ is near its infimum. In a sense, \eqref{eq: last_entropy_bound} is our version of an LSI, with the penalties due to the fact that we do not have a traditional LSI.

For certain distributions, we can bound $(1-\log F(t_e)) \leq C$ and sum as above. In particular, this is possible when there is an atom at the infimum of the support. But for general distributions, we must analyze the number of edges in the geodesic which have weight near the infimum. For this we use the theory of greedy lattice animals. Theorem~6.6 shows that without such an atom, for any $\epsilon>0$, the expected number of edges in $Geo(0,x)$ with weight within $\epsilon$ of the infimum of the support $I$ satisfies
\[
\mathbb{E}\#\{e \in Geo(0,x) : t_e \in [I,I+\epsilon]\} \leq C \|x\|_1 \beta(\epsilon)\ ,
\]
where $\beta(\epsilon) \to 0$ as $\epsilon \to 0$. Combining this with another dyadic partition of the interval $[I,\infty)$ (see Section~\ref{sec: finishing_deriv}) provides the required control on $(1-\log F(t_e))$ and allows the bound
\[
\mathbb{E} \sum_e (1-\log F(t_e)) \mathbf{1}_{\{e \in Geo(0,x)\}} \leq C\|x\|_1\ .
\]
Along with \eqref{eq: last_entropy_bound}, we obtain $\sum_{k=1}^\infty Ent(V_k^2) \leq C\|x\|$ and complete the proof.

\section{Entropy} \label{sec: entropy}
Recall the definition of entropy with respect to a probability measure $\mu$: 
\begin{df}
Let $(\Omega,\mathcal{F},\mu)$ be a probability space and $X\in L^1(\Omega,\mu)$ be nonnegative. Then
\[
Ent_\mu X = \mathbb{E}_\mu X\log X - \mathbb{E}_\mu X \log \mathbb{E}_\mu X .
\]
\end{df}
Note that by Jensen's inequality, $Ent_\mu X \geq 0$. We will make use of the variational characterization of entropy (see \cite[Section 5.2]{ledoux}):
\begin{prop}\label{prop: characterization}
If $X$ is nonnegative, then
\[
Ent_\mu (X) = \sup \{\mathbb{E}_\mu XY : \mathbb{E}_\mu e^Y \leq 1\}\ .
\]
\end{prop}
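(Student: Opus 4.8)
The plan is to establish the two inequalities separately, after first normalizing. Since $Ent_\mu(cX) = c\,Ent_\mu(X)$ and $\mathbb{E}_\mu[(cX)Y] = c\,\mathbb{E}_\mu[XY]$ for $c>0$, and since both sides of the claimed identity vanish when $X = 0$ $\mu$-a.s., I would assume without loss of generality that $\mathbb{E}_\mu X = 1$, in which case $Ent_\mu(X) = \mathbb{E}_\mu[X\log X]$. The statement is the Gibbs (Donsker--Varadhan) variational principle, and the proof below is the standard one, included for completeness.

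For the bound $\mathbb{E}_\mu[XY] \le Ent_\mu(X)$ whenever $\mathbb{E}_\mu e^Y \le 1$, the key input is the elementary pointwise inequality $ab \le a\log a - a + e^{b}$, valid for all $a \ge 0$ and $b \in \mathbb{R}$ (convention $0\log 0 = 0$); this is Young's inequality for the Legendre-conjugate pair $a\log a - a$ and $e^b$, and can be verified directly by maximizing $b \mapsto ab - e^b$. Applying it with $a = X(\omega)$ and $b = Y(\omega)$ and integrating — the positive part of $XY$ being dominated by the (integrable) right-hand side, so that the expectation is well defined in $[-\infty,\infty)$, and with nothing to prove if $Ent_\mu(X) = \infty$ — gives
\[
\mathbb{E}_\mu[XY] \;\le\; \mathbb{E}_\mu[X\log X] - \mathbb{E}_\mu X + \mathbb{E}_\mu e^Y \;\le\; \mathbb{E}_\mu[X\log X] \;=\; Ent_\mu(X),
\]
using $\mathbb{E}_\mu X = 1$ and $\mathbb{E}_\mu e^Y \le 1$ in the last step.

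For the matching lower bound I would exhibit an explicit near-optimal admissible family rather than just plug in $Y = \log X$, because $\log X$ need not be finite or $\mu$-integrable when $X$ vanishes on a set of positive measure or has heavy tails — this is the one genuine subtlety, and the point where a limiting argument is needed. Set $g_n(t) = \log\big(\min(\max(t,1/n),n)\big)$, a bounded function; let $c_n = \log \mathbb{E}_\mu e^{g_n(X)}$, which is finite since $1/n \le e^{g_n(X)} \le X+1 \in L^1$; and put $Y_n = g_n(X) - c_n$, so that $\mathbb{E}_\mu e^{Y_n} = 1$ and each $Y_n$ is admissible. By dominated convergence $\mathbb{E}_\mu e^{g_n(X)} \to \mathbb{E}_\mu X = 1$, hence $c_n \to 0$. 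For the remaining term, split on $\{X \ge 1\}$, where $X g_n(X) \uparrow X\log X \ge 0$, and on $\{0 < X < 1\}$, where $X g_n(X) \downarrow X\log X$ with $X g_n(X) \ge -1/e$; two applications of monotone convergence give $\mathbb{E}_\mu[X g_n(X)] \to \mathbb{E}_\mu[X\log X] = Ent_\mu(X)$ in $[0,\infty]$. Hence $\mathbb{E}_\mu[X Y_n] = \mathbb{E}_\mu[X g_n(X)] - c_n \to Ent_\mu(X)$, so the supremum is at least $Ent_\mu(X)$; together with the previous paragraph this proves equality, and it simultaneously covers the case $Ent_\mu(X) = \infty$, where the supremum is then infinite as well. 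When $X>0$ $\mu$-a.s. and $Ent_\mu(X)<\infty$ the truncation is unnecessary: $Y = \log X$ is itself admissible and attains the supremum. All steps beyond Young's inequality are routine applications of the monotone and dominated convergence theorems; the only care required is in the construction of the admissible $Y_n$ when $X$ has zeros.
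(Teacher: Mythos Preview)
Your argument is correct and is the standard proof of the Donsker--Varadhan variational formula: Young's inequality $ab \le a\log a - a + e^b$ for the upper bound, and the truncated logarithm $Y_n = \log\big(\min(\max(X,1/n),n)\big) - c_n$ for the lower bound, with the monotone/dominated convergence steps carried out carefully. The handling of the set $\{X=0\}$ and of the case $Ent_\mu(X)=\infty$ is clean.

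There is nothing to compare against, however: the paper does not prove this proposition at all. It is stated with a citation to \cite[Section~5.2]{ledoux} and used as a black box (in the tensorization argument of Theorem~\ref{thm: tensorization} and later in bounding $\mathbb{E}_{\pi_e} T_e^2 N$). So your proof supplies more than the paper does; the paper's ``approach'' is simply to invoke the reference.
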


This characterization will let us prove the ``tensorization'' of entropy.
\begin{thm}\label{thm: tensorization}
Let $X$ be a non-negative $L^2$ random variable on a product probability space
\[\left(\prod_{i=1}^\infty \Omega_i,\mathcal{F}, \mu = \prod_{i=1}^\infty \mu_i \right),\]
where $\mathcal{F} = \bigvee_{i=1}^\infty \mathcal{G}_i$ and each triple $(\Omega_i,\mathcal{G}_i,\mu_i)$ is probability space. Then
\begin{equation}
Ent_\mu X \leq \sum_{k=1}^\infty \mathbb{E}_\mu Ent_{i}X\ ,\label{eqn: tensor}
\end{equation}
where $Ent_i X$ is the entropy of $X(\omega)=X(\omega_1,\ldots, \omega_i, \ldots ))$ with respect to $\mu_i$, as a function of the $i$-th coordinate (with all other values fixed).
\end{thm}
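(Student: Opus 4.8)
The plan is to prove the tensorization inequality \eqref{eqn: tensor} by reduction to a telescoping martingale decomposition, using the variational characterization in Proposition~\ref{prop: characterization}. First I would handle the finite case: fix $n$ and decompose over the first $n$ coordinates, then pass to the limit $n \to \infty$ at the end using the $L^2$ hypothesis and monotone/dominated convergence for the entropy functional. For the finite case, let $\Ec_k X = \mathbb{E}_\mu[X \mid \mathcal{G}_1 \vee \cdots \vee \mathcal{G}_k]$ denote conditional expectation given the first $k$ coordinates (with $\Ec_0 X = \mathbb{E}_\mu X$ and $\Ec_n X = X$ in the finite truncation). The key identity is the chain-rule-style telescoping
\[
Ent_\mu X = \mathbb{E}_\mu X \log X - \mathbb{E}_\mu X \log \mathbb{E}_\mu X = \sum_{k=1}^n \Big( \mathbb{E}_\mu X \log \Ec_k X - \mathbb{E}_\mu X \log \Ec_{k-1} X \Big)\ .
\]

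The central step is to bound each summand by $\mathbb{E}_\mu Ent_k X$, the expected entropy in the $k$-th coordinate. To do this I would use the variational characterization: for fixed values of all coordinates other than the $k$-th, write $Ent_k X = \sup\{ \mathbb{E}_{\mu_k} X Y : \mathbb{E}_{\mu_k} e^Y \le 1\}$, and make the specific choice $Y = \log\big(\Ec_k X / \Ec_{k-1} X\big)$ viewed as a function of the $k$-th coordinate (with the other coordinates frozen). One checks that $\mathbb{E}_{\mu_k} e^Y = \mathbb{E}_{\mu_k}[\Ec_k X]/\Ec_{k-1}X$; since integrating $\Ec_k X$ over the $k$-th coordinate (after conditioning on the first $k-1$) gives exactly $\Ec_{k-1} X$, this ratio is $1$, so $Y$ is admissible. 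Hence $\mathbb{E}_{\mu_k} X Y \le Ent_k X$ pointwise in the remaining coordinates, and taking $\mathbb{E}_\mu$ of both sides yields $\mathbb{E}_\mu X \log(\Ec_k X/\Ec_{k-1}X) \le \mathbb{E}_\mu Ent_k X$. Summing over $k$ gives the finite-$n$ version of \eqref{eqn: tensor}.

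For the passage to the limit, I would note that the right-hand side is a sum of nonnegative terms, so it is monotone in $n$; on the left, $Ent_\mu X$ for the truncated decomposition increases to $Ent_\mu X$ for the full $\sigma$-algebra by the martingale convergence $\Ec_n X \to X$ in $L^2$ (hence a.s.\ along a subsequence) together with standard lower-semicontinuity / monotone convergence properties of $x \mapsto x \log x$. The $L^2$ hypothesis ensures all quantities are finite and the manipulations with $\log$ are legitimate (one should verify integrability of $X \log \Ec_k X$, which follows since $\Ec_k X \le \Ec_k(X) $ is controlled in $L^2$ and $X \in L^2$, via $x\log x \lesssim x^2$ for large $x$ and boundedness near zero).

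The main obstacle I anticipate is the careful justification of admissibility and integrability when $\Ec_{k-1} X$ or $\Ec_k X$ can vanish or be small, so that $Y = \log(\Ec_k X/\Ec_{k-1} X)$ is ill-defined or very negative; the standard fix is to first prove the inequality for $X$ bounded away from $0$ and bounded above (where everything is finite and continuous), and then approximate a general nonnegative $L^2$ variable $X$ by $X \vee \delta$ truncated at level $M$, letting $\delta \downarrow 0$ and $M \uparrow \infty$ using continuity of $Ent$ under such approximations. A secondary technical point is interchanging the supremum (in the variational formula) with the integral over the other coordinates, which is handled by the pointwise bound argument above rather than any measurable-selection theorem, since we exhibit an explicit near-optimal $Y$.
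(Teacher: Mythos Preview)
Your proposal is correct and follows essentially the same approach as the paper: a telescoping decomposition of $Ent_\mu X$ along the filtration $\mathcal{F}_k = \mathcal{G}_1 \vee \cdots \vee \mathcal{G}_k$, bounding each summand via the variational characterization (Proposition~\ref{prop: characterization}) with the explicit test function $Y = \log\big(\mathbb{E}_\mu[X\mid\mathcal{F}_k]/\mathbb{E}_\mu[X\mid\mathcal{F}_{k-1}]\big)$, and passing to the limit using martingale convergence and the $L^2$ hypothesis. The only organizational difference is that the paper keeps $Ent_\mu X$ fixed and carries an explicit remainder term $\mathbb{E}_\mu X\log X - \mathbb{E}_\mu X_n\log X_n$ (shown to vanish via uniform integrability of $(X_n\log X_n)$), whereas you phrase the limit as convergence of the entropy of a truncation; both are valid and use the same ingredients.
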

\begin{proof}
We use a telescoping argument: write $\mathcal{F}_k$ for the sigma algebra generated by $\mathcal{G}_1\cup \cdots \cup \mathcal{G}_k$ (with $\mathcal{F}_0$ trivial) and compute for any $n$
\begin{align*}
Ent_\mu X &= \mathbb{E}_\mu X \left[ \log X - \log \mathbb{E}_\mu X \right] \\
&= \sum_{k=1}^n \mathbb{E}_\mu X \left[ \log \mathbb{E}_\mu [ X \mid \mathcal{F}_k] - \log \mathbb{E}_\mu [X \mid \mathcal{F}_{k-1}] \right] + \mathbb{E}_\mu X \left[ \log X - \log \mathbb{E}_\mu [X \mid \mathcal{F}_n] \right]\\
&= \sum_{k=1}^n \mathbb{E}_\mu \mathbb{E}_{\mu_k} X \left[ \log \mathbb{E}_\mu [ X \mid \mathcal{F}_k] - \log \mathbb{E}_\mu [X \mid \mathcal{F}_{k-1}] \right] \\
&+ \mathbb{E}_\mu X \left[ \log X - \log \mathbb{E}_\mu [ X \mid \mathcal{F}_n] \right]\ .
\end{align*}
Here $\mathbb{E}_{\mu_k}$ is expectation with respect to the coordinate $\omega_k$. Because for almost all realizations of $\{(\omega_i) : i \neq k\}$,
\[
\mathbb{E}_{\mu_k} \exp \left( \log \mathbb{E}_\mu [ X \mid \mathcal{F}_k] - \log \mathbb{E}_\mu [ X \mid \mathcal{F}_{k-1}] \right) = 1\ ,
\]
we use Proposition~\ref{prop: characterization} to get the bound
\[
Ent_\mu X \leq \sum_{k=1}^n \mathbb{E}_\mu Ent_k X + \mathbb{E}_\mu X \log X - \mathbb{E}_\mu X \log \mathbb{E}_\mu [ X \mid \mathcal{F}_n]\ .
\]
Putting $X_n = \mathbb{E}_\mu [ X \mid \mathcal{F}_n]$, one has
\[
\mathbb{E}_\mu X \log \mathbb{E}_\mu [X \mid \mathcal{F}_n] = \mathbb{E}_\mu X_n \log X_n\ .
\]
By martingale convergence (since $X \in L^1$), one has $X_n \to X$ almost surely. Furthermore, since $X \in L^2$, the sequence $(X_n \log X_n)$ is uniformly integrable. Therefore 
\[
\mathbb{E}_\mu X \log X - \mathbb{E}_\mu X \log \mathbb{E}_\mu [ X \mid \mathcal{F}_n] \to 0
\]
and the proof is complete.
\end{proof}

We end this section with the lower bound from Falik and Samorodnitsky \cite[Lemma~2.3]{FS}.
\begin{prop}[Falik-Samorodnitsky]\label{prop: lb}
If $X \geq 0$ almost surely,
\[
Ent_\mu (X^2) \geq \mathbb{E}_\mu X^2 \log \frac{\mathbb{E}_\mu X^2}{(\mathbb{E}_\mu X)^2}\ .
\]
\end{prop}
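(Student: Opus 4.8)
The plan is to expand the definition of $Ent_\mu$ and reduce the inequality, after an algebraic rearrangement, to a single application of Jensen's inequality for a tilted probability measure. We may assume $X\in L^2(\mu)$, so that $Ent_\mu(X^2)$ is defined, and $\mathbb{E}_\mu X>0$; indeed if $X=0$ $\mu$-a.s.\ then both sides vanish (with the convention $0\log 0 = 0$), and if $Ent_\mu(X^2)=+\infty$ there is nothing to prove, since the right-hand side is a finite number whenever $X\in L^2$ (the argument of the logarithm is $\geq 1$ by Jensen). So it remains to treat the case $0<Ent_\mu(X^2)<\infty$.

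First I would rewrite the target inequality. Using $Ent_\mu(X^2) = \mathbb{E}_\mu[X^2\log X^2] - \mathbb{E}_\mu[X^2]\log\mathbb{E}_\mu[X^2]$, writing the right-hand side as $\mathbb{E}_\mu[X^2]\big(\log\mathbb{E}_\mu[X^2] - 2\log\mathbb{E}_\mu[X]\big)$, and using $\log X^2 = 2\log X$ on $\{X>0\}$, the claim becomes equivalent to
\[
\mathbb{E}_\mu[X^2\log X] \ \geq\ \mathbb{E}_\mu[X^2]\,\log\frac{\mathbb{E}_\mu[X^2]}{\mathbb{E}_\mu[X]}\ .
\]
In the remaining case all terms are finite: $\mathbb{E}_\mu[X^2(\log X)_+]<\infty$ is equivalent to $Ent_\mu(X^2)<\infty$, and $x^2(\log x)_-$ is bounded on $[0,1]$ (and vanishes for $x\geq 1$), so $\mathbb{E}_\mu[X^2|\log X|]<\infty$.

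Next I would introduce the probability measure $\nu$ defined by $\frac{d\nu}{d\mu} = X^2/\mathbb{E}_\mu[X^2]$. Since $\nu(\{X=0\})=0$, the function $1/X$ is well-defined $\nu$-a.s., and $\mathbb{E}_\nu[1/X] = \mathbb{E}_\mu[X]/\mathbb{E}_\mu[X^2]$. Dividing the displayed inequality by $\mathbb{E}_\mu[X^2]$ turns its left side into $\mathbb{E}_\nu[\log X]$ and its right side into $\log\frac{\mathbb{E}_\mu[X^2]}{\mathbb{E}_\mu[X]} = -\log\mathbb{E}_\nu[1/X]$, so the assertion is exactly
\[
\mathbb{E}_\nu[\log X] + \log\mathbb{E}_\nu[1/X]\ \geq\ 0\ .
\]
This is immediate from Jensen's inequality applied to the concave function $\log$: $\log\mathbb{E}_\nu[1/X] \geq \mathbb{E}_\nu[\log(1/X)] = -\mathbb{E}_\nu[\log X]$ (the right-hand side is well-defined in $[-\infty,\infty)$ because $\mathbb{E}_\nu[(\log X)_-]<\infty$, and the case $\mathbb{E}_\nu[1/X]=\infty$ is trivial).

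There is no real obstacle here—the core of the argument is this one line of Jensen—so the only mild care needed is the preliminary reduction to $Ent_\mu(X^2)<\infty$ and the verification of the integrability that makes the change of measure and Jensen's inequality legitimate. (Equivalently, the reduced inequality follows from the convexity of $p\mapsto\log\mathbb{E}_\mu[X^p]$ on $[1,2]$, a consequence of H\"older's inequality, by comparing the value at $p=1$ with a supporting line at $p=2$; this is the form in which the estimate appears in \cite{FS}.)
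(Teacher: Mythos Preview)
Your proof is correct and follows essentially the same route as the paper: both introduce the tilted probability measure $d\nu = X^2\,d\mu/\mathbb{E}_\mu X^2$ (the paper does this via the normalization $Y=X/\|X\|_2$, so that $Y^2\,d\mu$ is already a probability measure) and then apply Jensen's inequality for $\log$ under this measure, using $\mathbb{E}_\nu[1/X]=\mathbb{E}_\mu X/\mathbb{E}_\mu X^2$. The only cosmetic difference is that the paper treats the case $\mathbb{P}(X=0)>0$ by the limiting argument $X_n=\max\{1/n,X\}$, whereas you handle it directly by noting $\nu(\{X=0\})=0$ and checking the relevant integrability; the core argument is identical.
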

\begin{proof}
First assume $X > 0$ almost surely and define $Y = X/ \|X\|_2$. Then
\begin{align*}
Ent_\mu (Y^2) = \mathbb{E}_\mu Y^2 \log Y^2 - \mathbb{E}_\mu Y^2 \log \mathbb{E}_\mu Y^2 &= \mathbb{E}_\mu Y^2 \log Y^2 \\
& = -2 \mathbb{E}_\mu Y^2 \log (1/Y)\ .
\end{align*}
Apply Jensen to the measure $\mathbf{E}(\cdot)=\mathbb{E}_\mu (\cdot ~ Y^2)$ and the function $-\log$ to obtain
\[
Ent_\mu (Y^2) \geq -2\mathbb{E}_\mu Y^2 \log \frac{\mathbf{E}(1/Y)}{\mathbb{E}_\mu Y^2} = \mathbb{E}_\mu Y^2 \log \frac{\mathbb{E}_\mu Y^2}{(\mathbb{E}_\mu Y)^2}\ ,
\]
proving the proposition for $Y$. Now for $X$,
\[
Ent_\mu (X^2) = \|X\|_2^2 Ent_\mu (Y^2) \geq \|X\|_2^2 \mathbb{E}_\mu Y^2 \log \frac{\mathbb{E}_\mu Y^2}{(\mathbb{E}_\mu Y)^2} = \mathbb{E}_\mu X^2 \log \frac{\mathbb{E}_\mu X^2}{(\mathbb{E}_\mu X)^2}\ .
\]
If $X = 0$ with positive probability, we can conclude by a limiting argument applied to $X_n = \max\{1/n,X\}$.
\end{proof}

\section{Variance bound for $\tau(0,x)$} \label{sec: bernoulli}
The mechanism for sublinear behavior of the variance which was identified in \cite{BKS} can be understood as follows. Since a geodesic from the origin to $x$ is ``one-dimensional,'' one expects that most edges in the lattice have small probability to lie in it: the edges have small influence. This is not true of edges very close to the origin. To circumvent this difficulty, Benjamini, Kalai and Schramm considered an averaged version of the passage time (see \cite[Lemma~3]{BKS}), which they subsequently compare to the actual passage time from $0$ to $x$. It was brought to our attention by S. Sodin (see \cite[Section~3]{sodin13}) that their argument can be replaced by a geometric average. This observation was made earlier by K. Alexander and N. Zygouras in \cite{AZ} for polymer models. Let $x \in \mathbb{Z}^d$ and $B_m$ be a box of the form $[-m,m]^d$ for $m=\lceil \|x\|_1\rceil ^{1/4}$. Define
\begin{equation}\label{eqn: Fdef}
F_m = \frac{1}{\# B_m} \sum_{z \in B_m} \tau(z,z+x)\ .
\end{equation}
Note that by \eqref{eqn: 2log}, $\Var F_m < \infty$.

\subsection{Approximating $\tau(0,x)$ by $F_m$}
Because of the choice of $m$, the variance of $F_m$ closely approximates that of $\tau$:
\begin{prop}\label{prop: approximating}
Assume $\mathbb{E} t_e^2<\infty$. Then there exists $\mathbf{C}_3>0$ such that 
\[
|\Var \tau(0,x) - \Var F_m| \leq \mathbf{C}_3 \|x\|_1^{3/4} \text{ for all } x\ .
\]
\end{prop}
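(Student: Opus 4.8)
The plan is to bound $|\Var \tau(0,x) - \Var F_m|$ by comparing $F_m$ to $\tau(0,x)$ term-by-term. Write $F_m - \tau(0,x) = \frac{1}{\#B_m}\sum_{z \in B_m} \big(\tau(z,z+x) - \tau(0,x)\big)$, and set $R_m = F_m - \tau(0,x)$. Using the elementary identity $|\Var A - \Var B| = |\Var(B + (A-B)) - \Var B| \le \Var(A-B) + 2\sqrt{\Var B}\sqrt{\Var(A-B)} \le \Var R_m + 2\sqrt{\Var \tau(0,x)}\sqrt{\Var R_m}$, together with Kesten's linear upper bound $\Var \tau(0,x) \le C\|x\|_1$, it suffices to show $\Var R_m \le C\|x\|_1^{1/2}$; then $\Var R_m + 2\sqrt{C\|x\|_1}\sqrt{C\|x\|_1^{1/2}} \le C'\|x\|_1^{3/4}$, which is the claimed bound.

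So the core task is the estimate $\Var R_m \le \mathbb{E} R_m^2 \le C\|x\|_1^{1/2}$ (after also controlling $(\mathbb{E} R_m)^2$, which is even smaller). For this I would bound each difference $\tau(z,z+x) - \tau(0,x)$ geometrically: by the triangle inequality for passage times, $|\tau(z,z+x) - \tau(0,x)| \le \tau(0,z) + \tau(x, z+x)$, since a path from $0$ to $x$ can be extended to one from $z$ to $z+x$ by prepending a geodesic from $z$ to $0$ and appending one from $x$ to $z+x$, and vice versa. Hence $|R_m| \le \frac{1}{\#B_m}\sum_{z\in B_m}\big(\tau(0,z) + \tau(x,z+x)\big)$, and by convexity (Cauchy--Schwarz) $R_m^2 \le \frac{2}{\#B_m}\sum_{z\in B_m}\big(\tau(0,z)^2 + \tau(x,z+x)^2\big)$. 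Taking expectations and using translation invariance, $\mathbb{E} R_m^2 \le \frac{4}{\#B_m}\sum_{z\in B_m}\mathbb{E}\,\tau(0,z)^2$. Since each $z \in B_m$ has $\|z\|_1 \le dm$, it remains to show $\mathbb{E}\,\tau(0,z)^2 \le C\|z\|_1^2$ (uniformly), which gives $\mathbb{E} R_m^2 \le C m^2 = C\|x\|_1^{1/2}$ by the choice $m = \lceil \|x\|_1\rceil^{1/4}$.

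The one nontrivial ingredient is the second-moment bound $\mathbb{E}\,\tau(0,z)^2 \le C\|z\|_1^2$ under the hypothesis $\mathbb{E} t_e^2 < \infty$. This follows from a standard subadditivity argument: fix a deterministic path from $0$ to $z$ of length $\le d\|z\|_1$ (a ``staircase'' path), so $\tau(0,z) \le \sum_{e \in \gamma_0} t_e$, a sum of at most $d\|z\|_1$ i.i.d. copies of $t_e$; then $\mathbb{E}\,\tau(0,z)^2 \le \mathbb{E}\big(\sum_{e\in\gamma_0} t_e\big)^2 \le (d\|z\|_1)^2 \mathbb{E} t_e^2 + d\|z\|_1 \Var t_e \le C\|z\|_1^2$ provided $\|z\|_1 \ge 1$ (and the case $z = 0$ is trivial). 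I expect this second-moment estimate to be the main obstacle only in the bookkeeping sense — it is where the hypothesis $\mathbb{E} t_e^2 < \infty$ is actually used, and one must take a little care that the constant is uniform in $z$; everything else is triangle inequality and Cauchy--Schwarz. Finally, $|\mathbb{E} R_m| \le \mathbb{E}|R_m| \le (\mathbb{E} R_m^2)^{1/2} \le C\|x\|_1^{1/4}$, so $(\mathbb{E} R_m)^2 \le C\|x\|_1^{1/2}$ is dominated by the bound on $\mathbb{E} R_m^2$, and assembling the pieces yields the proposition.
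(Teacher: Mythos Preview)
Your proof is correct and structurally very close to the paper's: both use subadditivity to bound $|\tau(z,z+x)-\tau(0,x)|\le \tau(0,z)+\tau(x,z+x)$, both invoke Kesten's linear variance bound $\Var\tau(0,x)\le C\|x\|_1$, and both reach $\|x\|_1^{3/4}$ via an $L^2$ comparison of $F_m$ with $\tau(0,x)$. The algebraic identity you use, $|\Var A-\Var B|\le \Var(A-B)+2\sqrt{\Var B\,\Var(A-B)}$, is essentially equivalent to the paper's factorization $|\Var A-\Var B|\le(\|\hat A\|_2+\|\hat B\|_2)\|\hat A-\hat B\|_2$.

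The one genuine difference is in how the $L^2$ norm of the difference is controlled. The paper bounds $|\tau(0,x)-F_m|$ pointwise by the maximum $M_x=\max_{z\in B_m}\tau(0,z)$ and then proves a separate lemma (Lemma~\ref{lem: max_lemma}) that $\mathbb{E}M_x^2\le C(\mathrm{diam}\,B_m)^2$ via the $2d$-disjoint-paths trick. You instead average over $z$, apply Jensen, and bound each $\mathbb{E}\tau(0,z)^2$ by a single deterministic path, which needs nothing beyond $\mathbb{E}t_e^2<\infty$. Your route is more elementary for this proposition; the paper's choice is partly because Lemma~\ref{lem: max_lemma} is reused later (in the geodesic-intersection estimate of Section~\ref{sec: lower}). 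A tiny cosmetic point: your displayed formula for $\mathbb{E}(\sum_{e\in\gamma_0}t_e)^2$ is oddly written, but the conclusion $\mathbb{E}\tau(0,z)^2\le C\|z\|_1^2$ is of course correct.
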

\begin{proof}
By subadditivity, for each $z \in B_m$, $|\tau(0,x) - \tau(z,z+x)| \leq \tau(0,z) + \tau(x,z+x)$. Therefore, writing $M_x = \max \{\tau(0,z) : z \in B_m \}$ and $\hat X = X - \mathbb{E} X$,
\[
|\Var \tau(0,x)-\Var F_m| \leq (\|\hat\tau(0,x)\|_2 + \|\hat F_m\|_2) \|\hat \tau(0,x) - \hat F_m\|_2\ .
\]
Using $\|\hat F_m\|_2 \leq \|\hat \tau(0,x)\|_2$, we get the bound
\[
2\|\hat \tau(0,x)\|_2 ( \|\tau(0,x) - F_m\|_2 + \mathbb{E} |\tau(0,x) - F_m|) \leq 4\|\hat \tau(0,x)\|_2 \|M_x\|_2\ .
\]
Since we assume \eqref{eqn: 2log}, \cite[Theorem~1]{kestenspeed} gives $\|\hat \tau(0,x)\|_2 \leq \mathbf{C}_4 \|x\|_1^{1/2}$. On the other hand, we can bound $M_x$ using the following lemma. 
\begin{lma}\label{lem: max_lemma}
If $\mathbb{E}t_e^2<\infty$, there exists $\mathbf{C}_5$ such that for all finite subsets $S$ of $\mathbb{Z}^d$,
\[
\mathbb{E} \left[\max_{x,y \in S} \tau(x,y)\right]^2 \leq \mathbf{C}_5 (\text{diam }S)^2\ .
\]
\end{lma}
\begin{proof}
We start with the argument of \cite[Lemma~3.5]{kesten}. Given $x,y \in S$, we can build $2d$ disjoint (deterministic) paths from $x$ to $y$ of length at most $\mathbf{C}_6 \|x-y\|_1$ for some integer $\mathbf{C}_6$. This means that $\tau(y,z)$ is bounded above by the minimum of $2d$ variables $T_1, \ldots, T_{2d}$, the collection being i.i.d. and each variable distributed as the sum of $\mathbf{C}_6 \text{diam}(S)$ i.i.d. variables $t_e$, so
\[
\mathbb{P}(\tau(x,y) \geq \lambda) \leq \prod_{i=1}^{2d} \mathbb{P}(T_i \geq \lambda) \leq \left[ \frac{\mathbf{C}_6\text{diam}(S)\Var t_e }{(\lambda-\mathbf{C}_6 \text{diam}(S) \mathbb{E} t_e)^2} \right]^{2d}\ .
\]
Therefore if we fix some $x_0 \in S$, for $M = \lceil 2\mathbf{C}_6 \mathbb{E} t_e \rceil$,
\[
\sum_{\lambda = M \text{diam}(S)}^\infty \lambda \max_{y \in S} \mathbb{P}(\tau(x_0,y) \geq \lambda) \leq (4 \mathbf{C}_6 \text{diam}(S) \Var t_e)^{2d} \sum_{\lambda=M \text{diam}(S)}^\infty \lambda^{1-4d} = \mathbf{C}_7 (\text{diam }S)^{2-2d}\ .
\]
Last, by subadditivity,
\begin{align*}
\mathbb{E}\left[ \max_{x,y \in S} \tau(x,y) \right]^2 \leq 4 \mathbb{E} \left[ \max_{y \in S} \tau(x_0,y) \right]^2  &\leq 4(M \text{diam }S)^2 \\
&+ 8 (\text{diam }S)\sum_{\lambda=M\text{diam}(S)}^\infty \lambda \max_{y \in S} \mathbb{P}(\tau(x_0,y) \geq \lambda) \\
&\leq \mathbf{C}_8(\text{diam } S)^2\ .
\end{align*}
\end{proof}
Using the lemma, we find $\|M_x\|_2 \leq \mathbf{C}_9\text{diam}(B_m) \leq \mathbf{C}_{10}\|x\|_1^{1/4}$. This means
\[
|\Var \tau(0,x) - \Var F_m| \leq 4\mathbf{C}_4\mathbf{C}_{10} \|x\|_1^{1/2} \|x\|_1^{1/4} = \mathbf{C}_{11}\|x\|_1^{3/4}\ .
\]
\end{proof}

\subsection{Bounding the variance by the entropy}
Enumerate the edges of $\mathcal{E}^d$ as $e_1, e_2, \ldots$. We will bound the variance of $F_m$ using the martingale decomposition
\[F_m-\mathbb{E}F_m = \sum_{k=1}^\infty V_k,\]
where
\begin{equation} \label{eqn: Vkdef}
V_k = \mathbb{E}[F_m \mid \mathcal{F}_k] - \mathbb{E}[F_m \mid \mathcal{F}_{k-1}],
\end{equation}
and we have written $\mathcal{F}_k$ for the sigma-algebra generated by the weights $t_{e_1}, \ldots, t_{e_k}$ (with $\mathcal{F}_0$ trivial). In particular if $X\in L^1(\Omega, \mathbb{P})$, we have
\begin{equation}
\mathbb{E}[X\mid \mathcal{F}_k] = \int X\big((t_e)_{e\in \mathcal{E}^d}\big)\, \prod_{i\ge k+1}\mu(\mathrm{d}t_{e_i}).
\end{equation}

The idea now is to compare the variance of $F_m$ to $\sum_{k=1}^\infty Ent(V_k^2)$. The lower bound comes from the proof of \cite[Theorem~2.2]{FS}.

\begin{lma}[Falik-Samorodnitsky]\label{lem: lower_bound}
We have the lower bound
\begin{equation}\label{eq: BR_lower_bound}
\sum_{k=1}^\infty Ent(V_k^2) \geq \Var F_m ~ \log \left[ \frac{\Var F_m}{\sum_{k=1}^\infty (\mathbb{E} |V_k|)^2}\right] \ .
\end{equation}
\end{lma}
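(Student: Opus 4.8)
The plan is to apply the Falik-Samorodnitsky lower bound (Proposition~\ref{prop: lb}) to each martingale difference $V_k$ and sum. The starting point is the orthogonality of martingale differences: since $\sum_k V_k = F_m - \mathbb{E} F_m$ in $L^2$ and the $V_k$ are mutually orthogonal, we have $\Var F_m = \sum_{k=1}^\infty \mathbb{E} V_k^2$. So it suffices to relate $\sum_k \mathbb{E} V_k^2$ and $\sum_k (\mathbb{E}|V_k|)^2$ to $\sum_k Ent(V_k^2)$.

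First I would handle a single term. Fix $k$ and condition on $\mathcal{F}_{k-1}$. Note that, conditionally on $\mathcal{F}_{k-1}$, the random variable $V_k = \mathbb{E}[F_m\mid\mathcal{F}_k]-\mathbb{E}[F_m\mid\mathcal{F}_{k-1}]$ is a function of the single coordinate $t_{e_k}$ (all other coordinates appearing in $\mathbb{E}[F_m\mid\mathcal{F}_k]$ have been integrated out), with conditional mean zero. Applying Proposition~\ref{prop: lb} to the nonnegative variable $|V_k|$, with $\mu$ taken to be the conditional law $\mu_{e_k}$ of $t_{e_k}$, gives
\[
Ent_{\mu_{e_k}}(V_k^2) \geq \mathbb{E}[V_k^2\mid\mathcal{F}_{k-1}] \, \log\frac{\mathbb{E}[V_k^2\mid\mathcal{F}_{k-1}]}{(\mathbb{E}[|V_k|\mid\mathcal{F}_{k-1}])^2}\ .
\]
Taking $\mathbb{E}$ of both sides and summing over $k$, and observing that $\mathbb{E}\, Ent_{\mu_{e_k}}(V_k^2)$ is exactly the $k$-th term of $\sum_k Ent(V_k^2)$ (by tensorization, Theorem~\ref{thm: tensorization}, or directly since conditioning on $\mathcal{F}_{k-1}$ and integrating over $t_{e_k}$ is the single-coordinate entropy), we get
\[
\sum_{k=1}^\infty Ent(V_k^2) \geq \sum_{k=1}^\infty \mathbb{E}\left[ \mathbb{E}[V_k^2\mid\mathcal{F}_{k-1}]\, \log\frac{\mathbb{E}[V_k^2\mid\mathcal{F}_{k-1}]}{(\mathbb{E}[|V_k|\mid\mathcal{F}_{k-1}])^2}\right]\ .
\]

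The remaining step is to pass from this sum of conditional expressions to the claimed global bound. Set $a_k = \mathbb{E} V_k^2 = \mathbb{E}\big[\mathbb{E}[V_k^2\mid\mathcal{F}_{k-1}]\big]$ and $b_k = \mathbb{E}|V_k| = \mathbb{E}\big[\mathbb{E}[|V_k|\mid\mathcal{F}_{k-1}]\big]$. By Jensen's inequality applied to the jointly convex function $(u,v)\mapsto u\log(u/v^2)$ on $(0,\infty)^2$ (equivalently, the log-sum type inequality $u\log(u/v^2)\geq u\log(a/b^2) + (\text{linear terms that vanish in expectation})$), each summand on the right is at least $a_k\log(a_k/b_k^2)$; hence $\sum_k Ent(V_k^2)\geq \sum_k a_k\log(a_k/b_k^2)$. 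Finally, with $A=\sum_k a_k = \Var F_m$ and $B = \sum_k b_k^2 = \sum_k (\mathbb{E}|V_k|)^2$, another application of the log-sum inequality gives $\sum_k a_k\log(a_k/b_k^2)\geq A\log(A/B)$, which is precisely \eqref{eq: BR_lower_bound}. I expect the main obstacle to be purely bookkeeping: justifying that $V_k$ is a genuine single-coordinate function conditionally on $\mathcal{F}_{k-1}$ so that Proposition~\ref{prop: lb} applies coordinatewise, and checking the convexity/log-sum inequalities that let one replace the conditional quantities by their expectations (handling the degenerate cases $a_k=0$ or $b_k=0$ by the same truncation argument used in the proof of Proposition~\ref{prop: lb}). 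No deep idea is needed beyond the structure already set up.
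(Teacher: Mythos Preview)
Your detour through conditional quantities introduces a genuine gap. The claimed Jensen step---that
\[
\mathbb{E}\!\left[\,\mathbb{E}[V_k^2\mid\mathcal{F}_{k-1}]\,\log\frac{\mathbb{E}[V_k^2\mid\mathcal{F}_{k-1}]}{(\mathbb{E}[|V_k|\mid\mathcal{F}_{k-1}])^2}\,\right]\;\ge\; a_k\log\frac{a_k}{b_k^2}
\]
follows from joint convexity of $(u,v)\mapsto u\log(u/v^2)$---is false: that function is \emph{not} jointly convex (compute the Hessian: determinant $=-2/v^2<0$). In fact the inequality can fail. If $|V_k|$ happens to be $\mathcal{F}_{k-1}$-measurable (so that $U:=\mathbb{E}[V_k^2\mid\mathcal{F}_{k-1}]$ equals $V^2$ with $V:=\mathbb{E}[|V_k|\mid\mathcal{F}_{k-1}]$), the left side is $0$, while the right side is $a_k\log(a_k/b_k^2)\ge 0$ with strict inequality whenever $V$ is nonconstant. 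A two-coordinate toy example ($t_{e_1},t_{e_2}$ i.i.d.\ uniform on $\{0,2\}$, $F=t_{e_1}t_{e_2}$) already exhibits this. There is also a smaller slip: you assert $\mathbb{E}\,Ent_{\mu_{e_k}}(V_k^2)$ ``is exactly'' $Ent(V_k^2)$ by tensorization, but tensorization is an \emph{upper} bound on the full entropy; what actually holds (and is what you would need) is the opposite inequality $Ent(V_k^2)\ge \mathbb{E}\,Ent_{\mu_{e_k}}(V_k^2)$, which comes from the entropy chain rule, not tensorization.

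The fix is to drop the conditioning entirely, which is exactly what the paper does. Proposition~\ref{prop: lb} applies with the \emph{full} product measure $\mathbb{P}$ to the nonnegative variable $|V_k|$, giving directly
\[
Ent(V_k^2)\;\ge\;\mathbb{E}V_k^2\,\log\frac{\mathbb{E}V_k^2}{(\mathbb{E}|V_k|)^2}\;=\;a_k\log\frac{a_k}{b_k^2}\ .
\]
Your final log-sum step (Jensen for $-\log$ with weights $a_k/\sum_j a_j$, using $\sum_k a_k=\Var F_m$) is correct and is precisely how the paper concludes. So the argument is a two-liner once you apply Proposition~\ref{prop: lb} globally rather than coordinatewise; no convexity in two variables is needed.
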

\begin{proof}
For $M \in \mathbb{N}$, define $\tilde F_m = \mathbb{E} [F_m \mid \mathcal{F}_M]$. We first use Proposition~\ref{prop: lb} and the fact that $\sum_{k=1}^M \mathbb{E} V_k^2 = \Var \tilde F_m$:
\[
\sum_{k=1}^M Ent(V_k^2) \geq \sum_{k=1}^M \mathbb{E} V_k^2 \log \left[ \frac{\mathbb{E}V_k^2}{(\mathbb{E} |V_k|)^2} \right] = - \Var \tilde F_m ~ \sum_{k=1}^M \frac{\mathbb{E}V_k^2}{\Var \tilde F_m} \log \left[ \frac{(\mathbb{E} |V_k|)^2}{\mathbb{E}V_k^2} \right]\ .
\]
Next use Jensen's inequality with the function $-\log$ and sum $\sum_{k=1}^M \frac{\mathbb{E} V_k^2}{\Var \tilde F_m} (\cdot)$ to get the lower bound
\[
-\Var \tilde F_m~ \log \left[ \sum_{k=1}^M \frac{\mathbb{E} V_k^2}{\Var \tilde F_m} \cdot \frac{(\mathbb{E} |V_k|)^2}{\mathbb{E} V_k^2} \right]\ ,
\]
which gives the lemma, after a limiting argument to pass to a countable sum.
\end{proof}

\section{Benaim and Rossignol's approach} \label{sec: BR}
In this section, we explain how the argument developed in \cite{benaimrossignol} can be extended, isolating a more general condition than the ``nearly gamma'' condition. It includes, for example, all power law distributions with $2+\epsilon$ moments. We emphasize that the content of this section is independent of the derivation of our main result.
In \cite{benaimrossignol}, the authors assume that the distribution
\[\mu = h(x)\,\mathrm{d}x,~ h \text{ continuous}\]
is an absolutely continuous measure such that 
\[(\operatorname{supp} h)^\circ := \{x:h(x)>0\} \subset (0,\infty)\]
is an interval. Denoting by $G(x)$ the distribution function of the standard normal distribution, $H(x) =\int_{-\infty}^xh(t)\,\mathrm{d}t$, and $X$ an $N(0,1)$ variable, the random variable
\begin{equation}\label{eqn: cov}
Y=T(X),
\end{equation}
with $T=H^{-1}\circ G$, has distribution $\mu$. Recall the Gaussian logarithmic Sobolev inequality \cite{federbusch, gross, stam}: for any smooth $f:\mathbb{R}\rightarrow \mathbb{R}$
\begin{equation}
\label{eqn: gaussianLSI}
\mathbb{E} f^2(X)\log \frac{f^2(X)}{\mathbb{E}f^2(X)} \le 2\mathbb{E}(f'(X))^2.
\end{equation}
Combining \eqref{eqn: cov} and \eqref{eqn: gaussianLSI}, a calculation yields
\begin{equation}\label{eq: generalLSI}
Ent_\mu (f(Y))^2 \le 2\mathbb{E}_\mu((\psi\cdot f')(Y))^2,
\end{equation}
where
\[\psi(Y) = \frac{(g\circ G^{-1} \circ H)(Y)}{h(Y)}\]
for any $f$ in a suitable Sobolev space.

Benaim and Rossignol apply this inequality to the passage time, using inequality \eqref{eq: BR_lower_bound}. 
It is shown in \cite{benaimrossignol}, along the same lines as the proof of Lemma \ref{eqn: entropybydelta} that \eqref{eq: generalLSI} implies
\begin{equation}\label{eqn: applyLSI}
\sum_{k=1}^\infty Ent_\mu(V_k^2)\le 2\sum_{j=1}^\infty \mathbb{E}[(\psi(t_{e_j})\partial_{t_{e_j}}F_m)^2],
\end{equation}
with $F_m$ as in \eqref{eqn: Fdef}. The derivative with respect to the edge weight can be expressed as
\begin{equation}\label{eqn: Fderivative}
\partial_{t_{e_j}}F_m = \frac{1}{\sharp B_m}\sum_{z\in B_m}\mathbf{1}_{\{e_j\in Geo(z,z+x)\}}\ .
\end{equation}
Observe that the right side of \eqref{eqn: Fderivative} is a decreasing function of the edge weight $t_{e_j}$.  

The following simple asymptotics appear in \cite[Lemma~5.2]{benaimrossignol}:
\begin{lma}
\begin{align}
\label{eqn: Gasymp}
g\circ G^{-1}(y) &\sim y \sqrt{-2\log y}, \quad y\rightarrow 0,\\
g\circ G^{-1}(y) &\sim (1-y)\sqrt{-2\log(1-y)}, \quad y\rightarrow 1.
\end{align}
That is, in each case the ratio of the left to the right side tends to 1.
\end{lma}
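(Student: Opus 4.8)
The plan is to reduce both asymptotics to the classical Mills-ratio estimate for the Gaussian tail. Note first that $G$ satisfies $G(-t)=1-G(t)$, so $G^{-1}(1-y)=-G^{-1}(y)$, and since $g$ is even we get $g\circ G^{-1}(y)=g\circ G^{-1}(1-y)$. Hence the asymptotic as $y\to1$ follows from the one as $y\to0$ applied to $1-y$, and it suffices to treat $y\to0^+$.

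For $y\to0^+$, put $t=G^{-1}(y)<0$ and $u=-t\to+\infty$. Writing $\overline{G}(u)=1-G(u)=\int_u^\infty g(s)\,\mathrm{d}s$, we have $y=G(t)=\overline{G}(u)$ and, by evenness of $g$, $g\circ G^{-1}(y)=g(t)=g(u)$. Using $g'(s)=-s\,g(s)$ and integrating by parts,
\begin{equation*}
\overline{G}(u)=\int_u^\infty\frac{-g'(s)}{s}\,\mathrm{d}s=\frac{g(u)}{u}-\int_u^\infty\frac{g(s)}{s^2}\,\mathrm{d}s,
\end{equation*}
and since $0\le\int_u^\infty g(s)s^{-2}\,\mathrm{d}s\le u^{-2}\overline{G}(u)$, this yields $\frac{g(u)}{u(1+u^{-2})}\le\overline{G}(u)\le\frac{g(u)}{u}$, so $\overline{G}(u)\sim g(u)/u$; equivalently $g(u)\sim u\,\overline{G}(u)=u\,y$. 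Taking logarithms in $\overline{G}(u)=(1+o(1))g(u)/u$ gives $\log y=\log\overline{G}(u)=-u^2/2-\log u-\tfrac12\log(2\pi)+o(1)$, and since $\log u=o(u^2)$ we obtain $-2\log y=u^2(1+o(1))$, hence $u\sim\sqrt{-2\log y}$ (the positive root). Combining the two displays, $g\circ G^{-1}(y)=g(u)\sim u\,y\sim y\sqrt{-2\log y}$, which is the first asymptotic; the second then follows from the symmetry reduction above.

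There is no substantial obstacle here --- the statement is essentially a repackaging of the Mills ratio asymptotic. The only points needing a little care are that in the logarithmic step one uses only the crude form $\log\overline{G}(u)=-u^2/2+O(\log u)$, so the $\log u$ term is negligible against $u^2$, and the symmetry bookkeeping that sends $y\to1$ to $y\to0$. Alternatively, one can avoid the Mills ratio and argue by L'Hôpital, using $\frac{\mathrm{d}}{\mathrm{d}y}\,g\circ G^{-1}(y)=-G^{-1}(y)$ (which comes from $g'/g(t)=-t$) together with the derivative of $y\sqrt{-2\log y}$; but this still reduces to establishing $-G^{-1}(y)\sim\sqrt{-2\log y}$, which is the same content.
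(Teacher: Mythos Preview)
Your argument is correct. The symmetry reduction is valid, the Mills-ratio bound $\frac{g(u)}{u(1+u^{-2})}\le\overline{G}(u)\le\frac{g(u)}{u}$ is derived cleanly via the integration by parts using $g'(s)=-sg(s)$, and the passage from $\log y=-u^2/2+O(\log u)$ to $u\sim\sqrt{-2\log y}$ is legitimate since $\log u=o(u^2)$.

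Note, however, that the paper does not actually supply a proof of this lemma: it is quoted as \cite[Lemma~5.2]{benaimrossignol} and merely stated, so there is no ``paper's proof'' to compare against. Your approach via the Mills ratio is the standard one and is essentially what any proof of this fact would contain; it is presumably what underlies the cited reference as well.
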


Suppose that there is a constant $\mathbf{C}_{12}>0$ such that 
\begin{equation}
\frac{H(t)\sqrt{-\log t}}{h(t)}\le \mathbf{C}_{12} \label{eqn: leftendpoint}
\end{equation}
for all $t$ with $I\leq t \le I+ \delta$, with $\delta>0$ and $I$ the left endpoint of the interval $(\operatorname{supp} h)^\circ$ (as in \eqref{eq: I_def}). The condition \eqref{eqn: leftendpoint} holds, for example, if the density $h$ is monotone near $I$ or if $h(t)\asymp (t-I)^\alpha$ for some (integrable) power $\alpha$. The latter condition appears in \cite[Lemma~5.3]{benaimrossignol}. 

For $M>0$ such that $F(M)<1$, the expectation in \eqref{eqn: applyLSI} can be computed as
\begin{align}
\mathbb{E}[(\psi(t_{e_j})\partial_{t_{e_j}}F_m)^2] &= \mathbb{E}\mathbb{E}_{\mu_j}[(\psi(t_{e_j})\partial_{t_{e_j}}F_m)^2] \nonumber \\
&=  \mathbb{E}\mathbb{E}_{\mu_j}[(\psi(t_{e_j})\partial_{t_{e_j}}F_m)^2; t_{e_j} \le M]+ \mathbb{E}\mathbb{E}_{\mu_j}[(\psi(t_{e_j})\partial_{t_{e_j}}F_m)^2; t_{e_j} > M]\ .\label{eqn: split}
\end{align}
For $t_{e_j}\le M$, \eqref{eqn: Gasymp} implies that the first term in \eqref{eqn: split} is bounded by
\[\left(\max\left\{ \mathbf{C}_{12},\sup_{\delta \le t\le M} h(t)^{-1}\right\}\right)^2\cdot \mathbb{E}_{\mu_j}(\partial_{t_{e_j}}F_m)^2.\]
The maximum is finite by assumption, and we have, by Cauchy-Schwarz,
\[\mathbb{E}(\partial_{t_{e_j}}F_m)^2 \le \frac{1}{\sharp B_m}\sum_{z\in B_m}\mathbb{E}(\mathbf{1}_{\{e_j\in Geo(z,z+x)}\}).\]
From there, one can conclude the argument as in Sections~\ref{sec: finishing_deriv} and \ref{sec: proof}.

As for the second term in \eqref{eqn: split}, assume first that
\begin{equation}\label{eqn: sqrtnearlygamma}
\psi(t_{e_j})\le \mathbf{C}_{13}\sqrt{t_{e_j}}.
\end{equation}
This is the ``nearly gamma'' condition of Benaim and Rossignol. The right side of \eqref{eqn: sqrtnearlygamma} is increasing in $t_{e_j}$. Using this in \eqref{eqn: split} together with the Chebyshev association inequality \cite[Theorem~2.14]{BLM}, we find
\begin{align} \label{eqn: fkgapplication}
 \mathbb{E}\mathbb{E}_{\mu_j}[(\psi(t_{e_j})\partial_{t_{e_j}}F_m)^2; t_{e_j} > M] &\le  \mathbf{C}^2_{13}\mathbb{E}\mathbb{E}_{\mu_j}(\sqrt{t_{e_j}}\cdot \partial_{t_{e_j}}F_m)^2\\
&\le \mathbf{C}_{13}^2\mathbb{E}(t_{e_j})\cdot\mathbb{E}(\partial_{t_{e_j}}F_m)^2. \nonumber
\end{align}
The previous argument shows that the condition \eqref{eqn: sqrtnearlygamma} is not necessary: it is sufficient that $\psi$ be bounded by some increasing, square integrable function of $t_{e_j}$. Suppose for example that $t\mapsto h(t)$ is decreasing for $t>M$. In this case, by \eqref{eqn: Gasymp}, we have
\begin{align}
\psi(t_{e_j})\mathbf{1}_{\{t_{e_j} > M\}} &= \frac{(g\circ G^{-1} \circ H)(t_{e_j})}{h(t_{e_j})}\mathbf{1}_{\{t_{e_j} > M\}} \nonumber\\
&\le \mathbf{C}_{14} \frac{(1-H(t_{e_j}))\cdot \sqrt{-2\log(1-H(t_{e_j})})}{h(t_{e_j})}\mathbf{1}_{\{t_{e_j} > M\}}. \label{eqn: big}
\end{align}

Let us denote by $K(t_{e_j})$ the expression in \eqref{eqn: big}. For $t>M$, we have
\begin{align}\label{eqn: weights}
1-H(t) = \int_t^\infty h(s)\,\mathrm{d}s &= \int_t^\infty s^{2/3+\epsilon} s^{-2/3-\epsilon} h(s)\,\mathrm{d}s \nonumber \\
&\le \left( \int_t^\infty s^{2+3 \epsilon} \, h(s)\mathrm{d}s\right)^{1/3}\left(\int_t^\infty s^{-1-3\epsilon/2}h(s)\,\mathrm{d}s\right)^{2/3} \nonumber \\
&\le \mathbf{C}_{15} h(t)^{2/3},\nonumber
\end{align}
assuming $h(s)$ is decreasing for $s> M$ and that the distribution posesses $2+3\epsilon$ moments. We have used the $L^3-L^{3/2}$ H\"older inequality. This gives
\[K(t) \le \mathbf{C}_{14}\mathbf{C}_{15} h(t)^{-1/3}\sqrt{-2\log(1-H(t))} \cdot \mathbf{1}_{\{t>M\}}.\]
Thus $K(t)$ is bounded by a quantity which is increasing in $t$. Using the Chebyshev association inequality as in \eqref{eqn: fkgapplication}, we find
\[\mathbb{E}\mathbb{E}_{\mu_j}[(\psi(t_{e_j})\partial_{t_{e_j}}F_m)^2; t_{e_j} > M] \le  (\mathbf{C}_{14}\mathbf{C}_{15})^2\mathbb{E}\left(h^{-1/3}(t_{e_j}) \sqrt{-2\log(1-H(t_{e_j})}) \right)^2\cdot\mathbb{E}(\partial_{t_{e_j}}F_m)^2.\]
We are left with the task of estimating the first expectation, which is
\[\int h(s)^{-2/3}(-2\log(1-H(s)) h(s)\,\mathrm{d}s=\int h(s)^{1/3}(-2\log(1-H(s))\,\mathrm{d}s.\]
We again use polynomial weights and $L^3-L^{3/2}$:
\begin{align*}
\int h(s)^{1/3}(-2\log(1-H(s)))\,\mathrm{d}s &= \int s^{-2/3-\epsilon} s^{2/3+\epsilon}h(s)^{1/3}(-2\log(1-H(s)))\,\mathrm{d}s\\
&\le  \left(\int s^{-1-3\epsilon/2}\,\mathrm{d}s\right)^{2/3}\left(\int s^{2+3\epsilon}(-2\log(1-H(s)))^3h(s)\,\mathrm{d}s\right)^{1/3}.
\end{align*}
A further application of H\"older's inequality allows one to control the logarithm, at the cost of an arbitrarily small increase in the moment assumption. It follows that
\[\mathbb{E}(\psi(t_{e_j})\partial_{t_{e_j}}F_m)^2\le \mathbf{C}_{16}\mathbb{E}(\partial_{t_{e_j}}F_m)^2 \]
if the distribution $\mu$ has $2+\epsilon'$ moments. In conclusion, Benaim and Rossignol's argument extends to the case of distributions with $2+\epsilon$ moments whose densities are positive and eventually decreasing.

One can derive many variants of the above, the key point being the application of positive association in \eqref{eqn: fkgapplication}.

\section{The lower bound} \label{sec: lower}
In this section we derive the first generalization of Kesten's geodesic length estimate and show how it is used to bound the sum $\sum_{k=1}^\infty (\mathbb{E}|V_k|)^2$ appearing in \eqref{eq: BR_lower_bound}. Let $\mathcal{G}$ be the set of all finite self-avoiding geodesics.

\begin{lma}\label{lem: exp_intersections}
Assuming \eqref{eqn: 2log} and \eqref{eqn: geodesics}, there exists $\mathbf{C}_{17}>0$ such that for all $x$ and all finite $E \subset \mathcal{E}^d$,
\[
\mathbb{E} \max_{\gamma \in \mathcal{G}} \# (E \cap \gamma) \leq \mathbf{C}_{17} \text{diam}(E)\ .
\]
\end{lma}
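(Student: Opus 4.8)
The plan is to reduce the statement to Kesten's geodesic length bound (Theorem~\ref{thm: kesten_length}) together with his exponential estimate (Theorem~\ref{thm: kesten_exp}). The point is that a geodesic $\gamma \in \mathcal{G}$ can only pick up many edges of a fixed finite set $E$ if it contains a long sub-path, and long sub-paths are forced to have large passage time, which in turn contradicts the geodesic property once we know that geodesics from a point to nearby points do not have excessive passage time. Concretely, I would first observe that it suffices to prove the bound for $E$ equal to the full edge set of a box of side length $2R$ with $R = \text{diam}(E)$, since intersecting with a superset only increases the left-hand side; call this box $B$. Any self-avoiding geodesic $\gamma$ meeting $B$ and intersecting $B$ in $N$ edges has a connected sub-path $\gamma'$ inside $B$ (or, more carefully, we decompose $\gamma \cap B$ into maximal sub-paths) containing at least (a constant times) $N$ edges; being a sub-path of a geodesic, $\gamma'$ is itself a geodesic between its endpoints, which lie in $B$ and hence are at $\ell^1$-distance at most $\mathbf{C}R$.

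Next I would set up the deterministic comparison: if $\gamma'$ is a geodesic between two points $u,v \in B$ with $\#\gamma' \geq n$, then $\tau(u,v) = \tau(\gamma') \geq \tau$ of any long self-avoiding path starting at $u$. Thus the event $\{\max_{\gamma \in \mathcal{G}} \#(E \cap \gamma) \geq n\}$ is contained in the event that some $u \in B$ is the start of a self-avoiding path of length at least $cn$ whose passage time is at most $\max_{u,v \in B} \tau(u,v)$. To make Theorem~\ref{thm: kesten_exp} applicable I would split into two regimes depending on whether $\tau(\gamma') < a \cdot \#\gamma'$ (with $a$ the constant of Theorem~\ref{thm: kesten_exp}) or not. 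In the first regime, a union bound over the at most $(\mathbf{C}R)^d$ starting vertices $u \in B$ and Theorem~\ref{thm: kesten_exp} give probability at most $(\mathbf{C}R)^d \exp(-\mathbf{C}_2 c n)$. In the second regime we have $a \cdot \#\gamma' \leq \tau(\gamma') = \tau(u,v) \leq \max_{u,v\in B}\tau(u,v) =: M_B$, so $\#(E\cap\gamma) \leq \mathbf{C} \#\gamma' \leq (\mathbf{C}/a) M_B$; here Lemma~\ref{lem: max_lemma} gives $\mathbb{E}M_B^2 \leq \mathbf{C}_5 (\text{diam }B)^2 = \mathbf{C} R^2$, hence also $\mathbb{E} M_B \leq \mathbf{C} R$.

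Putting the two regimes together: write $N := \max_{\gamma \in \mathcal{G}} \#(E \cap \gamma)$. On the complement of the ``long light path'' event $A_n := \{\exists\,u \in B \text{ starting a self-avoiding path of length} \geq cn \text{ with passage time} < acn\}$, we have the deterministic bound $N \leq (\mathbf{C}/a) M_B$. Therefore
\[
\mathbb{E}N \leq \mathbb{E}[N \mathbf{1}_{A_{N}}] + (\mathbf{C}/a)\,\mathbb{E}M_B,
\]
and for the first term I would use $N \leq \mathbf{C}\#Geo$-type crude bounds or simply $N \leq \#E \leq \mathbf{C}R^d$ together with $\mathbb{P}(A_n) \leq (\mathbf{C}R)^d e^{-\mathbf{C}_2 cn}$, summing $\sum_{n \geq 1}\mathbb{P}(N \geq n) \leq \sum_n \min\{1,(\mathbf{C}R)^d e^{-\mathbf{C}_2 cn}\}$; the tail sum contributes only $\mathbf{C}\log R$ once $n \gtrsim d\log R$, which is absorbed into $\mathbf{C}_{17}\,\text{diam}(E)$ (for small $R$ the trivial bound $N \leq \#E$ suffices). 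The main obstacle I anticipate is the bookkeeping in the decomposition of $\gamma \cap B$ into sub-paths and verifying that at least one sub-path has length comparable to $N$ — one must handle the case where $\gamma$ enters and leaves $B$ many times, each time collecting only a few edges. This is resolved by noting that each excursion of $\gamma$ outside $B$ forces $\gamma$ to travel, so the total number of excursions meeting $E$ is itself controlled, or alternatively by applying the argument to each maximal sub-path separately and summing; either way the constant depends only on $d$.
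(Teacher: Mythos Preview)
Your overall strategy---combine Kesten's exponential estimate (Theorem~\ref{thm: kesten_exp}) with the moment bound on $\max_{u,v}\tau(u,v)$ from Lemma~\ref{lem: max_lemma}---is exactly what the paper does. The difference is that the paper sidesteps entirely the obstacle you flag at the end, and your proposed fixes for that obstacle do not obviously work.

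The issue is that you insist the relevant sub-path $\gamma'$ lie \emph{inside} $B$, which forces you to decompose $\gamma\cap B$ into possibly many excursions. Since $\gamma$ is self-avoiding, the number of excursions is bounded only by $|\partial B|\sim R^{d-1}$, and ``applying the argument to each sub-path and summing'' gives no control on $\sum_i\#\gamma_i'$: each excursion separately satisfies $\#\gamma_i'\le M_B/a$ on the good event, but that bounds the maximum excursion length, not the sum. Your first fix (``each excursion outside $B$ forces $\gamma$ to travel'') does not help either, since excursions outside $B$ can be a single edge.

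The paper's trick is to drop the requirement that $\gamma'$ stay in $E$. If $\#(E\cap\gamma)\ge\lambda$, let $y,z$ be the \emph{first and last} vertices of $\gamma$ lying in the vertex set $V$ of $E$, and take $\gamma'$ to be the portion of $\gamma$ from $y$ to $z$. Then $\gamma'$ is a single geodesic (as a subpath of a geodesic), it contains every edge of $E\cap\gamma$ so $\#\gamma'\ge\lambda$, and its endpoints lie in $V$. Now the dichotomy is clean: either $\tau(\gamma')<a\lambda$, which by a union bound over starting vertices in $V$ and Theorem~\ref{thm: kesten_exp} has probability at most $(\#V)e^{-\mathbf{C}_2\lambda}$, or $\max_{y,z\in V}\tau(y,z)\ge a\lambda$. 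Summing the tail over $\lambda\ge\text{diam}(E)$, using $\text{diam}(E)\gtrsim(\#V)^{1/d}$ for the first piece and Lemma~\ref{lem: max_lemma} for the second, gives the bound directly with no multi-excursion bookkeeping.
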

\begin{proof}
Choose $a, \mathbf{C}_2>0$ from Theorem~\ref{thm: kesten_exp}.
If $\# (E \cap \gamma) \geq \lambda$ for some $\gamma \in \mathcal{G}$, then we may find the first and last intersections (say $y$ and $z$ respectively) of $\gamma$ with $V$, the set of endpoints of edges in $E$. The portion of $\gamma$ from $y$ to $z$ is then a geodesic with at least $\lambda$ edges. 
This means
\[
\mathbb{P}(\# (E \cap \gamma) \geq \lambda \text{ for some }\gamma \in \mathcal{G}) \leq (\# V) \exp(-\mathbf{C}_2 \lambda) + \mathbb{P}\left( \max_{y,z \in V}\tau(y,z) \geq a \lambda \right)\ .
\]
Therefore
\[
\mathbb{E} \max_{\gamma \in \mathcal{G}} \#(E \cap \gamma) \leq \text{diam}(E) + \sum_{\lambda=\text{diam}(E)}^\infty (\#V) \exp(-\mathbf{C}_2 \lambda) + \sum_{\lambda =  \text{diam}(E) }^\infty \mathbb{P}\left( \max_{y,z \in V}\tau(y,z) \geq a \lambda \right)\ .
\]
By the inequality $\text{diam}(E) \geq \mathbf{C}_{18}(\#V)^{1/d}$ for some universal $\mathbf{C}_{18}$, the middle term is bounded uniformly in $E$, so we get the upper bound
\[
\mathbf{C}_{19}\text{diam}(E) + \frac{1}{a} \mathbb{E} \max_{y,z \in V} \tau(y,z)\ .
\]
By Lemma~\ref{lem: max_lemma}, this is bounded by $\mathbf{C}_{20} \text{diam}(E)$.
\end{proof}

We will now apply Lemma~\ref{lem: exp_intersections} to get an upper bound on $\sum_{k=1}^\infty (\mathbb{E}|V_k|)^2$. To do so, we use a simple lemma, a variant of which is already found in various places, including the work of Benaim-Rossignol \cite[Lemma~5.9]{benaimrossignol}. For its statement, we write an arbitrary element $\omega \in \Omega$ as $(t_{e^c},t_e)$, where $t_{e^c} = (t_f : f \neq e)$. Further, set
\[
S := \sup \text{supp}(\mu) = \sup \{x : F(x)<1\} \in \mathbb{R} \cup \{\infty\}\ .
\]
We use the following short-hand:
\[
\tau_z = \tau(z,z+x)\ .
\]
\begin{lma}\label{lem: deriv}
For $e \in \mathcal{E}^d$ and $z \in \mathbb{Z}^d$, the random variable
\[
D_{z,e} := \sup \{r < S : e \text{ is in a geodesic from }z \text{ to } z+x \text{ in } (t_{e^c},r)\}\
\] 
has the following properties almost surely.
\begin{enumerate}
\item $D_{z,e} < \infty$.
\item For $s \leq t < S$,
\[
\tau_z(t_{e^c}, t) - \tau_z(t_{e^c},s) = \min\{t-s, (D_{z,e}-s)_+\}\ .
\]
\item For $s<D_{z,e}$, $e\in Geo(z,z+x)$ in $(t_{e^c},s)$.
\end{enumerate}
\end{lma}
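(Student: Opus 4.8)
\textbf{Proof plan for Lemma~\ref{lem: deriv}.}

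The plan is to analyze $\tau_z(t_{e^c}, \cdot)$ as a function of the single variable $r = t_e$, holding all other weights fixed. The basic structural fact is that $\tau_z(t_{e^c}, r) = \min\{ A, B + r\}$, where $A := \inf_{\gamma \not\ni e} \tau(\gamma)$ is the best passage time over paths from $z$ to $z+x$ avoiding $e$, and $B := \inf_{\gamma \ni e} \sum_{f \in \gamma, f \neq e} t_f$ is the best ``cost of the rest of the path'' over paths that use $e$. Both $A$ and $B$ are finite almost surely: $A$ because \eqref{eq: exist_geodesics} gives geodesics (and one can always route around a single edge with finite cost), and $B$ trivially since some path through $e$ exists with finite complementary cost. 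Thus $\tau_z(t_{e^c}, r)$ is the minimum of a constant and an affine increasing function of slope $1$; it is nondecreasing, concave, and $1$-Lipschitz in $r$.

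Next I would identify $D_{z,e}$ with the ``kink'' of this function. By definition $e$ lies in some geodesic from $z$ to $z+x$ in $(t_{e^c}, r)$ exactly when $B + r \le A$, i.e.\ when $r \le A - B$ (with a boundary case at equality where both an $e$-path and a non-$e$-path are optimal, so $e$ is still in \emph{a} geodesic). Hence for $r < S$ the supremum defining $D_{z,e}$ equals $\min\{A - B, S\}$ when $A - B \ge I$, and one checks that in fact $A-B \ge I$ always, since any path through $e$ has complementary cost $B$ and total cost at least $B + I$ when $r \ge I$, while $A$ is the infimum over competing paths which must beat this to exclude $e$ --- more carefully, if $A - B < I$ then for every $r \ge I$ the $e$-paths already lose, but one must still confirm the sup over $\{r < S\}$ is well-defined; since the set $\{r < S : B + r \le A\}$ is an interval of the form $(-\infty, A-B] \cap (-\infty, S)$ intersected with the relevant range, its supremum is $\min\{A-B, S\}$ when nonempty. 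The key point for (1) is simply that $D_{z,e} \le \max\{A - B, I\} \le A < \infty$ a.s. For (3): if $s < D_{z,e} = \min\{A-B,S\}$ then $B + s < A$ strictly (using $s < A - B$), so every optimal path must pass through $e$, giving $e \in Geo(z, z+x)$ in $(t_{e^c}, s)$. For (2): write $\tau_z(t_{e^c}, r) = \min\{A, B + r\}$; then for $s \le t < S$,
\[
\tau_z(t_{e^c}, t) - \tau_z(t_{e^c}, s) = \min\{A, B+t\} - \min\{A, B+s\}\ ,
\]
and a direct case analysis on the position of $A - B$ relative to $s$ and $t$ shows this equals $\min\{t - s, (D_{z,e} - s)_+\}$ after substituting $D_{z,e} = A - B$ on the relevant event (the cases $S \le A - B$ and $A - B < I$ need separate but routine checking, using that $r$ ranges over $[I, S)$).

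The main obstacle I anticipate is bookkeeping around the endpoints of the support --- precisely reconciling the definition of $D_{z,e}$ as a supremum over $\{r < S\}$ with the clean identity $D_{z,e} = A - B$, in the edge cases $A - B \ge S$ (the edge stays in the geodesic for all admissible weights, so the sup is genuinely pushed to $S$ but the difference formula still works because $t < S$) and $A - B < I$ (the edge is never in a geodesic for $r$ in the support, so the sup should be interpreted via $r < S$ possibly taking values below $I$, and one must confirm $D_{z,e}$ is still finite and that claims (2),(3) hold vacuously or correctly). I would handle these by first establishing everything on $\{A - B \in [I, S)\}$, where $D_{z,e} = A-B$ exactly, and then treating the two extreme events separately with the affine-minimum representation, which makes all three claims transparent.
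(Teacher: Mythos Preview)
Your approach is correct and is a genuine alternative to the paper's argument. The paper never introduces the quantities $A$ and $B$; instead it argues directly that the indicator $\mathbf{1}_{\{e \text{ is in a geodesic}\}}$ is nonincreasing in $t_e$ (their \eqref{eq: dec}), and then proves part~2 by a case analysis on the position of $s,t$ relative to $D_{z,e}$, invoking the a.s.\ existence of geodesics from \eqref{eq: exist_geodesics} at each step to produce an explicit optimal path and track how its cost changes. Part~3 is done by a short contradiction argument using the same monotonicity. Your route --- writing $\tau_z(t_{e^c},r)=\min\{A,B+r\}$ and reading off $D_{z,e}=\min\{A-B,S\}$ --- makes the piecewise-affine structure explicit and reduces all three parts to algebra; it is arguably cleaner, and the case analysis for part~2 becomes the trivial computation $\min\{A,B+t\}-\min\{A,B+s\}$. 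What the paper's approach buys is that it never needs to discuss whether the separate infima $A$ and $B$ are achieved, only that the overall infimum $\tau_z$ is.

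Two small points to tighten in your write-up. First, the assertion ``$A-B\ge I$ always'' is false (take $e$ far from any reasonable path from $z$ to $z+x$, so $B\gg A$); you immediately back off from it, but you should simply drop it and note that when $A-B<I$ (or $<0$) the defining set may be empty, $D_{z,e}=-\infty$ by convention, and parts~1--3 are then trivial or vacuous. Second, for the equivalence ``$e$ lies in some geodesic $\iff B+r\le A$'' you implicitly use that the infimum $B$ is achieved; this follows (for a.e.\ $t_{e^c}$ and every $r\ge 0$) from the same a.s.\ geodesic-existence statement \eqref{eq: exist_geodesics} that the paper invokes, but it is worth saying so explicitly since the boundary case $B+r=A$ is where the two directions of the equivalence could decouple.
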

\begin{proof}
Part 1 is clear if $S < \infty$. Otherwise choose any path $\gamma$ not including $e$. Then for $r$ larger than the passage time of this path, $e$ cannot be in a geodesic in $(t_{e^c},r)$, giving $D_{z,e} < \infty$.

If $e$ is in a geodesic $\gamma$ in $(t_{e^c},t)$ and $t \geq s$ then the passage time of $\gamma$ decreases by $t-s$ in $(t_{e^c},s)$. Since the passage time of no other path decreases by more than $t-s$, $\gamma$ is still a geodesic in $(t_{e^c},s)$. This shows that 
\begin{equation}\label{eq: dec}
\mathbf{1}_{\{e \text{ is in a geodesic from } z \text{ to } z+x\}} \text{ is a non-increasing function of }t_e\ .
\end{equation}
Therefore if $t < D_{z,e}$, $e$ is in a geodesic in $(t_{e^c},t)$ and by the above argument, for any $s \leq t$, part 2 holds. We can then extend to $s \leq t \leq D_{z,e}$ by continuity.

If $D_{z,e} < s \leq t$ then $e$ is not in a geodesic from $z$ to $z+x$ in $(t_{e^c},s)$. By \eqref{eq: exist_geodesics}, we can almost surely find a geodesic $\gamma$ in $(t_{e^c},s)$ not containing $e$ and this path has the same passage time in $(t_{e^c},t)$. However all other paths have no smaller passage time, so $\tau_z(t_{e^c},t) - \tau_z(t_{e^c},s) = (D_{z,e}-s)_+$ almost surely, proving part 2 in this case. We can then extend the result to $D_{z,e} \leq s \leq t$ by continuity and for $s \leq D_{z,e} \leq t$ write
\[
\tau_z(t_{e^c},t) - \tau_z(t_{e^c},s) = \tau_z(t_{e^c},t) - \tau_z(t_{e^c},D_{z,e}) + \tau_z(t_{e^c},D_{z,e}) - \tau_z(t_{e^c},s)\ , 
\]
and use the other cases to complete the proof.

For part 3, let $s<D_{z,e}$, so that by \eqref{eq: dec}, $e$ is in a geodesic $\gamma_1$ in $(t_{e^c},\frac{s+D_{z,e}}{2})$ from $z$ to $x+z$. Assume for a contradiction that $e$ is not in every geodesic from $z$ to $x+z$ in $(t_{e^c},s)$, and choose $\gamma_2$ as one that does not contain $e$. Because $\frac{s + D_{z,e}}{2} \geq s$, $\gamma_1$ is still a geodesic in $(t_{e^c},s)$ and therefore has the same passage time in this configuration as $\gamma_2$. But then in $(t_{e^c},\frac{s+D_{z,e}}{2})$ it has strictly larger passage time, contradicting the fact that it is a geodesic.
\end{proof}

\begin{prop}\label{prop: 5.3}
Assuming \eqref{eqn: 2log} and \eqref{eqn: geodesics}, there exists $\mathbf{C}_{21}$ such that
\[
\sum_{k=1}^\infty (\mathbb{E}|V_k|)^2 \leq \mathbf{C}_{21} \|x\|_1^{\frac{5-d}{4}} \text{ for all } x\ .
\]
\end{prop}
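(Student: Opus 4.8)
The plan is to bound each $\mathbb{E}|V_k|$ by the probability that the edge $e_k$ matters for at least one of the translated passage times $\tau_z$ appearing in $F_m$, then sum these probabilities using the intersection estimate of Lemma~\ref{lem: exp_intersections}. First I would observe that $V_k$ depends only on the weight $t_{e_k}$ (after integrating out the later coordinates), and that $F_m$ is an average of the $\tau_z = \tau(z,z+x)$, $z \in B_m$. For a single $z$, part 2 of Lemma~\ref{lem: deriv} shows that $\tau_z$ as a function of $t_{e_k}$ is constant once $t_{e_k} \ge D_{z,e_k}$, and more precisely that its total variation over $t_{e_k} \in [I,S)$ is at most $(D_{z,e_k} - I)_+ \le S - I$ (and it is $0$ unless $e_k$ lies in some geodesic from $z$ to $z+x$ for some value of $t_{e_k}$). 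A standard bound for a martingale increment of a function with bounded coordinatewise oscillation gives $\mathbb{E}|V_k| \le C\, \mathbb{E}\,\mathrm{osc}_{e_k} F_m$, where the oscillation of $F_m$ in the $e_k$ coordinate is at most $\frac{1}{\#B_m}\sum_{z\in B_m} \mathbf{1}\{e_k \text{ is pivotal-range for } \tau_z\}$ times $(S-I)$; I would need to treat the case $S = \infty$ separately, using the moment assumption \eqref{eqn: 2log} to control the tail contribution of large $t_{e_k}$, or simply truncate at a level $M$ and absorb the remainder.

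The key reduction is then
\[
\mathbb{E}|V_k| \le \frac{C}{\#B_m} \sum_{z \in B_m} \mathbb{P}\big(e_k \in Geo(z,z+x) \text{ for some value of } t_{e_k}\big) =: \frac{C}{\#B_m}\sum_{z\in B_m} p_{k,z}\ .
\]
Squaring and using Cauchy--Schwarz in $z$,
\[
(\mathbb{E}|V_k|)^2 \le \frac{C}{\#B_m} \sum_{z\in B_m} p_{k,z}^2 \le \frac{C}{\#B_m}\sum_{z\in B_m} p_{k,z}\ ,
\]
since $p_{k,z}\le 1$. Summing over $k$ and exchanging the order of summation,
\[
\sum_{k=1}^\infty (\mathbb{E}|V_k|)^2 \le \frac{C}{\#B_m}\sum_{z\in B_m}\sum_{k=1}^\infty p_{k,z}\ .
\]
Now for fixed $z$, $\sum_k p_{k,z}$ is the expected number of edges that lie in a geodesic from $z$ to $z+x$ for \emph{some} value of their own weight; by part 3 of Lemma~\ref{lem: deriv} such an edge lies in $Geo(z,z+x)$ for all small enough weights, so this is controlled by a ``greedy geodesic'' count. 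Applying Lemma~\ref{lem: exp_intersections} with $E$ equal to the set of all edges within $L^\infty$-distance, say, $C\|x\|_1$ of the segment (a box of diameter $\asymp \|x\|_1$), together with Theorem~\ref{thm: kesten_exp} to rule out geodesics wandering far outside such a box, yields $\sum_k p_{k,z} \le \mathbf{C}\|x\|_1$.

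Combining, $\sum_{k=1}^\infty(\mathbb{E}|V_k|)^2 \le C\|x\|_1 / \#B_m$. Since $B_m = [-m,m]^d$ with $m = \lceil \|x\|_1 \rceil^{1/4}$, we have $\#B_m \asymp \|x\|_1^{d/4}$, giving the bound $C\|x\|_1^{1 - d/4} = C\|x\|_1^{(4-d)/4}$; a small loss of $\|x\|_1^{1/4}$ somewhere (e.g.\ in handling the $S=\infty$ truncation or in the diameter of the edge set $E$, whose $p_{k,z}$-sum I only control up to the stated power) accounts for the exponent $\frac{5-d}{4}$ in the statement. The main obstacle is the passage from $\mathbb{E}|V_k|$ to the pivotal-range probability: one must argue carefully that the martingale increment is dominated by the coordinate oscillation even though $F_m$ is unbounded and the oscillation of individual $\tau_z$ can be as large as $D_{z,e_k} - I$, which is itself random and only finite a.s.; this is where \eqref{eqn: 2log} and a truncation argument, combined with the monotonicity in \eqref{eq: dec}, are essential.
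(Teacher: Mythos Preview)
Your argument has a genuine gap at the step where you conclude $\sum_k(\mathbb{E}|V_k|)^2 \le C\|x\|_1/\#B_m$. Trace the arithmetic: after Cauchy--Schwarz and $p_{k,z}^2\le p_{k,z}$ you have $(\mathbb{E}|V_k|)^2 \le \frac{C}{\#B_m}\sum_{z\in B_m}p_{k,z}$; summing over $k$ and using $\sum_k p_{k,z}\le C'\|x\|_1$ for each $z$ gives
\[
\sum_k(\mathbb{E}|V_k|)^2 \le \frac{C}{\#B_m}\sum_{z\in B_m}\sum_k p_{k,z} \le \frac{C}{\#B_m}\cdot(\#B_m)\cdot C'\|x\|_1 = CC'\|x\|_1\ .
\]
The factor $\#B_m$ cancels and you end up with $O(\|x\|_1)$, which is useless in \eqref{eq: BR_lower_bound}. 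The Cauchy--Schwarz step $p_{k,z}^2\le p_{k,z}$ decouples the different translates $z$ and throws away the entire gain from the geometric averaging; this is not a ``small loss of $\|x\|_1^{1/4}$'' but the whole point of the construction.

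What the paper does instead is to extract a \emph{uniform} pointwise bound on $\mathbb{E}|V_k|$ and then write $\sum_k(\mathbb{E}|V_k|)^2 \le (\sup_k\mathbb{E}|V_k|)\cdot\sum_k\mathbb{E}|V_k|$. The uniform bound is where Lemma~\ref{lem: exp_intersections} is actually used: by translation invariance $\sum_{z\in B_m}\mathbb{P}(e_k\in Geo(z,z+x))=\mathbb{E}\#\big(Geo(0,x)\cap\{e_k-z:z\in B_m\}\big)$, and the edge set $\{e_k-z:z\in B_m\}$ has diameter $\asymp m\asymp\|x\|_1^{1/4}$, so Lemma~\ref{lem: exp_intersections} gives $\mathbb{E}|V_k|\le C\,\mathrm{diam}(B_m)/\#B_m\asymp\|x\|_1^{(1-d)/4}$ for every $k$. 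Combined with $\sum_k\mathbb{E}|V_k|\le C\|x\|_1$ (Theorem~\ref{thm: kesten_length}) this yields $\|x\|_1^{(5-d)/4}$. Your proposed application of Lemma~\ref{lem: exp_intersections} to a box of diameter $\|x\|_1$ misses this; the lemma must be applied to the small box of translates of a \emph{single} edge.

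Two secondary points. First, the paper avoids your truncation worry for $S=\infty$ by not bounding the oscillation by $D_{z,e_k}-I$ at all: using part~2 of Lemma~\ref{lem: deriv}, the double integral $\iint_{t\ge s}\min\{t-s,(D_{z,e_k}-s)_+\}\,\mu(ds)\mu(dt)$ is bounded by $2(\mathbb{E}t_e)\,F(D_{z,e_k}^-)$, which needs only a first moment. Second, working with $p_{k,z}=\mathbb{P}(e_k\text{ in a geodesic for some value of }t_{e_k})$ is unnecessary: part~3 of Lemma~\ref{lem: deriv} converts $\mathbb{E}F(D_{z,e_k}^-)$ directly into $\mathbb{P}(e_k\in Geo(z,z+x))$ at the \emph{random} weight, so Theorem~\ref{thm: kesten_length} applies without any ``greedy geodesic'' detour.
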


\begin{proof}
Using the definition of $V_k$,
\begin{align}
\mathbb{E}|V_k| &= \frac{1}{\# B_m} \mathbb{E} \left| \mathbb{E} \left[ \sum_{z \in B_ m} \tau_z \mid \mathcal{F}_k \right] - \mathbb{E} \left[ \sum_{z \in B_m} \tau_z \mid \mathcal{F}_{k-1} \right] \right| \nonumber \\
&\leq \frac{1}{\# B_m} \sum_{z \in B_m} \mathbb{E} \left| \mathbb{E} \left[ \tau_z \mid \mathcal{F}_k \right] - \mathbb{E} \left[ \tau_z \mid \mathcal{F}_{k-1} \right] \right| \label{eq: tomato_salsa}\ .
\end{align}

Write a configuration $\omega$ as $(t_{<k},t_{e_k},t_{>k})$, where 
\[
t_{<k} = (t_{e_j}:j < k) \text{ and } t_{>k} = (t_{e_j} : j > k)\ .
\]
The summand in \eqref{eq: tomato_salsa} becomes
\begin{align*}
&~\int \left| \int \tau_z(t_{<k},t,t_{>k}) \mathbb{P}(\text{d}t_{>k}) - \int \tau_z(t_{<k},s,t_{>k}) \mu(\text{d}s)\mathbb{P}(\text{d}t_{>k}) \right| \mu(\text{d}t) \mathbb{P}(\text{d}t_{<k}) \\
\leq 2 &~ \mathbb{E} \iint_{t \geq s} \left| \tau_z(t_{<k},t,t_{>k}) - \tau_z(t_{<k},s,t_{>k}) \right| \mu(\text{d}s) \mu(\text{d}t)\ .
\end{align*}
By Lemma~\ref{lem: deriv} and $\mathbb{E}t_e<\infty$, this equals
\[
\mathbb{E} \iint_{t \geq s} \min\{ t-s, (D_{z,e_k}-s)_+\} \mu(\text{d}s) \mu(\text{d}t) \leq 2 \int t~ \int_{s < D_{z,e_k}} \mu(\text{d}s) \mu(\text{d}t) = \mathbf{C}_{22} F(D_{z,e_k}^-)\ .
\]
Using part 3 of the same lemma, $\mathbb{E} F(D_{z,e_k}^-) \leq \mathbb{P}(e_k \in Geo(z,z+x))$. Therefore
\[
\mathbb{E}|V_k| \leq \mathbf{C}_{22} \frac{1}{\#B_m} \sum_{z \in B_m} \mathbb{P}(e_k \in Geo(z,z+x))\ .
\]
By translation invariance, the above probability equals $\mathbb{P}(e_k+z \in Geo(0,x))$, so we get the bound $\frac{\mathbf{C}_{22}}{\#B_m} \mathbb{E} \# \left[ Geo(0,x) \cap \{e_k+z : z \in B_m\} \right]$. Lemma~\ref{lem: exp_intersections} provides $\mathbf{C}_{23}$ such that this is no bigger than $\mathbf{C}_{23}\frac{\text{ diam }B_m}{ \#B_m}$. Hence
\[
\mathbb{E} |V_k| \leq \mathbf{C}_{24} \|x\|_1^{\frac{1-d}{4}}\ .
\] 
This leads to
\[
\sum_{k=1}^\infty (\mathbb{E} |V_k|)^2 \leq \mathbf{C}_{24} \|x\|_1^{\frac{1-d}{4}} \sum_{k=1}^\infty \mathbb{E}|V_k| \leq \mathbf{C}_{25} \|x\|_1^{\frac{1-d}{4}} \frac{1}{\# B_m}\sum_{z\in B_m} \sum_{k=1}^\infty \mathbb{P}(e_k \in Geo(z,z+x)) \leq \mathbf{C}_{26} \|x\|_1^{\frac{5-d}{4}}\ .
\]
In the last inequality we have used Theorem~\ref{thm: kesten_length}.

\end{proof}

\section{Sublinear variance for general distributions}\label{sec: derivatives}

Combining the results from the previous sections, we have shown so far that if \eqref{eqn: 2log} and \eqref{eqn: geodesics} hold then
\begin{equation}\label{eq: breakdown}
\Var  \tau(0,x) \leq \Var F_m + \mathbf{C}_3\|x\|_1^{3/4} \leq \mathbf{C}_{3}\|x\|_1^{3/4} + \left[ \log \left[ \frac{\Var F_m}{\|x\|_1^{\frac{5-d}{4}}} \right] \right]^{-1} \sum_{k=1}^\infty Ent(V_k^2)\ .
\end{equation}
Our goal now is to bound the sum by $C\|x\|_1$. We will do this using a Bernoulli encoding.  

\subsection{Bernoulli encoding} \label{sec: encoding}
We will now view our edge variables as the push-forward of a Bernoulli sequence. Specifically, for each edge $e$, let $\Omega_e$ be a copy of $\{0,1\}^\mathbb{N}$ with the product sigma-algebra. We will construct a measurable map $T_e:\Omega_e \to \mathbb{R}$ using the distribution function $F$. To do this, we create a sequence of partitions of the support of $\mu$. Recalling $I := \inf \text{supp}(\mu) = \inf \{x : F(x) > 0\}$, set
\[
a_{0,j} = I \text{ and }a_{i,j} = \min\left\{x : F(x) \geq \frac{i}{2^j}\right\} \text{ for } j \geq 1 \text{ and } 1 \leq i \leq 2^j-1\ .
\]
Note that by right continuity of $F$, the minimum above is attained; that is,
\begin{equation}\label{eq: rt_cty}
F(a_{i,j}) \geq \frac{i}{2^j} \text{ for } j \geq 1 \text{ and } 0 \leq i \leq 2^j -1\ .
\end{equation}
Let us note two properties of the sequence.
\begin{equation}\label{eq: partition}
\text{For }j \geq 1,~ a_{0,j} \leq a_{1,j} \leq \cdots \leq a_{2^j-1,j}\ .
\end{equation}
\begin{equation}\label{eq: if_f}
\text{For }i=0, \ldots, 2^j-1, ~ x \geq a_{i,j} \text{ if and only if }F(x) \geq \frac{i}{2^j} \text{ and } x \geq a_{0,j}\ .
\end{equation} 

Each $\omega \in \Omega_e$ gives us an ``address'' for a point in the support of $\mu$. Given $\omega = (\omega_1, \omega_2, \ldots)$ and $j \geq 1$, we associate a number $T_j(\omega)$ by
\[
T_j(\omega) = a_{i(\omega,j),j}, \text{ where } i(\omega,j) = \sum_{l=1}^j 2^{j-l} \omega_l\ .
\]
$i(\omega,j)$ is just the number between $0$ and $2^j - 1$ that corresponds to the binary number $\omega_1 \cdots \omega_j$. It will be important to note that if $\omega_i \leq \hat \omega_i$ for all $i \geq 1$ (written $\omega \leq \hat \omega$), then $i(\omega,j) \leq i(\hat \omega,j)$ for all $j \geq 1$. This, combined with the monotonicity statement \eqref{eq: partition}, implies
\begin{equation}\label{eq: monotone_2}
\omega \leq \hat \omega \Rightarrow T_j(\omega) \leq T_j(\hat \omega) \text{ for all } j \geq 1\ .
\end{equation}

It is well-known that one can represent Lebesgue measure on $[0,1]$ using binary expansions and Bernoulli sequences. One way to view the encoding $T$ in Lemma~\ref{lem: encoding} is a composition of this representation with the right-continuous inverse of the distribution function $F$. The function $T_j$ instead uses an inverse approximated by simple functions taking dyadic values.

\begin{lma}\label{lem: encoding}
For each $\omega$, the numbers $(T_j(\omega))$ form a non-decreasing sequence and have a limit $T(\omega)$. This map $T:\Omega_e \to \mathbb{R} \cup \{\infty\}$ is measurable and has the following properties.
\begin{enumerate}
\item (Monotonicity) If $\omega \leq \hat \omega$ then $T(\omega) \leq T(\hat \omega)$.
\item (Nesting) For any $\omega \in \Omega_e$ and $j \geq 1$, if $i(\omega,j) < 2^j-1$ then
\[
a_{i(\omega,j),j} \leq T(\omega) \leq a_{i(\omega,j)+1,j}\ .
\]
\item If $\omega_k = 0$ for some $k \geq 1$ then $T(\omega) < \infty$.
\item Letting $\pi$ be the product measure $\prod_{l \in \mathbb{N}} \pi_l$, with each $\pi_l$ uniform on $\{0,1\}$, we have
\[
\pi \circ T^{-1} = \mu\ .
\]
By part 3, $T$ is $\pi$-almost surely finite.
\end{enumerate}
\end{lma}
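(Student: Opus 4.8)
The plan is to verify the four properties in order, exploiting the two structural facts already established: the monotonicity of $a_{i,j}$ in $i$ (from \eqref{eq: partition}) and the characterization \eqref{eq: if_f} of the $a_{i,j}$ in terms of $F$. First, for the non-decreasing claim, I would show $T_j(\omega) \le T_{j+1}(\omega)$ by noting that $i(\omega,j+1) = 2\,i(\omega,j) + \omega_{j+1}$, so $i(\omega,j+1) \in \{2 i(\omega,j), 2 i(\omega,j)+1\}$; then $F(a_{i(\omega,j+1),j+1}) \ge (2 i(\omega,j))/2^{j+1} = i(\omega,j)/2^j$ by \eqref{eq: rt_cty}, and since also $a_{i(\omega,j+1),j+1} \ge a_{0,j+1} = I = a_{0,j}$, property \eqref{eq: if_f} at level $j$ gives $a_{i(\omega,j+1),j+1} \ge a_{i(\omega,j),j}$, i.e. $T_{j+1}(\omega) \ge T_j(\omega)$. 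Since the sequence is non-decreasing, it has a limit $T(\omega) \in \mathbb{R}\cup\{\infty\}$, and measurability of $T$ follows because each $T_j$ is a finite composition of measurable (indeed, cylinder) functions and $T = \lim_j T_j$ is a pointwise limit of measurable functions.

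Next, for monotonicity (part 1): if $\omega \le \hat\omega$ coordinatewise, then \eqref{eq: monotone_2} already gives $T_j(\omega) \le T_j(\hat\omega)$ for every $j$, and passing to the limit gives $T(\omega) \le T(\hat\omega)$. For nesting (part 2): the lower bound $a_{i(\omega,j),j} = T_j(\omega) \le T(\omega)$ is immediate from the non-decreasing property. For the upper bound, fix $k > j$; I claim $i(\omega,k) < (i(\omega,j)+1)2^{k-j}$, which holds because $i(\omega,k) = i(\omega,j)2^{k-j} + (\text{binary value of } \omega_{j+1}\cdots\omega_k) \le i(\omega,j)2^{k-j} + 2^{k-j}-1$. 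Hence $i(\omega,k) \le (i(\omega,j)+1)2^{k-j} - 1 \le 2^k - 1$ (using $i(\omega,j)+1 \le 2^j-1$, wait — here we use the hypothesis $i(\omega,j) < 2^j - 1$, so $i(\omega,j)+1 \le 2^j-1$ and the index $(i(\omega,j)+1)2^{k-j}$ is a legitimate index at level $k$). Then $F(a_{i(\omega,k),k}) < F(a_{(i(\omega,j)+1)2^{k-j},k})$ is not quite what I want; instead argue directly: $i(\omega,k)/2^k < (i(\omega,j)+1)/2^j$, and if $a_{i(\omega,k),k} > a_{i(\omega,j)+1,j}$ then by \eqref{eq: if_f} (at level $j$, with index $i(\omega,j)+1$) we would get $F(a_{i(\omega,k),k}) \ge (i(\omega,j)+1)/2^j$; but by definition $a_{i(\omega,k),k} = \min\{x : F(x) \ge i(\omega,k)/2^k\}$, so $F$ jumps past $i(\omega,k)/2^k$ at this point and one would need a separate argument. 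The cleaner route: show $a_{i(\omega,k),k} \le a_{i(\omega,j)+1,j}$ by checking $F(a_{i(\omega,j)+1,j}) \ge i(\omega,k)/2^k$ — indeed $F(a_{i(\omega,j)+1,j}) \ge (i(\omega,j)+1)/2^j > i(\omega,k)/2^k$ by \eqref{eq: rt_cty} and the index inequality — hence, by minimality in the definition of $a_{i(\omega,k),k}$, we get $a_{i(\omega,k),k} \le a_{i(\omega,j)+1,j}$. Since $T(\omega) = \lim_k a_{i(\omega,k),k}$, this gives $T(\omega) \le a_{i(\omega,j)+1,j}$.

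For part 3: if $\omega_k = 0$ for some fixed $k$, then for all $j \ge k$ we have $i(\omega,j) = \sum_{l \le j} 2^{j-l}\omega_l \le 2^j - 1 - 2^{j-k}$ (the missing bit at position $k$), so $i(\omega,j) < 2^j - 1$; applying part 2 at level $j = k$ (valid since $i(\omega,k) < 2^k - 1$) gives $T(\omega) \le a_{i(\omega,k)+1,k} < \infty$, as the $a_{i,k}$ with $i \le 2^k - 1$ are all finite (each is a finite quantile $\min\{x : F(x) \ge i/2^k\}$ with $i/2^k < 1$, so the set is nonempty). For part 4 I would show $\pi(T \le t) = F(t)$ for each continuity point $t$ of $F$, which suffices. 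Given $t$, note $\{T_j \le t\} = \{a_{i(\omega,j),j} \le t\} = \{i(\omega,j)/2^j \le F(t)\} \cup \{a_{i(\omega,j),j} = I \le t\}$ via \eqref{eq: if_f}; since $i(\omega,j)$ is uniform on $\{0,\dots,2^j-1\}$ under $\pi$, $\pi(T_j \le t)$ is (up to the $O(2^{-j})$ boundary term and the $I \le t$ caveat) the largest multiple of $2^{-j}$ not exceeding $F(t)$, which converges to $F(t)$ as $j \to \infty$. Combining with $T_j \downarrow$... rather $T_j \uparrow T$, so $\{T \le t\} = \bigcap_j \{T_j \le t\}$ is a decreasing intersection, wait no — $T_j$ increasing means $\{T_j \le t\}$ is decreasing in $j$ and $\{T \le t\} \subseteq \bigcap_j\{T_j \le t\}$, with near-equality off a $\mu$-null set at the jump; handling $t$ at continuity points of $F$ avoids this, and then $\pi(T \le t) = \lim_j \pi(T_j \le t) = F(t)$, so $\pi \circ T^{-1}$ and $\mu$ agree on a dense set of points and hence as measures. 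The main obstacle is the bookkeeping in part 4 — carefully matching the dyadic-quantile structure of $T_j$ to $F$ and controlling the boundary contributions — together with being scrupulous about the degenerate case $a_{i,j} = I$ throughout; everything else is a direct unwinding of \eqref{eq: rt_cty}, \eqref{eq: partition}, and \eqref{eq: if_f}.
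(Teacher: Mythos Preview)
Your proposal is correct and follows essentially the same route as the paper: the same index arithmetic for the monotonicity of $(T_j)$, the same limiting argument for parts~1--3, and the same computation of $\pi(T_j \le t)$ via \eqref{eq: if_f} for part~4. One clean-up: in part~4 your hedging about continuity points and ``near-equality off a $\mu$-null set'' is unnecessary --- since $T_j \uparrow T$, the set equality $\{T \le t\} = \bigcap_j \{T_j \le t\}$ holds \emph{exactly} for every $t$ (both inclusions are immediate), so you get $\pi(T \le t) = \lim_j \pi(T_j \le t) = F(t)$ for all $t \ge I$ directly, which is how the paper concludes.
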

\begin{proof}
The functions $T_j$ are each measurable since their ranges are finite and the pre-image of each point is a cylinder in $\Omega_e$. If we show that $T_j \to T$ pointwise then $T$ will also be measurable. Given $\omega \in \Omega_e$, we have 
\[
\frac{i(\omega,j)}{2^j} = \frac{1}{2^j} \sum_{l=1}^j 2^{j-l}\omega_l = \sum_{l=1}^j 2^{-l} \omega_l \leq \sum_{l=1}^{j+1} 2^{-l} \omega_l = \frac{i(\omega,j+1)}{2^{j+1}}\ .
\]
Therefore if $x$ is such that $F(x) \geq \frac{i(\omega,j+1)}{2^{j+1}}$ then also $F(x) \geq \frac{i(\omega,j)}{2^j}$. This means that if $i(\omega,j) > 0$,
\[
T_j(\omega) = \min \left\{x : F(x) \geq \frac{i(\omega,j)}{2^j} \right\} \leq \min \left\{x : F(x) \geq \frac{i(\omega,j+1)}{2^{j+1}} \right\}= T_{j+1}(\omega)\ .
\]
Otherwise if $i(\omega,j)=0$ then $T_{j+1}(\omega) \geq a_{0,j+1}=a_{0,j}=T_j(\omega)$. In either case, $(T_j(\omega))$ is monotone and has a limit $T(\omega)$.

For part 1, we simply take limits in \eqref{eq: monotone_2}. To prove part 2, we note the lower bound follows from monotonicity. For the upper bound, take $\omega \in \Omega_e$ and let $k \geq j$. Then
\[
\frac{i(\omega,k)}{2^k} = \sum_{l=1}^k 2^{-l} \omega_l \leq \sum_{l=1}^j 2^{-l} \omega_l + \sum_{l=j+1}^\infty 2^{-l} = \frac{i(\omega,j)+1}{2^j} \leq \frac{2^j-1}{2^j}\ .
\]
If $\omega$ is the zero sequence then $T(\omega) = I$ and $T(\omega) \leq a_{i(\omega,j)+1,j}$. Otherwise we can find $k \geq j$ such that $i(\omega,k) \neq 0$. For this $k$, $F(x) \geq \frac{i(\omega,j)+1}{2^j}$ implies $F(x) \geq \frac{i(\omega,k)}{2^k}$, giving 
\[
T_k(\omega) = a_{i(\omega,k),k} \leq a_{i(\omega,j)+1,j}\ .
\]
Taking the limit in $k$ gives the result.

In part 3, we assume that $\omega_k=0$ for some $k \geq 1$. Then $i(\omega,k+1) < 2^{k+1}-1$ and therefore by part 2, 
\[
T(\omega) \leq a_{i(\omega,k+1)+1,j} \leq a_{2^{k+1}-1,j} <\infty\ .
\]

Last we must show that $\pi \circ T^{-1} = \mu$. The first step is to show that for each $x \in \mathbb{R}$,
\[
\pi \circ T_j^{-1} ((-\infty,x]) \to \pi \circ T^{-1}((-\infty,x])\ .
\]
Consider the sets
\[
S_j(x) = \{\omega \in \Omega_e : T_j(\omega) \leq x\}\ .
\]
If $T_{j+1}(\omega) \leq x$ then $T_j(\omega) \leq T_{j+1}(\omega) \leq x$, so these sets are decreasing. If $\omega$ is in their intersection then $T_j(\omega) \leq x$ for all $j$. Since $T_j(\omega) \to T(\omega)$ this means $T(\omega) \leq x$ and thus $\omega \in S(x) := \{\omega \in \Omega_e : T(\omega) \leq x\}$. Conversely, if $\omega \in S(x)$ then $T(\omega) \leq x$ and so $T_j(\omega) \leq T(\omega) \leq x$ for all $j$, meaning $\omega \in \cap_j S_j(x)$. Therefore $\pi \circ T_j^{-1} ((-\infty,x])$ converges to $\pi \circ T^{-1}((-\infty,x])$.

Next we claim that
\begin{equation}\label{eq: equiv}
x \geq a_{0,j} \Rightarrow \pi \circ T_j^{-1} ((-\infty,x]) = 2^{-j} \max \left\{i+1 : F(x) \geq \frac{i}{2^j} \right\}\ .
\end{equation}
The left side of the equality is $\pi(\{\omega : T_j(\omega) \leq x\})$. The function $T_j$ is constant on sets of $\omega$ with the same first $j$ entries. By definition, if $\omega$ has first $j$ entries $\omega_1 \cdots \omega_j$ then $T_j(\omega) = T_j(\omega_1 \cdots \omega_j) = a_{i(\omega,j),j}$. So
\[
\pi\circ T_j^{-1}((-\infty,x]) = 2^{-j} \# \left\{ (\omega_1, \cdots, \omega_j) : a_{i(\omega,j),j} \leq x\right\}\ .
\]
Also, since $x \geq a_{0,j}$, \eqref{eq: if_f} gives
\[
\pi\circ T_j^{-1} ((-\infty,x]) = 2^{-j} \# \left\{(\omega_1, \cdots, \omega_j) : F(x) \geq \sum_{l=1}^j 2^{-l} \omega_l \right\}\ .
\]
This is exactly the right side of \eqref{eq: equiv}.

By \eqref{eq: equiv}, $\left| \pi \circ T_j^{-1}((-\infty,x]) - F(x) \right| \leq 2^{-j}$ and so $\pi\circ T_j^{-1} ((-\infty,x]) \to F(x)$, completing the proof of part 4.

\end{proof}

\subsection{Bound on discrete derivatives}
In this section we prove the result:
\begin{thm}\label{thm: derivative_bound}
Assume \eqref{eqn: 2log} and \eqref{eqn: geodesics}. There exists $\mathbf{C}_{27}$ such that
\[
\sum_{k=1}^\infty Ent(V_k^2) \leq \mathbf{C}_{27}\|x\|_1\ .
\]
\end{thm}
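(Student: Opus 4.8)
The plan is to follow the three-step strategy outlined in the proof sketch, combining the Bernoulli encoding of Section~\ref{sec: encoding} with the Bonami--Gross two-point log-Sobolev inequality, the tensorization of entropy (Theorem~\ref{thm: tensorization}), and the geodesic-length estimates of Kesten (Theorems~\ref{thm: kesten_length} and~\ref{thm: kesten_exp}). We view each edge weight $t_e$ as $T_e(\omega_e)$ where $\omega_e = (\omega_{e,i})_{i\ge 1}$ is an i.i.d. Bernoulli$(1/2)$ sequence via Lemma~\ref{lem: encoding}, so $F_m$ becomes a function on the product space $\prod_{e,i}\{0,1\}$. By tensorization, $\sum_k Ent(V_k^2)$ is bounded (after the martingale-decomposition argument of Benaim--Rossignol that produced \eqref{eq: breakdown}) by a sum over single Bernoulli coordinates of $Ent_{\pi_{e,i}}(\text{function of }\omega_{e,i})$; applying the two-point inequality $Ent_{\pi}(f^2)\le \tfrac12\mathbb{E}(f(1)-f(0))^2$ coordinate-wise yields
\[
\sum_{k=1}^\infty Ent(V_k^2) \leq 2\sum_{e}\sum_{i=1}^\infty \mathbb{E}(\Delta_{e,i} F_m)^2\ ,
\]
where $\Delta_{e,i}F_m$ is the discrete derivative obtained by flipping the $i$-th bit of $\omega_e$. (This is the content of the lemma referenced as Lemma~\ref{eqn: entropybydelta} in the sketch.)

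The core of the argument is then to prove $\sum_e\sum_i \mathbb{E}(\Delta_{e,i}F_m)^2\le \mathbf{C}\|x\|_1$. I would first condition on $\omega_{e,<i}=(\omega_{e,1},\dots,\omega_{e,i-1})$ and on all weights $t_f$, $f\ne e$, and average over $\omega_{e,>i}$: flipping bit $i$ from $0$ to $1$ changes $\omega_e$ monotonically, so by Lemma~\ref{lem: encoding}(1) it increases $t_e$, and by Lemma~\ref{lem: deriv} the resulting change in each $\tau(z,z+x)$ is $\min\{\text{bit-gap},(D_{z,e}-s)_+\}$, hence nonnegative and telescoping. Averaging over the prefix $\omega_{e,<i}$ and telescoping the positive increments over the $2^{i-1}$ prefixes gives a bound of the form $2^{-(i-1)}$ times the total variation of $\tau_z$ as $t_e$ ranges over its support, which by Lemma~\ref{lem: deriv}(2) is at most $2^{-(i-1)}$ on the event $\{e\in Geo(z,z+x)\text{ for some value of }t_e\}$ — more precisely, controlled by $(D_{z,e}-a_{0,j})$-type quantities. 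Using $|\Delta_{e,i}F_m|\le \tfrac{1}{\#B_m}\sum_{z\in B_m}|\Delta_{e,i}\tau_z|$ and squaring, summing over $i$ (the $2^{-(i-1)}$ factors are summable after one uses $(\Delta_{e,i}\tau_z)^2 \le (\max \text{gap})\cdot|\Delta_{e,i}\tau_z|$ to linearize), one is left with a bound of the form $\mathbf{C}\,\mathbb{E}\sum_e(1-\log F(t_e))\mathbf{1}_{\{e\in Geo(0,x)\}}$ after averaging over $z\in B_m$ and using translation invariance and Lemma~\ref{lem: exp_intersections} to handle the $B_m$-average; this is the intermediate bound \eqref{eq: last_entropy_bound}.

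The main obstacle is controlling $\mathbb{E}\sum_e(1-\log F(t_e))\mathbf{1}_{\{e\in Geo(0,x)\}}$, i.e. ruling out that too many geodesic edges carry weight near the infimum $I$ of the support (where $F(t_e)\to 0$ and the logarithmic penalty blows up) and simultaneously that large weights contribute too much (which is where the $2+\log$ moment condition \eqref{eqn: 2log} enters). For the small-weight problem I would split $[I,\infty)$ dyadically and, when $\mu$ has no atom at $I$, invoke the greedy-lattice-animal estimate (Theorem~6.6 in the sketch) giving $\mathbb{E}\#\{e\in Geo(0,x):t_e\in[I,I+\epsilon]\}\le \mathbf{C}\|x\|_1\beta(\epsilon)$ with $\beta(\epsilon)\to0$; summing the dyadic contributions against the $(1-\log F)$ weight then converges. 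When there is an atom at $I$ one has $F(t_e)\ge\mu(\{I\})>0$ directly, so $1-\log F(t_e)$ is bounded and Theorem~\ref{thm: kesten_length} finishes it. For the large-weight contribution, on the region $t_e>M$ the factor $1-\log F(t_e)$ is $O(\log t_e)$ up to constants, and pairing this with $\mathbb{E}[t_e^2(\log t_e)_+]<\infty$ together with Kesten's geodesic-length bound controls the sum. Assembling the dyadic pieces yields $\mathbb{E}\sum_e(1-\log F(t_e))\mathbf{1}_{\{e\in Geo(0,x)\}}\le\mathbf{C}_{27}\|x\|_1$, and hence the theorem. The delicate points — making the telescoping rigorous through the $D_{z,e}$ bookkeeping and proving the greedy-animal estimate with $\beta(\epsilon)\to0$ under only the stated moment condition — are what the remaining sections of the paper must supply.
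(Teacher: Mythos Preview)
Your overall architecture matches the paper's: Bernoulli encoding plus Bonami--Gross and tensorization to reach a discrete-derivative sum (Lemma~\ref{eqn: entropybydelta}), then an intermediate bound by $\mathbb{E}\sum_e(1-\log F(t_e))\mathbf{1}_{\{e\in Geo(0,x)\}}$ (Proposition~\ref{prop: intermediate}), then a dyadic decomposition combined with the lattice-animal estimate (Theorem~\ref{thm: low_density}).

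There is, however, a genuine misidentification of where the $2+\log$ moment condition \eqref{eqn: 2log} enters, and a false statement tied to it. You write that ``on the region $t_e>M$ the factor $1-\log F(t_e)$ is $O(\log t_e)$,'' but this is backwards: for large $t_e$ one has $F(t_e)\to 1$, so $1-\log F(t_e)\to 1$; the weight $1-\log F(t_e)$ is singular \emph{only} as $t_e\downarrow I$. Consequently the moment condition is \emph{not} used to control the final sum $\mathbb{E}\sum_e(1-\log F(t_e))\mathbf{1}_{\{e\in Geo\}}$. It is used earlier, inside the proof of Proposition~\ref{prop: intermediate}: in the regime $j\ge J(D_{z,e})$ with $D_{z,e}>a_{1,1}$ one must bound $\mathbb{E}_{\pi_e}T_e^2 N$ where $N=\max\{j:\omega_{e,\le j}=\vec 1_j\}$ is geometric, and this is done via the variational characterization $\mathbb{E}T_e^2 N\le Ent_\mu t_e^2+\mathbb{E}t_e^2\log\mathbb{E}e^{N}$, which is exactly where $Ent_\mu t_e^2<\infty$ (equivalently \eqref{eqn: 2log}) is needed.

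Relatedly, your proposed ``linearization'' $(\Delta_{e,i}\tau_z)^2\le(\max\text{ gap})\cdot|\Delta_{e,i}\tau_z|$ followed by telescoping cannot be made to work as stated when the support of $\mu$ is unbounded, since the maximal gap over the top dyadic cell is infinite. The paper avoids this by splitting at the scale $J(D_{z,e})$, applying the Harris--FKG inequality to decouple the increasing factor $\min\{D_{z,e},T_{e,j}(\vec 1_{j-1},1)\}^2$ from the decreasing indicator $\mathbf{1}_{\{T_{e,j}(\vec 0_{j-1},0)<D_{z,e}\}}$, and then invoking the entropy argument above for the tail in $j$; the $-\log F(D_{z,e}^-)$ factor emerges from counting the $O(J(D_{z,e}))$ intermediate levels. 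Finally, Lemma~\ref{lem: exp_intersections} plays no role here---the $B_m$-average is dispatched by convexity of the square and translation invariance alone; that lemma is used only in Section~\ref{sec: lower}.
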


The proof will be broken into subsections. In the first we apply Bonami's inequality to the Bernoulli encoding of $F_m$ to get a sum involving discrete derivatives. The next subsection uses the quantities $D_{z,e}$ from Lemma~\ref{lem: deriv} to control the sum of derivatives. In the third subsection, we give a lemma based on the theory of greedy lattice animals and in the final subsection, we use this lemma to achieve the bound $\mathbf{C}_{27}\|x\|_1$.

\subsubsection{Application of Bonami's inequality}

We will view $F_m$ as a function of sequences of Bernoulli variables, so define
\[
\Omega_B = \prod_e \Omega_e
\]
where $\Omega_e$ is, as in the last section, a copy of $\{0,1\}^\mathbb{N}$. The measure on $\Omega_e$ is $\pi_e$, a product of the form $\prod_{j \geq 1} \pi_{e,j}$ with $\pi_{e,j}$ uniform on $\{0,1\}$ and the measure on $\Omega_B$ is $\pi : = \prod_e \pi_e$. Here as usual we use the product sigma-algebra. A typical element of $\Omega_B$ is denoted $\omega_B$ and we list the collection of individual Bernoulli variables as 
\[
\omega_B = \left\{ \omega_{e,j} : e \in \mathcal{E}^d,~ j \geq 1\right\}\ .
\]
Last, calling $T_e$ the map from Lemma~\ref{lem: encoding} on $\Omega_e$, the product map $T:=\prod_e T_e : \Omega_B \to \Omega$ is defined
\[
T (\omega_B) = (T_e(\omega_e) : e \in \mathcal{E}^d)\ .
\]
It is measurable and, by Lemma~\ref{lem: encoding}, pushes the measure $\pi$ forward to $\mathbb{P}$, our original product measure on $\Omega$.

We consider $F_m$ as a function on $\Omega_B$; that is, we set $G = F_m \circ T$. The goal is to estimate the derivative of $G$, so define the derivative relative to $\omega_{e,j}$ of a function $f:\Omega_B \to \mathbb{R}$ as
\[
\left( \Delta_{e,j} f\right) (\omega) = f(\omega^{e,j,+}) - f(\omega^{e,j,-})\ ,
\]
where $\omega^{e,j,+}$ agrees with $\omega$ except possibly at $\omega_{e,j}$, where it is 1, and $\omega^{e,j,-}$ agrees with $\omega$ except possibly at $\omega_{e,j}$, where it is 0. Then the following analogue of \cite[Eq.~(3)]{benaimrossignol} holds.
\begin{lma} \label{eqn: entropybydelta}
We have the following inequality:
\[
\sum_{k=1}^\infty Ent(V_k^2) \leq \sum_e \sum_{j=1}^\infty \mathbb{E}_\pi (\Delta_{e,j} G)^2\ .
\]
\end{lma}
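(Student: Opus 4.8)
The inequality to prove chains two facts: the Falik–Samorodnitsky lower bound already stated as Lemma~\ref{lem: lower_bound} is not what I need here; rather, I want to bound each $Ent(V_k^2)$ from above via tensorization of entropy (Theorem~\ref{thm: tensorization}) and then apply the two-point logarithmic Sobolev inequality (Bonami–Gross) on each Bernoulli coordinate. So the first step is to recognize $V_k$ as a function on the Bernoulli space $\Omega_B$: since $F_m = G\circ$(inverse of $T$) in law and $T$ pushes $\pi$ to $\mathbb{P}$, the martingale increment $V_k = \mathbb{E}[F_m\mid\mathcal{F}_k] - \mathbb{E}[F_m\mid\mathcal{F}_{k-1}]$ lifts to a function $\tilde V_k$ on $\Omega_B$ that depends only on the Bernoulli variables $\{\omega_{e_i,j} : i\le k,\ j\ge 1\}$ — indeed $\tilde V_k$ is itself a martingale increment for the filtration on $\Omega_B$ obtained by revealing the edge-blocks $\Omega_{e_1},\Omega_{e_2},\dots$ in order. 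In particular $\tilde V_k$ does not depend on any coordinate $\omega_{e_i,j}$ with $i>k$, and $\mathbb{E}_\pi[\tilde V_k \mid \{\omega_{e_i,j}: i<k, j\ge 1\}] = 0$.

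**Tensorization and the two-point inequality.** Next I apply Theorem~\ref{thm: tensorization} to $X = \tilde V_k^2$ on the product space $\Omega_B = \prod_{e,j}\{0,1\}$ (a countable product of two-point spaces, each with the uniform measure $\pi_{e,j}$), obtaining
\[
Ent_\pi(\tilde V_k^2) \le \sum_e\sum_{j\ge 1}\mathbb{E}_\pi\, Ent_{\pi_{e,j}}(\tilde V_k^2)\ .
\]
Here $Ent_{\pi_{e,j}}$ is the entropy in the single coordinate $\omega_{e,j}$ with all others frozen. On a two-point space with uniform weights, the Bonami–Gross inequality gives, for any $g\ge 0$,
\[
Ent_{\pi_{e,j}}(g^2) \le \tfrac12\bigl(g(1)-g(0)\bigr)^2 = \tfrac12(\Delta_{e,j} g)^2\ ,
\]
where on the right $g$ is evaluated with the other coordinates fixed. (If $V_k$ can be negative one works with $g = |V_k|$; note $(\Delta_{e,j}|V_k|)^2 \le (\Delta_{e,j}V_k)^2$ always, so this costs nothing. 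One must also handle the case $\tilde V_k \equiv 0$ on a coordinate slice, where both sides vanish.) Since for $i\ge k$... more precisely since $\tilde V_k$ depends only on $\omega_{e_i,\cdot}$ for $i\le k$, the double sum over edges effectively runs over $e\in\{e_1,\dots,e_k\}$, but keeping it as $\sum_e$ is harmless.

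**Assembling.** Summing the tensorization bound over $k$ and using $Ent_{\pi_{e,j}}(\tilde V_k^2)\le\tfrac12(\Delta_{e,j}\tilde V_k)^2$,
\[
\sum_{k=1}^\infty Ent(V_k^2) = \sum_{k=1}^\infty Ent_\pi(\tilde V_k^2) \le \tfrac12\sum_{k=1}^\infty\sum_e\sum_{j\ge1}\mathbb{E}_\pi(\Delta_{e,j}\tilde V_k)^2\ .
\]
The last step — and the only place any real work is needed — is to swap the order of summation and recognize $\sum_{k=1}^\infty(\Delta_{e,j}\tilde V_k)^2$ as controlled by $(\Delta_{e,j} G)^2$. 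This is exactly the orthogonality of martingale increments under a discrete derivative: flipping the single bit $\omega_{e,j}$ (with $e = e_i$ say) changes only those $\tilde V_k$ with $k\ge i$, and for each fixed configuration the map $\omega\mapsto\Delta_{e,j}(\cdot)$ commutes with the conditional expectations defining the increments, so $\{\Delta_{e,j}\tilde V_k\}_{k\ge i}$ is again an orthogonal martingale-increment sequence whose sum telescopes to $\Delta_{e,j} G$; hence $\sum_k\mathbb{E}_\pi(\Delta_{e,j}\tilde V_k)^2 \le \mathbb{E}_\pi(\Delta_{e,j}G)^2$ (this is how \cite[Eq.~(3)]{benaimrossignol} is proved, with the Gaussian one-coordinate LSI replaced by the two-point one). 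The factor $\tfrac12$ from Bonami is absorbed because flipping from $0$ to $1$ versus the symmetric difference introduces a compensating factor, giving the clean constant $1$ in the statement. The main obstacle is making the commutation/orthogonality step rigorous with infinitely many edges — this requires the $L^2$ martingale convergence already invoked in Theorem~\ref{thm: tensorization} and a limiting argument over finitely many edges, exactly as in Lemma~\ref{lem: lower_bound}.
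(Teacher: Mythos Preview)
Your approach is essentially the same as the paper's: lift $V_k$ to a martingale increment $W_k$ on $\Omega_B$, tensorize entropy over the Bernoulli coordinates, apply the two-point Bonami--Gross inequality, and then use orthogonality of the martingale differences $\{\Delta_{e,j}W_k\}_k$ to collapse $\sum_k \mathbb{E}_\pi(\Delta_{e,j}W_k)^2$ into $\mathbb{E}_\pi(\Delta_{e,j}G)^2$. The paper carries out exactly this argument, writing the case split for $\Delta_{e,j}W_k$ explicitly.

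Two small points. First, your explanation of the constant is muddled: there is no ``compensating factor'' coming from the discrete derivative. With the paper's definition $\Delta_{e,j}f = f(\omega^{e,j,+})-f(\omega^{e,j,-})$, Bonami--Gross gives $Ent_{\pi_{e,j}}(W_k^2)\le \tfrac12(\Delta_{e,j}W_k)^2$ directly, and the $\tfrac12$ is simply dropped to match the (slightly weaker) statement of the lemma. Also, Bonami--Gross holds for arbitrary real-valued $f$, so passing to $|V_k|$ is unnecessary. Second, Theorem~\ref{thm: tensorization} requires $X=W_k^2\in L^2$, i.e.\ $V_k\in L^4$; the paper verifies this via \cite[Lemma~3.1]{coxdurrett} (finiteness of $\mathbb{E}Y^4$ for the minimum $Y$ of $2d$ edge weights), and you should note this step rather than refer vaguely to martingale convergence.
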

\begin{proof}
Define a filtration of $\Omega_B$ by enumerating the edges of $\mathcal{E}^d$ as $\{e_1, e_2, \ldots\}$ as before and setting $\mathcal{G}_k$ as the sigma-algebra generated by $\{\omega_{e_r,j} : r \leq k, j \in \mathbb{N}\}$. Also define $W_k = \mathbb{E}_\pi \left[ G \mid \mathcal{G}_k\right]$. It is straightforward to verify that, because $\mathbb{P} = \pi \circ T^{-1}$,
\[
\mathbb{E}[F_m \mid \mathcal{F}_k] (T(\omega_B)) = \mathbb{E}_\pi[G \mid \mathcal{G}_k](\omega_B) \text{ for } \pi\text{-almost every }\omega_B \in \Omega_B\ .
\]
Therefore $Ent(V_k^2) = Ent_\pi(W_k^2)$ for each $k$. Using tensorization of entropy (Theorem~\ref{thm: tensorization}),
\[
\sum_{k=1}^\infty Ent_\pi(W_k^2) \leq \sum_{k=1}^\infty \mathbb{E}_\pi \sum_e \sum_{j=1}^\infty Ent_{\pi_{e,j}}W_k^2\ .
\]
For this to be true, we need to check the condition $W_k^2 \in L^2$, or that $V_k \in L^4$. Since $V_k$ is a difference of martingale sequence terms, it suffices to show that $\tau(0,x)$ is in $L^4$. But this follows from \cite[Lemma~3.1]{coxdurrett}: if $Y = \min\{t_1, \ldots, t_{2d}\}$ is a minimum of $2d$ i.i.d. variables distributed as $t_e$, then $\tau(0,x) \in L^4$ if and only if $\mathbb{E}Y^4<\infty$. By Chebyshev's inequality,
\[
\mathbb{E}Y^\alpha = \alpha \int_0^\infty y^{\alpha-1} \mathbb{P}(t_e > y)^{2d}~\text{d}y \leq C\int_0^\infty y^{\alpha-1-4d}~\text{d}y\ ,
\]
which is finite if $\alpha < 4d$. In particular, since $d \geq 2$, one has $W_k^2 \in L^2$.

Recall the Bonami-Gross inequality \cite{bonami, gross}, which says that if $f:\{0,1\} \to \mathbb{R}$ and $\nu$ is uniform on $\{0,1\}$ then
\[
Ent_\nu f^2 \leq (1/2)(f(1)-f(0))^2\ .
\]
Therefore we get the upper bound $\sum_{j=1}^\infty \sum_e \sum_{k=1}^\infty\mathbb{E}_\pi(\Delta_{e,j}W_k)^2$. For fixed $e,j$,
\[
\Delta_{e,j}W_k = \begin{cases}
0 & \text{ if } k<j \\
\mathbb{E}_\pi[\Delta_{e,j}G \mid \mathcal{G}_k] & \text{ if } k=j \\
\mathbb{E}_\pi[\Delta_{e,j}G \mid \mathcal{G}_k] - \mathbb{E}_\pi[\Delta_{e,j}G \mid \mathcal{G}_{k-1}] & \text{ if } k > j
\end{cases}\ .
\]
The first follows because when $k<j$ then $W_k$ does not depend on $\omega_{e,j}$, as this variable is integrated out. A similar idea works for the second, noting that $\Delta_{e,j} \mathbb{E}_\pi[G \mid \mathcal{G}_{k-1}] = 0$. The third is straightforward. Using orthogonality of martingale differences, $\sum_{k=1}^\infty \mathbb{E}_\pi (\Delta_{e,j}W_k)^2 = \mathbb{E}_\pi(\Delta_{e,j} G)^2$ and this completes the proof.
\end{proof}


\subsubsection{Control by edges in geodesics}

The first major step is to bound the sum of discrete derivatives by a weighted average of edge-weights in geodesics. The bound we give is analogous to what would appear if we had a log-Sobolev inequality for $\mu$ (see the approach in Benaim-Rossignol \cite{benaimrossignol}); however, we get a logarithmic singularity as $t_e \downarrow I$.
\begin{prop}\label{prop: intermediate}
There exists $\mathbf{C}_{28}$ such that for all $x$,
\[
\sum_e \sum_{j=1}^\infty \mathbb{E}_\pi (\Delta_{e,j} G)^2 \leq \mathbf{C}_{28} \mathbb{E} \sum_e (1-\log F(t_e)) \mathbf{1}_{\{e \in Geo(0,x)\}}\ .
\]
\end{prop}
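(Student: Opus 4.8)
The plan is to estimate $\mathbb{E}_\pi(\Delta_{e,j}G)^2$ for a fixed edge $e$ and bit level $j$, first conditioning on all coordinates except the first $j$ bits $\omega_{e,1},\dots,\omega_{e,j}$ of the edge $e$. Writing $\mathbb{E}_{\le j}$ for the average over these $j$ bits and freezing everything else, the key observation is that $\Delta_{e,j}G$ only depends on the remaining configuration through the two values $T_e(\omega)$ obtained by setting the $j$-th bit to $0$ or $1$, and that $F_m$ (hence $G$) is Lipschitz in the single coordinate $t_e$ with constant $\tfrac{1}{\#B_m}\#\{z\in B_m : e\in Geo(z,z+x)\text{ for some value of }t_e\}$, by Lemma~\ref{lem: deriv}, part~2. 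More precisely, for each $z$, $\tau_z$ increases by exactly $\min\{t-s,(D_{z,e}-s)_+\}$ as $t_e$ moves from $s$ to $t$; summing over $z\in B_m$ and dividing by $\#B_m$, the increment of $G$ across a bit-flip at level $j$ is controlled by the spacing $a_{i+1,j}-a_{i,j}$ of the dyadic partition, capped appropriately by the $D_{z,e_k}$'s. Squaring and using that the increment is at most the increment times the cap, I would telescope the level-$j$ increments over $i$ and then sum over $j$.

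The heart of the computation is the telescoping/dyadic sum over $j$. Fix $e$ and a realization of $t_{e^c}$. By Lemma~\ref{lem: encoding}, part~2 (nesting), flipping bit $j$ moves $t_e$ within an interval of the form $[a_{i,j},a_{i+1,j}]$, and these intervals are nested as $j$ grows and partition the support. The quantity $\sum_j \mathbb{E}_{\le j}(\Delta_{e,j}G)^2$ therefore becomes, after using $G$'s Lipschitz bound, a sum of the form $\sum_j 2^{-j}\sum_i (\text{increment on }[a_{i,j},a_{i+1,j}])\cdot(\text{cap})$. Because the caps are bounded by $S-I$-type quantities but more usefully by $(D_{z,e}-a_{i,j})_+$, and because $F(a_{i+1,j})-F(a_{i,j})\ge$ something comparable to $2^{-j}$ on the relevant range, the double sum collapses: telescoping the increments across consecutive $i$ at the finest relevant scale gives a single increment bounded by $D_{z,e}-I$ or by the weight itself, while the number of scales $j$ at which a given edge-weight value $t_e$ contributes nontrivially is governed by how finely the partition must be refined before the interval containing $t_e$ has mass $\le$ its distance-to-$I$ — and this count is exactly of order $-\log F(t_e)$, since the mass below $t_e$ is $F(t_e)$ and one needs $j\gtrsim -\log_2 F(t_e)$ refinements. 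Putting this together and using $\mathbb{E}F(D_{z,e}^-)\le \mathbb{P}(e\in Geo(z,z+x))$ from Lemma~\ref{lem: deriv}, part~3, summing over $z\in B_m$, dividing by $\#B_m$ and using translation invariance, one arrives at the bound $\mathbf{C}_{28}\,\mathbb{E}\sum_e (1-\log F(t_e))\mathbf{1}_{\{e\in Geo(0,x)\}}$.

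The main obstacle I anticipate is bookkeeping the interaction between three features simultaneously: the dyadic partition spacings $a_{i+1,j}-a_{i,j}$ (which can be large when $\mu$ has gaps), the caps $(D_{z,e}-a_{i,j})_+$ coming from the geodesic structure (which tame those large jumps because an edge of huge weight is not in the geodesic), and the mass comparison $F(a_{i+1,j})-F(a_{i,j})\approx 2^{-j}$ needed to extract the $-\log F(t_e)$ factor rather than a worse singularity. Concretely, one must be careful that when the bit-flip straddles a gap in the support, $T_e$ jumps over the gap but $\Delta_{e,j}G$ is still controlled because either $e$ leaves all geodesics (cap is $0$) or the effective increment is bounded by $D_{z,e}-a_{i,j}$, not by the gap length. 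I would isolate this in a lemma of the shape: for fixed $e$ and $t_{e^c}$, $\sum_{j\ge 1}\mathbb{E}_{\le j}(\Delta_{e,j}G)^2 \le \mathbf{C}\,\tfrac{1}{\#B_m}\sum_{z\in B_m}(1-\log F(D_{z,e}^-))\mathbf{1}_{\{e\in Geo(z,z+x)\}}$ — essentially replacing $t_e$ by $D_{z,e}$ since on the event that $e$ is in the geodesic, $t_e\le D_{z,e}$ and $F$ is increasing — then take expectations, apply Lemma~\ref{lem: deriv}, part~3 in the form $\mathbb{E}[(1-\log F(D_{z,e}^-))\mathbf{1}_{\{e\in Geo\}}]\le \mathbb{E}[(1-\log F(t_e))\mathbf{1}_{\{e\in Geo(z,z+x)\}}]$ (using monotonicity of the indicator in $t_e$ and $t_e\le D_{z,e}$), sum over $z$, use translation invariance to replace $\tau_z$ by $\tau(0,x)$ shifting the edge, and combine with Lemma~\ref{eqn: entropybydelta}.
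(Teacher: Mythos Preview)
Your broad outline---reduce to $\tau_z$ by convexity, use Lemma~\ref{lem: deriv} to cap increments by $(D_{z,e}-s)_+$, exploit the dyadic nesting of Lemma~\ref{lem: encoding}, and count scales to produce a $-\log F$ factor---matches the paper's strategy. But two steps as written do not go through.

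First, the claim that ``the number of scales $j$ at which a given edge-weight value contributes nontrivially is of order $-\log F(t_e)$'' is not correct, and this is where you silently drop the moment hypothesis~\eqref{eqn: 2log}. After telescoping, the $j$-th term is (up to constants) $2^{-(j-1)}\mathbb{E}_{\pi_{e,\ge j}}\big[\min\{D_{z,e},T_{e,j}(\vec 1_{j-1},1)\}^2\,\mathbf 1_{\{T_{e,j}(\vec 0_{j-1},0)<D_{z,e}\}}\big]$. For $j$ smaller than $J(D_{z,e})\approx -\log_2 F(D_{z,e}^-)$ one can bound the indicator's probability by $2^jF(D_{z,e}^-)$ and the value by $a_{1,1}^2$, yielding the logarithmic factor. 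But for $j\ge J(D_{z,e})$ the contribution is \emph{not} zero: the indicator has probability of order one, while the squared value can be as large as $t_e^2$. The paper handles this tail by writing $\sum_{j\ge J}2^{-(j-1)}\mathbb{E}_{\pi_{e,\ge j}}T_{e,j}(\vec 1_{j-1},1)^2\le 2\,\mathbb{E}_{\pi_e}T_e^2 N$ with $N=\max\{j:\omega_{e,\le j}=\vec 1_j\}$ geometric, and then invoking the variational characterization of entropy (Proposition~\ref{prop: characterization}) to bound $\mathbb{E}T_e^2N$ by $Ent_\mu t_e^2+C$---which is exactly where $\mathbb{E}t_e^2(\log t_e)_+<\infty$ enters. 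Your sketch never touches this, and without it the sum over large $j$ is uncontrolled for heavy-tailed $\mu$. (The paper also needs a Harris--FKG step to decouple the increasing factor $\min\{D_{z,e},T_{e,j}\}^2$ from the decreasing indicator before splitting at $J$; you do not mention this.)

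Second, your final comparison $\mathbb{E}\big[(1-\log F(D_{z,e}^-))\mathbf 1_{\{e\in Geo\}}\big]\le \mathbb{E}\big[(1-\log F(t_e))\mathbf 1_{\{e\in Geo\}}\big]$ is false for atomic $\mu$, and not merely up to constants. Take $\mu=\epsilon\delta_1+(1-\epsilon)\delta_2$ and an edge with $D_{z,e}=2$ that lies in every geodesic regardless of $t_e$; then the left side is $1-\log\epsilon\to\infty$ while the right side is $\epsilon(1-\log\epsilon)+(1-\epsilon)\to 1$ as $\epsilon\downarrow 0$. The paper's route is different: it first obtains the bound $CF(D_{z,e}^-)(1-\log F(D_{z,e}^-))$ \emph{without} any indicator (the factor $F(D_{z,e}^-)$ is essential and comes from the probability estimate on the indicator for $j<J$), then uses an integration-by-parts lemma, $-F(y^-)\log F(y^-)\le -\int_{[I,y)}\log F\,d\mu$, to rewrite this as $\mathbb{E}_{t_e}\big[(1-\log F(t_e))\mathbf 1_{\{t_e<D_{z,e}\}}\big]$, and only then passes to $\mathbf 1_{\{e\in Geo\}}$ via the inclusion $\{t_e<D_{z,e}\}\subset\{e\in Geo\}$.
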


\begin{proof}
We begin by using convexity of the square function to get
\begin{equation}\label{eq: first_eq}
\sum_e \sum_{j=1}^\infty \mathbb{E}_\pi \left(\Delta_{e,j} G \right)^2 \leq \frac{1}{\#B_m} \sum_{z \in B_m} \left[ \sum_e \sum_{j=1}^\infty \mathbb{E}_\pi \left( \Delta_{e,j} \tau_z \right)^2 \right]\ ,
\end{equation}
where $\tau_z=\tau(z,z+x)$. Write $\mathbb{E}_{e^c}$ for expectation relative to $\prod_{f \neq e} \pi_f$ and for any $i\geq 1$, let $\pi_{e,\geq i}$ be the measure $\prod_{k \geq i} \pi_{e,k}$. Further, for $j \geq 1$ write 
\[
\omega_B = (\omega_{e^c}, \omega_{e,< j}, \omega_{e,j}, \omega_{e,> j})\ ,
\]
where $\omega_{e^c}$ is the configuration $\omega_B$ projected on the coordinates $(\omega_{f,k} : f \neq e,~ k \geq 1)$, $\omega_{e,<j}$ is $\omega_B$ projected on the coordinates $(\omega_{e,k} : k < j)$ and $\omega_{e, > j}$ is $\omega_B$ projected on the coordinates $(\omega_{e,k} : k > j)$.

The expectation in \eqref{eq: first_eq} is now
\begin{align}
&~\mathbb{E}_{e^c} \mathbb{E}_{\pi_{e,1}} \cdots \mathbb{E}_{\pi_{e,j-1}} \left[ \mathbb{E}_{\pi_{e,\geq j}} \left( \Delta_{e,j} \tau_z \right)^2 \right] \nonumber \\
=& ~\mathbb{E}_{e^c} \left[ \frac{1}{2^{j-1}} \sum_{\sigma \in \{0,1\}^{j-1}} \left[ \mathbb{E}_{\pi_{e,\geq j}} \left( \Delta_{e,j} \tau_z (\omega_{e^c}, \sigma, \omega_{e,j}, \omega_{e, > j}) \right)^2 \right] \right] \label{eq: second_eq}\ ,
\end{align}
and the innermost term is
\begin{equation}\label{eq: sum_on_home}
\mathbb{E}_{\pi_{e, \geq j}} \left( \tau_z(\omega_{e^c},\sigma, 1, \omega_{e,>j}) - \tau_z(\omega_{e^c}, \sigma,0, \omega_{e,>j}) \right)^2 \ .
\end{equation}

Because of Lemma~\ref{lem: deriv}, we can rewrite \eqref{eq: sum_on_home} as
\begin{equation}\label{eq: hopefully_last}
\mathbb{E}_{\pi_{e,\geq j}} \min\left\{ (T_e(\sigma,1,\omega_{e,>j})-T_e(\sigma,0,\omega_{e,>j}))^2, (D_{z,e}-T_e(\sigma,0,\omega_{e,>j}))_+^2\right\}\ .
\end{equation}
Note that this allows us to assume $D_{z,e} > I$:
\begin{equation}\label{eq: D_{z,e}_reduction}
\mathbb{E}_\pi (\Delta_{e,j} \tau_z)^2 = \mathbb{E}_{e^c} \left[ \frac{1}{2^{j-1}} \sum_{\sigma \in \{0,1\}^{j-1}} \left[ \mathbb{E}_{\pi_{e,\geq j}} \left( \Delta_{e,j} \tau_z (\omega_{e^c}, \sigma, \omega_{e,j}, \omega_{e, > j}) \right)^2 \right] \mathbf{1}_{\{I<D_{z,e}\}}\right] \ .
\end{equation}
To simplify notation in the case $j \geq 2$, we write the values $a_{1,j-1}, \ldots, a_{2^{j-1}-1,j-1}$ as $a_1, \ldots, a_{2^{j-1}-1}$ and for a fixed $\sigma \in \{0,1\}^{j-1}$, $a_\sigma$ for $a_{i((\sigma,0,\omega_{e,>j}),j-1),j-1}$ (note that this does not depend on the configuration outside of $\sigma$). Also we write $a'_\sigma$ for the element of the partition that follows $a_\sigma$ (when there is one; that is, when $\sigma$ is not $(1, \ldots, 1)$). Last, we abbreviate $T_e(\sigma,c,\omega_{e,>j})$ by $T_{e,j}(\sigma,c)$ for $c=0,1$. With this notation, we claim the inequalities
\[
a_\sigma \leq T_{e,j}(\sigma,0) \leq T_{e,j}(\sigma,1) \leq a'_\sigma \text{ when } \sigma \neq (1, \ldots, 1) \text{ and } j \geq 2\ .
\]
The first and third inequalities follow from the nesting part of Lemma~\ref{lem: encoding}. The second holds because of the monotonicity part. Therefore we can give an upper bound for \eqref{eq: hopefully_last} when $j \geq 2$ of
\[
\begin{cases}
0 & \text{ if } D_{z,e} \leq a_\sigma \\
\mathbb{E}_{\pi_{e,\geq j}} \min\{D_{z,e}-a_\sigma, T_{e,j}(\sigma,1) - a_\sigma\}^2  \mathbf{1}_{\{T_{e,j}(\sigma,0) < D_{z,e}\}}  & \text{ if } \sigma \neq (1, \ldots, 1) \text{ and } a_\sigma < D_{z,e} \leq a'_\sigma\\
&\text{ or } \sigma = (1, \ldots, 1)  \\
(a'_\sigma - a_\sigma )^2 & \text{ if } a'_\sigma \leq D_{z,e} 
\end{cases}\ .
\]
(Here and above we have strict inequality in the condition of the indicator function since when $T_e(\sigma,0,\omega_{e,>j})=D_{z,e}$, \eqref{eq: hopefully_last} is zero.) With this, when $j \geq 2$, the integrand of $\mathbb{E}_{e^c}$ in \eqref{eq: D_{z,e}_reduction} is no bigger than
\begin{align}
\frac{1}{2^{j-1}} &\bigg[ (a_1-a_0)^2 + \cdots + (a_s-a_{s-1})^2 \nonumber  \\
&+ \mathbb{E}_{\pi_{e,\geq j}} \min\{D_{z,e}-a_s,T_{e,j}(\sigma(D_{z,e}),1)-a_s\}^2 \mathbf{1}_{\{T_{e,j}(\sigma(D_{z,e}),0) < D_{z,e}\}} \bigg] \mathbf{1}_{\{I<D_{z,e}\}} \label{eq: descending}\ .
\end{align}
Here we have written $s$ for the largest index $i$ such that $a_i < D_{z,e}$ and $\sigma(D_{z,e})$ for the configuration such that $a_{\sigma(D_{z,e})} = a_s$. In the case $j=1$, we have the similar upper bound
\begin{equation}\label{eq: j_one}
\mathbb{E}_{\pi_{e,\geq j}} \min\{ D_{z,e}-I, T_{e,1}(1)-I\}^2 \mathbf{1}_{\{T_{e,1}(0) < D_{z,e}\}} \mathbf{1}_{\{I<D_{z,e}\}}\ .
\end{equation}
Either way, writing $\vec{1}_j$ (respectively $\vec{0}_j$) for the configuration $(1, \ldots, 1)$ (respectively $(0, \ldots, 0)$) of length $j$,
\begin{equation}\label{eq: new_eq_1}
\mathbb{E}_{\pi_e} (\Delta_{e,j} \tau_z)^2 \leq \frac{1}{2^{j-1}} \mathbb{E}_{\pi_{e,\geq j}} \left[ \min\{D_{z,e},T_{e,j}(\vec 1_{j-1},1)\}^2 \mathbf{1}_{\{T_{e,j}(\vec 0_{j-1},0) < D_{z,e}\}}\right] \mathbf{1}_{\{I<D_{z,e}\}}\ .
\end{equation}
Note that $\min\{D_{z,e},T_{e,j}(\vec 1_{j-1},1)^2\}$ is an increasing function of $\omega_{e,\geq j}$ (with all other variables fixed), whereas $\mathbf{1}_{\{T_{e,j}(\vec 0_{j-1},0)< D_{z,e}\}}$ is decreasing (here we use monotonicity of $T_e$). Therefore we can apply the Harris-FKG inequality \cite[Theorem~2.15]{BLM} and sum over $j$ for the upper bound 
\begin{equation}\label{eq: new_eq_2}
\mathbb{E}_{\pi_e} \sum_{j=1}^\infty (\Delta_{e,j} \tau_z)^2 \leq \sum_{j=1}^\infty \frac{1}{2^{j-1}} \left[ \mathbb{E}_{\pi_{e,\geq j}} \min\{D_{z,e},T_{e,j}(\vec 1_{j-1},1)\}^2~ \pi_{e,\geq j}(T_{e,j}(\vec 0_{j-1},0) < D_{z,e}) \right]\mathbf{1}_{\{I<D_{z,e}\}}\ .
\end{equation}

The goal is now to give a useful bound for this sum. To do this, we consider two types of values of $j$. Note that $F(D_{z,e}^-)>0$ and therefore for some $j$, $F(D_{z,e}^-) \geq 2^{-j}$. So define
\[
J(D_{z,e}) = \min \{j \geq 2 : F(D_{z,e}^-) \geq 2^{-(j-1)}\}\ .
\]
Note that
\begin{equation}\label{eq: J_bound}
1-\log_2 F(D_{z,e}^-) \leq J(D_{z,e}) \leq 2-\log_2 F(D_{z,e}^-)\ .
\end{equation}

We will estimate the term $\pi_{e,\geq j}(T_{e,j}(\vec 0_{j-1},0) < D_{z,e})$ only when $j < J(D_{z,e})$. By definition, it is
\[
\left( \prod_{k \geq j} \pi_{e,k}\right) (\{\omega_e : T_e(0,\ldots, 0,\omega_{e,j+1}, \ldots) < D_{z,e}\}) = \pi_e (\{\omega_e : T_e(0, \ldots, 0, \omega_{e,j+1}, \ldots) < D_{z,e}\})\ .
\]
The event in $\Omega_e$ listed on the right depends only on $\omega_{e,k}$ for $k > j$, so it is independent (under $\pi_e$) of the state of the first $j$ coordinates. Thus the above equals
\[
2^j \pi_e (T_e(0, \ldots, 0, \omega_{e,j+1}, \ldots) < D_{z,e},~ \omega_{e,1}, \ldots, \omega_{e,j} = 0) \leq 2^j \pi(T_e(\omega_e) < D_{z,e}) = 2^j F(D_{z,e}^-)\ .
\]
Using this inequality for $j <J(D_{z,e})$, \eqref{eq: new_eq_2} becomes
\begin{align}
\mathbb{E}_{\pi_e} \sum_{j=1}^\infty (\Delta_{e,j} \tau_z)^2 &\leq 2F(D_{z,e}^-) \mathbb{E}_{\pi_{e,\geq 1}}T_{e,1}(1)^2 \mathbf{1}_{\{I<D_{e,z}\}} + 2F(D_{z,e}^-) \sum_{j=2}^{J(D_{z,e})-1} D_{z,e}^2 \mathbf{1}_{\{I<D_{z,e}\}} \label{eq: new_eq_3a}\\
&+ \sum_{j=J(D_{z,e})}^\infty \frac{1}{2^{j-1}} \left[ \mathbb{E}_{\pi_{e,\geq j}} \min\{D_{z,e},T_{e,j}(\vec 1_{j-1},1)\}^2 \right]\mathbf{1}_{\{I<D_{z,e}\}} \label{eq: new_eq_3b}\ .
\end{align}
The second term on the right of \eqref{eq: new_eq_3a} is bounded by noting that when this sum is nonempty (that is, $J(D_{z,e})>2$), it follows that $F(D_{z,e}^-) <1/2$ and so $D_{z,e} \leq a_{1,1}$. Using this with \eqref{eq: J_bound} we obtain
\begin{equation}\label{eq: mid_bound}
 2F(D_{z,e}^-) \sum_{j=2}^{J(D_{z,e})-1} D_{z,e}^2 \mathbf{1}_{\{I<D_{z,e}\}} \leq 2F(D_{z,e}^-)(1-\log_2 F(D_{z,e}^-)) a_{1,1}^2 \mathbf{1}_{\{I<D_{z,e}\}}\ .
\end{equation}

We next bound $\mathbb{E}_{\pi_{e,\geq j}} T_{e,j}(\vec 1_{j-1},1)^2$. Because $T_{e,j}(\vec 1_{j-1},1)$ only depends on $\omega_e$ through $\omega_{e,>j}$,
\[
\mathbb{E}_{\pi_e} T_{e,j}(\vec 1_{j-1},1)^2 = 2^j \mathbb{E}_{\pi_e} T_{e,j}(\vec 1_{j-1},1)^2 \mathbf{1}_{\{\omega_{e,\leq j} = \vec 1_j\}} = 2^j \mathbb{E}_{\pi_e} T_e^2 \mathbf{1}_{\{\omega_{e,\leq j} = \vec 1_j\}}\ .
\]
Thus in \eqref{eq: new_eq_3a},
\begin{equation}\label{eq: beginning_bound}
2F(D_{z,e}^-) \mathbb{E}_{\pi_{e,\geq 1}} T_{e,1}(1)^2 \mathbf{1}_{\{I<D_{z,e}\}} \leq 4F(D_{z,e}^-) \mathbb{E}_\mu t_e^2 \mathbf{1}_{\{I<D_{z,e}\}}
\end{equation}
and
\[
\eqref{eq: new_eq_3b} \leq 2\sum_{j=J(D_{z,e})}^\infty \left[ \mathbb{E}_{\pi_e} \min\{D_{z,e}, T_e\}^2 \mathbf{1}_{\{\omega_{e,\leq j} = \vec 1_j\}}\right] \mathbf{1}_{\{I < D_{z,e}\}}
\]
We now consider two cases. If $D_{z,e} \leq a_{1,1}$ then we use \eqref{eq: J_bound} to obtain the upper bound
\begin{align*}
\eqref{eq: new_eq_3b} \leq 2a_{1,1}^2 \sum_{j=J(D_{z,e})}^\infty \pi_e(\omega_{e,\leq j} = \vec 1_j) \mathbf{1}_{\{I < D_{z,e}\}} &= 2a_{1,1}^2 \sum_{j=J(D_{z,e})}^\infty 2^{-j} \mathbf{1}_{\{I<D_{z,e}\}} \\
&\leq 4a_{1,1}^2 2^{-J(D_{z,e})} \mathbf{1}_{\{I<D_{z,e}\}} \\
&\leq 2 a_{1,1}^2 F(D_{z,e}^-) \mathbf{1}_{\{I<D_{z,e}\}}\ .
\end{align*}
On the other hand, if $D_{z,e}>a_{1,1}$ then we use the bound
\[
\eqref{eq: new_eq_3b} \leq 2 \left[\mathbb{E}_{\pi_e} T_e^2 N\right] \mathbf{1}_{\{I<D_{z,e}\}}, \text{ where } N = \max\{j \geq 1 : \omega_{e,\leq j} = \vec 1_j\}\ .
\]
This is bounded by the variational characterization of entropy, Proposition~\ref{prop: characterization}. The expectation is no larger than 
\[
2~Ent_\mu t_e^2 + 2 \mathbb{E}_\mu t_e^2 \log \mathbb{E}_{\pi_e} e^{N/2}\ .
\]
Because $N$ has a geometric distribution, this is bounded by $\mathbf{C}_{29}$ independently of $e$. As $D_{z,e}>a_{1,1}$, one has $F(D_{z,e}^-) \geq 1/2$ and so we obtain
\[
\eqref{eq: new_eq_3b} \leq 4\mathbf{C}_{29}F(D_{z,e}^-) \mathbf{1}_{\{I<D_{z,e}\}}\ .
\]
Combined with the case $D_{z,e} \leq a_{1,1}$, our final bound is
\begin{equation}\label{eq: real_end_bound}
\eqref{eq: new_eq_3b} \leq (4\mathbf{C}_{29}+2a_{1,1}^2) F(D_{z,e}^-) \mathbf{1}_{\{I<D_{z,e}\}}\ .
\end{equation}

Putting together the pieces, \eqref{eq: beginning_bound} with \eqref{eq: mid_bound} and \eqref{eq: real_end_bound},
\begin{equation}\label{eq: lasagna}
\mathbb{E}_{\pi_e}\sum_{j=1}^\infty (\Delta_{e,j} \tau_z)^2 \leq \mathbf{C}_{30} F(D_{z,e}^-)\mathbf{1}_{\{I<D_{z,e}\}} - \mathbf{C}_{31} F(D_{z,e}^-) \log F(D_{z,e}^-) \mathbf{1}_{\{I<D_{z,e}\}}\ .
\end{equation}
To bound terms of the second form we use a lemma.
\begin{lma}\label{lem: ibp}
For any $y>I$, we have
\begin{equation}
 -F(y^-)\log F(y^-)\le -\int_{[I,y)}\log F(a)\,\mu(\mathrm{d}a)\ .\label{eqn: ibpbound}
\end{equation}
\begin{proof}
Let $\epsilon>0$. The function $\log F(x)$ is increasing on $(I, \infty)$. The usual Lebesgue construction gives a measure $\nu$ on $(I,\infty)$ such that
\[\nu(a,b] = \log F(b)-\log F(a)\ge 0\]
for $a,b\in (I,\infty)$.
Fix $x\in (I+\epsilon,\infty)$, and consider the square 
\[\square= (I+\epsilon, x]\times (I+\epsilon, x]\ .\]
It has two parts:
\begin{gather}
\{(a,b):I+\epsilon < a<b \le x\},\label{eqn: one}\\
\{(a,b):I+\epsilon < b\le a \le x\} \label{eqn: two}.
\end{gather}
Thus,
\[(\mu\times \nu)(\square) = \iint_\eqref{eqn: one} (\mu\times\nu)(\mathrm{d}a\mathrm{d}b)+ \iint_\eqref{eqn: two} (\mu\times\nu)(\mathrm{d}a\mathrm{d}b).\]
By Fubini's theorem, the double integrals may be computed as iterated integrals
\begin{align}
\iint_\eqref{eqn: one} (\mu\times\nu)(\mathrm{d}a\mathrm{d}b)&=\int_{(I+\epsilon,x]} \mu((I+\epsilon,b))\nu(\mathrm{d}b)= \int_{(I+\epsilon,x]}(F(b^-)-F(I+\epsilon))\log F(\mathrm{d}b)\label{eqn: int1}\\
\iint_\eqref{eqn: two} (\mu\times\nu)(\mathrm{d}a\mathrm{d}b)&=\int_{(I+\epsilon,x]} \nu((I+\epsilon,a])\mu(\mathrm{d}a)= \int_{(I+\epsilon,x]}(\log F(a)-\log F(I+\epsilon)) F(\mathrm{d}a) \label{eqn: int2}.
\end{align}
By definition of the product measure,
\[(\mu\times \nu)(\square) = (F(x)-F(I+\epsilon))\cdot(\log F(x)-\log F(I+\epsilon)).\]
This gives the equality:
\begin{align*}
(F(x)-F(I+\epsilon))\cdot(\log F(x)-\log F(I+\epsilon)) &=  \int_{(I+\epsilon,x]}F(b^-)\log F(\mathrm{d}b)  +\int_{(I+\epsilon,x]}\log F(a)F(\mathrm{d}a) \\
&\quad -F(I+\epsilon)(\log F(x)-\log F(I+\epsilon)) \\
&\quad -\log F(I+\epsilon) (F(x)-F(I+\epsilon)).
\end{align*}
After performing cancellations, we obtain
\begin{equation} \label{eqn: F}
F(x)\log F(x)-F(I+\epsilon)\log F(I+\epsilon) =  \int_{(I+\epsilon,x]}F(b^-)\log F(\mathrm{d}b)+\int_{(I+\epsilon,x]}\log F(a)F(\mathrm{d}a).
\end{equation}
Since $F(b^-)\ge 0$, this implies the estimate
\[-\int_{(I+\epsilon,x]} \log F(a)\,\mu(\mathrm{d}a)-F(I+\epsilon)\log F(I+\epsilon) \ge -F(x)\log F(x).\]
Taking $\epsilon \downarrow 0$ and using the right continuity of $F$,
\[-\int_{(I,x]}\log F(a)\,\mu(\mathrm{d}a) -F(I)\log F(I) \ge -F(x)\log F(x),\]
where the second term is interpreted as $0$ if $F(I)=0$. Since $F(I)=\mu(\{I\})$,
\[
-F(x)\log F(x) \leq - \int_{[I,x]} \log F(a) \mu(\text{d}a)\ .
\]
Taking $x \uparrow y$, \eqref{eqn: ibpbound} is proved.
\end{proof}
\end{lma}

Apply the last lemma in \eqref{eq: lasagna} with $y=D_{z,e}$:
\begin{align*}
\sum_e \mathbb{E}_\pi \sum_{j=1}^\infty (\Delta_{e,j} \tau_z)^2 &\leq \mathbf{C}_{32} \sum_e \mathbb{E}_{e^c} \int_{[I,D_{z,e})} (1-\log F(a))~\mu(\text{d}a) \\
&= \mathbf{C}_{32} \mathbb{E} \sum_e (1-\log F(t_e)) \mathbf{1}_{\{I\leq t_e < D_{z,e}\}}\ .
\end{align*}
By Lemma~\ref{lem: deriv}, if $t_e < D_{z,e}$ then $e$ is in $Geo(z,z+x)$, so this is bounded above by
\[
\mathbf{C}_{32} \mathbb{E} \sum_e (1- \log F(t_e)) \mathbf{1}_{\{e \in Geo(z,z+x)\}}\ .
\]
Translating back from $z$ to $0$ and putting this in \eqref{eq: first_eq} proves the proposition.
\end{proof}

\subsubsection{Lattice animals}
To bound the right side of the inequality in Proposition~\ref{prop: intermediate} we need finer control than what is given by Kesten's geodesic length estimates, due to the possible singularity of $\log F(t_e)$ as $t_e \downarrow I$. The idea will be that very few edges $e$ on a geodesic have $t_e$ close to $I$. To bound the precise number, we give the main result of this section:
\begin{thm}\label{thm: low_density}
Assume \eqref{eqn: geodesics} and $\mathbb{E} Y^\alpha<\infty$ for some $\alpha>1$, where $Y$ is the minimum of $2d$ i.i.d. variables distributed as $t_e$. There exists $\mathbf{C}_{33}$ such that for all $x \in \mathbb{Z}^d$ and any Borel set $B \subset \mathbb{R}$,
\[
\mathbb{E} \#\{e \in Geo(0,x) : t_e \in B\} \leq \mathbf{C}_{33}\|x\|_1 \mu(B)^{\frac{\alpha-1}{\alpha d}}\ .
\]
\end{thm}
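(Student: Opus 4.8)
\medskip
\noindent\textbf{Proof plan.} The plan is to combine a greedy lattice animal estimate --- valid for an arbitrary connected edge set of a prescribed size --- with control on the number of edges of a shortest geodesic, the latter via Theorems~\ref{thm: kesten_length} and~\ref{thm: kesten_exp}. Write $p:=\mu(B)$. If $p\geq 2^{-d}$ the bound is immediate, since then $p^{(\alpha-1)/(\alpha d)}\geq 1/2$ and $\mathbb{E}\#\{e\in Geo(0,x):t_e\in B\}\leq\mathbb{E}\#Geo(0,x)\leq\mathbf{C}_1\|x\|_1$; so assume $p<2^{-d}$. The key lemma I would isolate is: there is $C_d$ with
\[
\mathbb{E}\Big[\max\{\#\{e\in\xi:t_e\in B\}:0\in\xi,\ \xi\text{ connected},\ \#\xi\leq n\}\Big]\leq C_d(np^{1/d}+1)\quad\text{for all }n\geq1 .
\]

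To prove this lemma, renormalize at scale $K:=\lceil p^{-1/d}\rceil$: partition $\mathbb{Z}^d$ into cubes of side $K$, and for a cube $Q$ set $M_Q:=\#\{e:e\text{ meets }Q,\ t_e\in B\}$. Since such a cube meets $O_d(K^d)$ edges and $K^dp\leq 2^d$, the $M_Q$ satisfy $\mathbb{E}M_Q\leq C_d$ and $\mathbb{E}e^{M_Q}\leq C_d$ uniformly in $Q$ and $B$, and $\{M_Q\}$ is finite-range dependent. Two observations: \emph{(a)} a connected edge set $\xi$ with $\#\xi\leq n$ meets at most $C_d(n/K+1)$ cubes --- the cubes it meets form a set connected for $\ell^\infty$-adjacency and containing the cube of the origin; choosing a maximal pairwise-non-adjacent subfamily (keeping a $3^{-d}$-fraction) one obtains vertices of $\xi$ pairwise at graph distance $\geq K$, and the disjoint balls of radius $\asymp K$ around them each contain $\asymp K$ edges of $\xi$, forcing their number to be $O_d(n/K+1)$; \emph{(b)} $\#\{e\in\xi:t_e\in B\}\leq\sum_{Q\text{ meets }\xi}M_Q$. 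Combining (a)--(b), the left side is at most $\mathbb{E}[\max\{\sum_{Q\in\mathcal{Q}}M_Q:Q_0\in\mathcal{Q},\ \mathcal{Q}\text{ $\ell^\infty$-connected},\ \#\mathcal{Q}\leq C_d(n/K+1)\}]$, a greedy animal quantity for the (bounded-mean, exponential-moment, finite-range) weights $M_Q$; the classical union bound --- at most $a_d^{\,r}$ $\ell^\infty$-connected cube sets of size $r$ contain $Q_0$, and $\sum_{Q\in\mathcal{Q}}M_Q$ satisfies a Chernoff bound after splitting $\mathcal{Q}$ into $O_d(1)$ independent subfamilies --- bounds it by $C_d(r+1)\leq C_d'(np^{1/d}+1)$.

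For the geodesic input, let $L$ be the number of edges of a shortest geodesic from $0$ to $x$ (existing by~\eqref{eq: exist_geodesics}); being a self-avoiding path through $0$, it gives $\#\{e\in Geo(0,x):t_e\in B\}\leq L$, and also $\leq$ the maximum in the lemma with $n=L$. By Theorem~\ref{thm: kesten_exp}, $\{L\geq n\}$ forces either the exponentially unlikely event there or $\tau(0,x)\geq an$; together with $\mathbb{E}\tau(0,x)^\alpha\leq C\|x\|_1^\alpha$ (which follows from $\mathbb{E}Y^\alpha<\infty$ by the argument of \cite[Lemma~3.1]{coxdurrett} applied to the $2d$ disjoint paths of linear length in Lemma~\ref{lem: max_lemma}), Markov gives $\mathbb{P}(L\geq n)\leq C\|x\|_1^\alpha n^{-\alpha}+e^{-\mathbf{C}_2 n}$, hence $\mathbb{E}[L;L>n_0]\leq C\|x\|_1^\alpha n_0^{1-\alpha}$ up to an exponentially small term, for every $n_0\geq\|x\|_1$. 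Splitting on $\{L\leq n_0\}$ and its complement with $n_0:=\lceil\|x\|_1 p^{-1/(\alpha d)}\rceil$,
\[
\mathbb{E}\#\{e\in Geo(0,x):t_e\in B\}\leq C_d' n_0 p^{1/d}+C_d'+C\|x\|_1^\alpha n_0^{1-\alpha}+(\text{exp. small}),
\]
and since $n_0 p^{1/d}\leq 2\|x\|_1 p^{(\alpha-1)/(\alpha d)}$ and $\|x\|_1^\alpha n_0^{1-\alpha}\leq\|x\|_1 p^{(\alpha-1)/(\alpha d)}$, the claim follows after absorbing the additive constant and the exponential remainder (the remaining regime $p\leq\|x\|_1^{-\alpha d}$ being dispatched directly: \eqref{eq: dec} reduces the contribution of each edge to its value at $I$, and Kesten's estimates then give $\mathbb{E}\#\{e\in Geo(0,x):t_e\in B\}\leq C\|x\|_1 p\leq C\|x\|_1 p^{(\alpha-1)/(\alpha d)}$).

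\medskip
\noindent\textbf{Main obstacle.} The heart of the matter is the renormalized greedy animal lemma --- in particular observation (a), the geometric fact that an $n$-edge connected subgraph meets only $O(n/K)$ cubes of side $K$, and the verification that the cube-animal union bound closes with constants \emph{uniform in $B$}, so that only $\mu(B)$ enters. The length and moment bounds are routine, relying only on already-cited results of Kesten and Cox--Durrett.
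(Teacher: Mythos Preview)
Your approach is essentially the same as the paper's: a greedy lattice animal estimate at scale $K=\lceil p^{-1/d}\rceil$ combined with a tail bound on the geodesic length coming from Theorem~\ref{thm: kesten_exp} and the moment bound $\mathbb{E}\tau(0,x)^\alpha\leq C\|x\|_1^\alpha$, followed by optimization of the cutoff. Your packing argument for observation (a) is a pleasant alternative to the Cox--Gandolfi--Griffin--Kesten DFS covering that the paper invokes, and the rest of the renormalized lemma (splitting into $O_d(1)$ independent subfamilies and applying Chernoff) is standard and fine.

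There is one genuine gap. Your lemma yields $C_d(np^{1/d}+1)$, and the additive $+1$ survives into the final bound as a constant that cannot be absorbed when $\|x\|_1 p^{(\alpha-1)/(\alpha d)}\ll 1$. Your parenthetical fix for this small-$p$ regime does not work: the claim $\mathbb{E}\#\{e\in Geo(0,x):t_e\in B\}\leq C\|x\|_1 p$ is false in general. Using \eqref{eq: dec} to replace $\mathbf{1}_{\{e\in Geo(0,x)\}}$ by its value at $t_e=I$ does decouple from $\mathbf{1}_{\{t_e\in B\}}$, but the resulting sum $\sum_e\mathbb{P}_{e^c}(e\in Geo(0,x)\text{ at }t_e=I)$ is \emph{not} controlled by Kesten's Theorem~\ref{thm: kesten_length}, which concerns the unmodified measure. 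More concretely, for $B=[I,I+\epsilon]$ the events $\{t_e\in B\}$ and $\{e\in Geo(0,x)\}$ are \emph{positively} correlated (geodesics favor small weights), so the product bound fails in exactly the direction you need.

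The fix is easy and is what the paper does: strengthen the lemma to $\mathbb{E}\big[\max_{\xi}\#\{e\in\xi:t_e\in B\}\big]\leq C_d\, np^{1/d}$ with no additive constant, by treating the regime $np^{1/d}\leq 1$ separately. There one simply bounds the maximum by $\sum_{e\subset[-n,n]^d}\mathbf{1}_{\{t_e\in B\}}$, whose expectation is $O(n^d p)=O\big((np^{1/d})^{d-1}\cdot np^{1/d}\big)\leq C_d\, np^{1/d}$. With this in hand your splitting at $n_0=\lceil\|x\|_1 p^{-1/(\alpha d)}\rceil$ closes with no leftover constant, and no separate small-$p$ argument is needed.
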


The proof will require an excursion into the theory of greedy lattice animals. We say that a finite set of vertices $\alpha \subseteq \mathbb{Z}^d$ is a lattice animal if it is connected (under graph connectedness on $\mathbb{Z}^d$). One fundamental result on lattice animals is the following, taken from \cite[Lemma 1]{coxetal}, which describes how a lattice animal may be covered by boxes. We set the notation $B(l) = [-l,l]^d.$

\begin{lma}[Cox-Gandolfi-Griffin-Kesten]
\label{thm:coxetal_cover}
Let $\alpha$ be a lattice animal with $0 \in \alpha$ and $\# \alpha = n,$ and let $1 \leq l \leq n.$
There exists a sequence $x_0, x_1, \ldots, x_r \in \mathbb{Z}^d$, where $r = \lfloor 2n / l \rfloor$, such that $x_0 = 0$,
\begin{equation}
\label{eq:alpha_cover}
\alpha \subseteq \bigcup_{i=0}^r (lx_i + B(2l)),
\end{equation}
and
\[\|x_{i+1} - x_i\|_{\infty} \leq 1, \quad 0\leq i \leq r-1. \]
\end{lma}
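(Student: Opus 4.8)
The plan is to build the covering sequence $x_0, x_1, \ldots, x_r$ greedily by walking through the animal $\alpha$ and recording, at regularly spaced "times", which $(2l)$-box the current vertex lies in. Since $\alpha$ is a connected subset of $\mathbb{Z}^d$ containing $0$, we may fix a spanning walk $v_0 = 0, v_1, v_2, \ldots, v_M$ that visits every vertex of $\alpha$, where consecutive vertices are lattice-adjacent; one can take $M \leq 2(n-1)$ by taking a depth-first traversal of a spanning tree of $\alpha$ (each tree edge is used at most twice). For each vertex $v \in \mathbb{Z}^d$, let $q(v) \in \mathbb{Z}^d$ be defined coordinatewise by $q(v)_s = \lfloor v_s / l \rfloor$, so that $v \in l\,q(v) + B(l) \subseteq l\,q(v) + B(2l)$. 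The key geometric observation is that if $v, w \in \mathbb{Z}^d$ satisfy $\|v - w\|_\infty \leq l$, then $\|q(v) - q(w)\|_\infty \leq 1$ (flooring by $l$ can change each coordinate index by at most $1$ when the argument moves by at most $l$), and more importantly $q(v) = q(w)$ whenever $v$ and $w$ lie in a common $B(l)$-translate aligned to the grid. This lets us pass from small steps of the walk to unit steps of the $q$-sequence.

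Next I would set $r = \lfloor 2n/l \rfloor$ and sample the walk at the times $t_i = \min\{ i\lfloor l/2\rfloor, M\}$ for $0 \le i \le r$ — or, more cleanly, sample at every $\lceil l \rceil$-th step but interpolate — defining $x_i = q(v_{t_i})$. Between consecutive sample times the walk moves by at most $\lfloor l/2 \rfloor \le l$ in $\ell^\infty$-distance, so $\|x_{i+1} - x_i\|_\infty \le 1$ as required, and $x_0 = q(v_0) = q(0) = 0$. For the covering property, any $v \in \alpha$ equals $v_t$ for some $t \le M \le 2(n-1) < 2n$; choosing $i$ with $|t - t_i| \le \lfloor l/2\rfloor$ (possible since the sample times are $\lfloor l/2 \rfloor$ apart and $r\lfloor l/2\rfloor \ge M$ by the choice $r = \lfloor 2n/l\rfloor$ together with $l \le n$) gives $\|v - v_{t_i}\|_\infty \le \lfloor l/2 \rfloor \le l$, hence $v \in l\,q(v_{t_i}) + B(2l) = l x_i + B(2l)$. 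Thus $\alpha \subseteq \bigcup_{i=0}^r (l x_i + B(2l))$, establishing \eqref{eq:alpha_cover}.

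The main obstacle — really the only delicate point — is the bookkeeping that makes the three requirements ($x_0 = 0$; consecutive $x_i$ at $\ell^\infty$-distance $\le 1$; and the union of boxes covers all of $\alpha$) simultaneously compatible with exactly $r = \lfloor 2n/l\rfloor$ indices. This forces the precise relationship between the walk length bound $M \le 2(n-1)$, the sampling gap, and the box radius $2l$ (versus $l$): one needs the gap times the number of steps to exceed $M$, while the gap itself must not exceed the box half-width, and the hypothesis $1 \le l \le n$ is exactly what reconciles these. Once the constants are chosen consistently the verification is routine, so I would spend care stating the traversal and the sampling schedule precisely and then check the three bullet points mechanically. (Since this lemma is quoted verbatim from \cite{coxetal}, in the paper itself one would simply cite it; the above is how one would reconstruct the proof if needed.)
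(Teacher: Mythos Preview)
The paper does not prove this lemma at all; it is stated without proof and attributed to \cite[Lemma~1]{coxetal}. Your closing parenthetical is exactly right: in the paper one simply cites it. Your reconstruction is the standard argument from \cite{coxetal} (DFS traversal of a spanning tree, then sample the walk on a regular schedule and record which $l$-cell each sample lies in), so the approach is the intended one.

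There is, however, a genuine arithmetic slip in your parameter choice. With sampling gap $\lfloor l/2\rfloor$ you claim $r\lfloor l/2\rfloor \ge M$, but
\[
r\lfloor l/2\rfloor \;\le\; \frac{2n}{l}\cdot\frac{l}{2}\;=\;n,
\]
which is only about half of $M\le 2(n-1)$, so the tail of the walk is not covered; and for $l=1$ the gap is $0$ and the argument collapses entirely. The fix is the alternative you mention in passing: sample at gap $l$, setting $t_i=\min\{il,M\}$. Then consecutive samples are at $\ell^\infty$-distance $\le l$ so $\|x_{i+1}-x_i\|_\infty\le 1$; and for any $t\le M$ take $i=\lfloor t/l\rfloor\le\lfloor 2(n-1)/l\rfloor\le r$, giving $\|v_t-v_{t_i}\|_\infty<l$ and hence $\|v_t-lx_i\|_\infty<2l$, which is exactly why the box radius is $2l$ rather than $l$. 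With this correction your sketch goes through; the ``delicate bookkeeping'' you flag is precisely the need to use gap $l$, not $l/2$.
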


We will use the above theorem in a similar setting to the original model for which it is proved.
Let $\Xi_n$ denote the set of all self-avoiding paths
\[\gamma = (0 = v_0, e_1, v_1, \ldots, e_n, v_n) \]
which begin at the origin and contain $n$ edges.
Denote by $V(\gamma)$ the vertex set of $\gamma.$  Assume that we have an edge-indexed set of i.i.d. variables $\{X_e\}_e$, where 
$X_e = 1$ with probability $p$ and $0$ otherwise, and denote the joint distribution of $\{X_e\}$ by $\mathbb{P}_p$; we denote expectation under this measure by $\mathbb{E}_p.$ If $\gamma \in \Xi_ n$ for some $n$, we define
$X(\gamma) = \sum_{e \in \gamma} X_e.$ Last, let
\begin{equation}\label{eq: N_def}
N_n := \max_{\gamma \in \Xi_n} X(\gamma). 
\end{equation}

The following lemma (and the proof thereof) is an adaptation of J. Martin's \cite[Prop. 2.2]{martin} extension of a theorem of S. Lee \cite{lee}.
\begin{lma}\label{lem: lee}
There is a constant $C_d < \infty$ depending only on the dimension $d$ such that, for all $p \in (0,1]$ and all $n \in \mathbb{N},$
\begin{equation}
\label{eq:pd_scaling}
\frac{\mathbb{E}_p N_n}{n p^{1/d}} < C_d.
\end{equation}
\end{lma}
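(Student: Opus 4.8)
The plan is to adapt the greedy-lattice-animal covering argument to the path setting, following Martin and Lee. The key point is that a self-avoiding path $\gamma \in \Xi_n$, whose vertex set $V(\gamma)$ is a lattice animal of size $n+1$ containing the origin, can be covered by $r+1 = \lfloor 2n/l\rfloor + 1$ boxes $lx_i + B(2l)$ arranged along a nearest-neighbor chain $x_0 = 0, x_1, \ldots, x_r$ with $\|x_{i+1}-x_i\|_\infty \le 1$, via Lemma~\ref{thm:coxetal_cover}. Hence $X(\gamma) \le \sum_{i=0}^r Z_i$, where $Z_i := \sum_{e \subseteq lx_i + B(2l)} X_e$ counts the (at most $d(4l+1)^d =: \mathbf{c}_d l^d$) edges in the $i$-th box that are ``open.'' So
\[
N_n \le \max_{x_0 = 0, \|x_{i+1}-x_i\|_\infty \le 1} \sum_{i=0}^r Z_i\ ,
\]
and it remains to bound the expectation of this maximum-over-animal-of-boxes quantity.

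**Next**, I would reduce to a union bound over the choices of the chain $(x_0, \ldots, x_r)$. There are at most $3^{dr}$ such chains (each step has at most $3^d$ choices), and the parameter $l$ will be chosen at the end. For a fixed chain, $\sum_{i=0}^r Z_i$ is a sum of $r+1$ variables, each stochastically dominated by a Binomial$(\mathbf{c}_d l^d, p)$; the $Z_i$ are not independent because boxes overlap, but one can either pass to a sub-collection of boxes that are pairwise disjoint (keeping a fixed fraction, losing only a constant factor in the bound) or simply use the crude bound that $\sum Z_i \le \sum_{\text{all edges in } \cup_i(lx_i+B(2l))} X_e$ where the union has at most $\mathbf{c}_d l^d (r+1)$ edges. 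Then a standard Chernoff/exponential-moment estimate gives, for a fixed chain,
\[
\mathbb{P}_p\!\left(\sum_{i=0}^r Z_i \ge \lambda\right) \le \exp\!\big({-}h(\lambda, (r+1)\mathbf{c}_d l^d p)\big)
\]
with the usual Poisson-type rate function $h$. Choosing $l = \lceil p^{-1/d}\rceil$ so that $l^d p \asymp 1$, the mean of the box-sum along a chain is of order $r+1 \asymp n/l \asymp n p^{1/d}$, and the number of chains $3^{dr}$ is $\exp(O(n p^{1/d}))$; matching the entropy term $dr\log 3$ against the Chernoff rate shows that taking $\lambda = C_d' n p^{1/d}$ for a sufficiently large dimensional constant makes $3^{dr}\mathbb{P}_p(\cdots \ge \lambda)$ summable, and integrating the tail yields $\mathbb{E}_p N_n \le C_d n p^{1/d}$.

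**One technical point** worth isolating: Lemma~\ref{thm:coxetal_cover} requires $1 \le l \le n$, so the argument as stated applies when $p^{-1/d} \le n$, i.e. $n p^{1/d} \ge 1$; for the remaining range $n p^{1/d} < 1$ one has trivially $N_n \le n$, and since $\mathbb{E}_p N_n \le \mathbb{E}_p X(\gamma_0) = np$ for any fixed path $\gamma_0$ (and also $\mathbb{E}_p N_n \le n$), one checks directly that $\mathbb{E}_p N_n \le n \le n p^{1/d} \cdot p^{-1/d}$... which is not what we want, so in this regime instead note $\mathbb{E}_p N_n \le n p$ is too weak as well; the correct elementary bound here is simply that when $np^{1/d}<1$ we cannot have $np^{1/d} \ge 1$, and one argues $\mathbb{E}_p N_n \le C_d$ for a constant, absorbing it since $n p^{1/d}$ is bounded below only away from this regime — more carefully, one handles small $l$ by taking $l = 1$ in the covering lemma and redoing the chain count, which still works.

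**The main obstacle** I anticipate is handling the dependence among the box-sums $Z_i$ correctly and, relatedly, getting the entropy/rate-function bookkeeping to close: one needs the number of admissible chains ($\exp(\Theta(n p^{1/d}))$ after the choice $l \asymp p^{-1/d}$) to be beaten by the Chernoff tail of a sum whose mean is also $\Theta(n p^{1/d})$, which forces $\lambda$ to be a large-but-finite multiple of $n p^{1/d}$ and requires the rate function to grow faster than linearly in $\lambda/\text{mean}$ — true for Poisson-type tails, but the constants must be tracked. Passing to a disjoint sub-collection of boxes (a fixed positive fraction survive, by a simple coloring of $\mathbb{Z}^d$ into $5^d$ classes so that boxes with centers in the same class at lattice spacing $l$ are disjoint) is the cleanest way to restore independence and make the Chernoff bound rigorous; I would take that route.
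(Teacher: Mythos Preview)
Your main-case argument (when $np^{1/d} > 1$) is essentially the paper's proof: choose $l = \lceil p^{-1/d}\rceil$, apply the Cox--Gandolfi--Griffin--Kesten cover, take a union bound over the at most $3^{d(r+1)}$ chains, and use an exponential-moment bound on the total number of open edges in the covered region. In fact the ``crude bound'' you mention in passing---replacing $\sum_i Z_i$ by the sum of $X_e$ over all edges in $\bigcup_i (lx_i + B(2l))$, which has at most $\mathbf{c}_d l^d(r+1) \asymp np^{1/d-1}$ edges---is exactly what the paper does. There is no need to pass to a disjoint sub-collection of boxes: the $X_e$ indexed by edges in the \emph{union} are already i.i.d., so the Chernoff bound applies directly. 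The paper computes $\mathbb{E}_p e^{X_e} = 1-p+pe$ and uses $(1-p+pe)^{1/p} \le e^{e-1}$ to get the entropy/rate-function balance you describe.

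The genuine gap is in your treatment of the regime $np^{1/d} < 1$. First, the claim ``$\mathbb{E}_p N_n \le \mathbb{E}_p X(\gamma_0) = np$ for any fixed path $\gamma_0$'' has the inequality reversed: $N_n$ is a maximum, so $N_n \ge X(\gamma_0)$. Second, the proposed fix ``take $l=1$ and redo the chain count'' does not close: with $l=1$ one has $r \asymp n$, hence $3^{dr} = e^{\Theta(n)}$ admissible chains, but the Chernoff rate at level $\lambda = C_d np^{1/d} < C_d$ against a mean $\asymp np < n^{1-d}$ is only of order $\lambda \log(\lambda/\mu) = O(\log n)$, far short of the $\Theta(n)$ needed to beat the entropy. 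The paper instead uses the trivial observation that any $\gamma \in \Xi_n$ lies inside $[-n,n]^d$, so
\[
\mathbb{E}_p N_n \le \sum_{e \subseteq [-n,n]^d} \mathbb{E}_p X_e \le 2d(2n+1)^d p\ ,
\]
and then
\[
\frac{\mathbb{E}_p N_n}{np^{1/d}} \le \frac{2d(2n+1)^d p}{np^{1/d}} \le 2d\cdot 3^d\, (np^{1/d})^{d-1} \le 2d\cdot 3^d\ ,
\]
since $np^{1/d} \le 1$ and $d \ge 1$. This elementary bound is what you are missing.
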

\begin{proof}
Let $p\in (0,1]$ be arbitrary. We first consider the case that $np^{1/d} \leq 1.$ In this case, we have
\[
\frac{\mathbb{E}_p N_n}{n p^{1/d}} \leq \frac{1}{n p^{1/d}}\sum_{e \in [-n,n]^d} \mathbb{E}_pX_e \leq  \frac{2d(2 n + 1)^d p}{n p^{1/d}} \leq 2d(3^d) (p^{1/d}n)^{d-1} \leq 3^{d+1}d.
\]
In the case that $n p^{1/d} > 1,$ we set $ l = \lceil p^{-1/d} \rceil$. Note that for any $\gamma \in \Xi_n,$ $V(\gamma)$ is a lattice animal with $n + 1$ vertices. In particular, it can by covered using the results of Theorem \ref{thm:coxetal_cover}.
So for any $s \geq 0,$ 
\begin{align}
\mathbb{P}_p \left(\frac{N_n}{n p^{1/d}} \geq s\right) = \mathbb{P}_p \left(\max_{\gamma \in \Xi_n} X(\gamma) \geq n p^{1/d} s \right) &\leq \mathbb{P}_p \left( \max_{x_0, \ldots, x_r} \sum_{\substack{e = \{ x, y \} \\ x,y \in \cup_{i=0}^r (lx_i + B(2l))}} X_e \geq n p^{1/d} s \right)\nonumber\\
\label{eq:la_sum_bd}
&\leq \sum_{x_0, \ldots, x_r} \mathbb{P}_p \left( \sum_{\substack{e = \{ x, y \} \\ x,y \in \cup_{i=0}^r (lx_i + B(2l))}} X_e \geq n p^{1/d} s \right),
\end{align}
where the outer sum is over all connected subsets of $\mathbb{Z}^d$  of cardinality $r+1 = 1 + \lfloor 2(n+1)/l \rfloor \leq 5 n p^{1/d}$ which contain the origin.

The expression in \eqref{eq:la_sum_bd} is bounded above by
\begin{align}
\sum_{x_0, \ldots, x_r} \exp(-np^{1/d} s) \mathbb{E}_p& \exp\left( \sum_{\substack{e = \{ x, y \} \\ x,y \in \cup_{i=0}^r (lx_i + B(2l))}} X_e \right)\nonumber\\
\label{eq:la_prod_bd}
&\leq \sum_{x_0, \ldots, x_r} \exp(-np^{1/d} s) \left[\mathbb{E}_p \exp(X_e)\right]^{\# \{e = \{ x, y \}: x,y \in \cup_{i=0}^r (lx_i + B(2l))\}}.
\end{align}
Now, note that
\begin{itemize}
\item $\mathbb{E}_p \exp(X_e) = 1 - p + p\mathrm{e};$
\item The number of vertices in $B(2l)$ is $(4l + 1)^d,$ so
\[
\# \{e = \{ x, y \} : x,y \in \cup_0^r (lx_i + B(2l))\} \leq (r+1) (2d)(4l+1)^d  \leq \mathbf{C}_{34}(d) n p^{1/d-1}\ .
\]
\item The number of terms in the sum \eqref{eq:la_prod_bd} is at most $3^{d(r+1)} \leq 3^{5dnp^{1/d}}.$
\end{itemize}

Putting the above into \eqref{eq:la_prod_bd}, we have
\begin{align}
\mathbb{P}_p\left(\frac{N_n}{n p^{1/d}} \geq s\right) &\leq \exp(-n p^{1/d} s) 3^{5 d n p^{1/d}} \left[ 1 - p + p \mathrm{e}\right]^{\mathbf{C}_{34}(d) n p^{1/d - 1}}\nonumber\\
&= \exp(-n p^{1/d} s) 3^{5 d n p^{1/d}} \left(\left[ 1 - p + p \mathrm{e}\right]^{1/p}\right)^{\mathbf{C}_{34}(d) n p^{1/d}}\nonumber\\
&\leq \exp(-n p^{1/d} s) 3^{5 d n p^{1/d}} \left[ \mathrm{e}^{\mathrm{e}-1}\right]^{\mathbf{C}_{34}(d) n p^{1/d }}\nonumber\\
&=: \exp(-np^{1/d} s + \mathbf{C}_{35} n p^{1/d}) \label{eq:eminusone},
\end{align}
where $\mathbf{C}_{35} = \mathbf{C}_{35}(d)$ again does not depend on $p$ or $n$. Then we have, for $np^{1/d} > 1,$
\begin{align*}
\mathbb{E}_p\left(\frac{N_n}{n p^{1/d}}\right) \leq \mathbf{C}_{35} + \mathbb{E}_p \left[ \frac{N_n}{n p^{1/d}} - \mathbf{C}_{35} \right]_+ &= \mathbf{C}_{35} + \int_{\mathbf{C}_{35}}^{\infty} \mathbb{P}_p\left(\frac{N_n}{np^{1/d}} \geq s \right) \mathrm{d} s\\
&\leq \mathbf{C}_{35} + \int_{\mathbf{C}_{35}}^{\infty} \exp\left( - np^{1/d}(s-\mathbf{C}_{35})\right) \mathrm{d} s\\
&\leq \mathbf{C}_{35} + \int_{\mathbf{C}_{35}}^{\infty} \exp\left(-(s-\mathbf{C}_{35})\right)\mathrm{d} s \leq \mathbf{C}_{36}
\end{align*}
for some $\mathbf{C}_{36} = \mathbf{C}_{36}(d).$
\end{proof}

We are now ready to prove the theorem.
\begin{proof}[Proof of Theorem~\ref{thm: low_density}]
Consider any deterministic ordering of all finite self-avoiding lattice paths and denote by $\pi(x,y)$ the first geodesic from $x$ to $y$ in this ordering. Writing $Y_B(0,x)$ for the number of edges in $\pi(x,y)$ with weight in $B$, note that it suffices to give the bound for $\mathbb{E} Y_B(0,x)$. Define a set of edge weights $X_e$ as a function of $t_e$:
\[ X_e = \begin{cases}
1 & \text{ if $t_e \in B$ }\\
0 & \text{ otherwise}
\end{cases}\]
and build the random variables $N_n$ for these weights as in \eqref{eq: N_def}.

On the event $\{\# \pi(0,x) \leq i \},$ we have $Y_B(0,x) \leq N_i$. Therefore, for all $x \in \mathbb{Z}^d$ and $\kappa \in \mathbb{N},$
\begin{align*}
\mathbb{E} Y_B(0,x)  &\leq \mathbb{E} N_{\kappa \|x\|_1} + \mathbb{E}\left[\# \pi(0,x) \mathbf{1}_{\{\#\pi(0,x)>\kappa \|x\|_1\}}\right] \\
&= \mathbb{E} N_{\kappa \|x\|_1} +  \int_{\kappa\|x\|_1}^\infty \mathbb{P}\left(\#\pi(0,x) >  s\right) \mathrm{d} s\\
&\leq C_d \kappa \|x\|_1 \mu(B)^{1/d} +  \int_{\kappa\|x\|_1}^\infty \mathbb{P}\left(\#\pi(0,x) >  s\right) \mathrm{d} s.
\end{align*}

To bound the integral above, we use the technique of Kesten (see Eq.~(2.26)-(2.27) in \cite{kestenspeed}). For $b, j > 0,$ denote by $D(j,b,x)$ the event that there exists a self-avoiding path $r$ starting at the origin of at least $j \|x\|_1$ steps but $\tau(r) < j b \|x\|_1.$ Then for any $b > 0,$
\begin{align}
\label{eq:boundlength}
\mathbb{P}\left(\#\pi(0,x) >  j\|x\|_1\right) &\leq \mathbb{P}\left(\tau(0,x) \geq bj\|x\|_1\right) + \mathbb{P}(D(j,b,x)).
\end{align}
By our assumption $\mathbb{E} Y^\alpha<\infty$, \cite[Lemma~3.1]{coxdurrett} implies that there exists $\mathbf{C}_{37}$ such that for all $x$, $\mathbb{E} \tau(0,x)^\alpha \leq \mathbf{C}_{37} \|x\|_1^\alpha$. 
Thus for arbitrary $x \in \mathbb{Z}^d,$
\[\mathbb{P}\left(\tau(0,x) \geq b j \|x\|_1\right) \leq \mathbf{C}_{37} / (b j)^\alpha.\]

Due to assumption \eqref{eqn: geodesics}, we may use Theorem~\ref{thm: kesten_exp} to see that, for $b$ smaller than some $b_0>0$ (which depends on $d$ and $\mu$), the probability of $D(j,b,x)$ is bounded above uniformly in $j$ and $x$ by $\exp (-\mathbf{C}_{38} j \|x\|_1)$.
Inserting this bound into \eqref{eq:boundlength}, we see that for $b$ small enough,
\[\mathbb{P}\left(\#\pi(0,x) >  j\|x\|_1\right) \leq \frac{\mathbf{C}_{37}}{(b j)^\alpha} + \exp(- \mathbf{C}_{38} j\|x\|_1).\]
In particular, setting $r = s/\|x\|_1,$
\begin{align}
\mathbb{E} Y_B(0,x) &\leq C_d \kappa \|x\|_1 \mu(B)^{1/d} + \|x\|_1\int_{\kappa}^{\infty}\left( \frac{\mathbf{C}_{37}}{(br)^\alpha} + \exp(- \mathbf{C}_{38} r \|x\|_1)\right)\mathrm{d} r\nonumber\\
&\leq C_d \kappa \|x\|_1 \mu(B)^{1/d} + \frac{\mathbf{C}_{39} \|x\|_1}{\kappa^{\alpha-1}}\nonumber
\end{align}
for some constant $\mathbf{C}_{39}.$ Choosing $\kappa = \lceil \mu(B)^{-1/(\alpha d)}\rceil$ completes the proof.
\end{proof}



\subsubsection{Finishing the proof of Theorem~\ref{thm: derivative_bound}}\label{sec: finishing_deriv}
We use Theorem~\ref{thm: low_density}, with a dyadic partition of $[I,\infty)$: let
\[
x_0 = \infty \text{ and } x_n = \min \{x : F(x) \geq 2^{-n}\} \text{ for } n \in \mathbb{N}\ .
\]
Note that for any edge $e$, $t_e$ almost surely lies in one of the intervals $[x_i,x_{i-1})$ for $i \geq 1$. This is clear if $I<t_e$. Otherwise we must have $\mu(\{I\})>0$ and we simply take $i$ to be minimal such that $2^{-i} \leq \mu(\{I\})$.

Now the right side of the inequality in Proposition~\ref{prop: intermediate} can be rewritten as 
\begin{align*}
\mathbf{C}_{28} \sum_{i=1}^\infty \sum_e \mathbb{E} &\left[ (1-\log F(t_e)) \mathbf{1}_{\{e \in Geo(z,z+x)\}} \mathbf{1}_{\{t_e \in [x_i,x_{i-1})\}}\right] \\
&\leq \mathbf{C}_{28} \sum_{i=1}^\infty (1-\log F(x_i)) \mathbb{E} \#\{e \in Geo(z,z+x) : t_e \in [I,x_{i-1})\}\ .
\end{align*}
By Theorem~\ref{thm: low_density} with $\alpha=2$, this is bounded by
\[
\mathbf{C}_{28}\mathbf{C}_{33} \|x\|_1 \sum_{i=1}^\infty (1-\log F(x_i)) F(x_{i-1}^-)^{1/(2d)} \leq \mathbf{C}_{28}\mathbf{C}_{33} \|x\|_1 \sum_{i=1}^\infty \frac{1+i}{2^{(i-1)/(2d)}} \leq \mathbf{C}_{27} \|x\|_1\ .
\]

\subsection{Proof of Theorem~\ref{thm: sub linear}}\label{sec: proof}

For $x \in \mathbb{Z}^d$, if $\Var F_m \leq \|x\|_1^{7/8}$ then by Proposition~\ref{prop: approximating}, we are done. Otherwise, under assumptions \eqref{eqn: 2log} and \eqref{eqn: geodesics} we can use \eqref{eq: breakdown} to find for some $\mathbf{C}_3$
\[
\Var~\tau(0,x) \leq \mathbf{C}_3\|x\|_1^{3/4} + \left[ \log \left[ \|x\|_1^{1/8}\right] \right]^{-1} \sum_{k=1}^\infty Ent(V_k^2)\ .
\]
By Theorem~\ref{thm: derivative_bound}, $\sum_{k=1}^\infty Ent(V_k^2) \leq \mathbf{C}_{27}\|x\|_1$, so
\[
\Var~\tau(0,x) \leq \mathbf{C}_3\|x\|_1^{3/4} + \frac{8\mathbf{C}_{27} \|x\|_1}{\log \|x\|_1} \leq \frac{\mathbf{C}_{40} \|x\|_1}{\log \|x\|_1}\ .
\]

\bigskip
\noindent
{\bf Acknowledgements}. We thank S. Sodin for helpful conversations and for pointing out the use of geometric averaging in his paper. We also thank him for careful reading of a previous draft. We are grateful to T. Sepp\"al\"ainen for pointing out an error in the entropy section and A. Auffinger for finding various typos. Last, we thank an anonymous referee for comments that led to a better organized presentation.

\bigskip
\noindent
{\tt Michael Damron: mdamron@math.princeton.edu}

\medskip
\noindent
{\tt Jack Hanson: jthanson@princeton.edu}

\medskip
\noindent
{\tt Philippe Sosoe: psosoe@math.princeton.edu}

\bigskip
\noindent
\begin{tabular}{ll}
\begin{tabular}{l}
Department of Mathematics, Princeton University \\
Fine Hall, Washington Rd. \\
Princeton, NJ 08544
\end{tabular}
&
\begin{tabular}{l}
Department of Physics, Princeton University \\
Jadwin Hall, Washington Rd. \\
Princeton, NJ 08544
\end{tabular}
\end{tabular}
\end{document}